\theoremstyle{plain}
\newtheorem{thm}{\protect\theoremname}[section]
 \newcommand\thmsname{\protect\theoremname}
 \newcommand\nm@thmtype{theorem}
 \theoremstyle{plain}
  \theoremstyle{remark}
  \newtheorem{rem}[thm]{\protect\remarkname}
  \theoremstyle{definition}
  \newtheorem*{example*}{\protect\examplename}
  \theoremstyle{definition}
  \newtheorem{example}[thm]{\protect\examplename}
  \theoremstyle{plain}
  \newtheorem{lem}[thm]{\protect\lemmaname}
  \theoremstyle{plain}
  \newtheorem{prop}[thm]{\protect\propositionname}
  \theoremstyle{plain}
  \theoremstyle{definition}
  \newtheorem{my@rem}[thm]{Remark}
  \renewenvironment{rem}{\begin{my@rem}}{\end{my@rem}}
  \providecommand{\examplename}{Example}
  \providecommand{\lemmaname}{Lemma}
  \providecommand{\propositionname}{Proposition}
  \providecommand{\remarkname}{Remark}
  \providecommand{\theoremname}{Theorem}
\providecommand{\theoremname}{Theorem}
 \providecommand{\corollaryname}{Corollary}
\def\N{{\Bbb N}}
\def\Q{{\Bbb Q}}
\def\R{{\Bbb R}}
\def\L{{\mathcal L}}
\def\E{{\mathcal E}}
\def\Z{{\Bbb Z}}
\def\G{{\Bbb G}}
\def\T{{\mathcal T}}
\def\Aa{{\mathcal A}}
\def\P{{\Bbb P}}
\def\C{{\Bbb C}}
\def\u{{\bf u}}
\def\v{{\bf v}}
\def\d{{\rm d}}
\def\CVD{{\hfill\hfil{\lower 2pt\hbox{\vrule\vbox to 7pt
{\hrule width  5pt\varphifill\hrule}\varphirule}}}\par}
\begin{document} 

\title[Finiteness Theorems  on Elliptical Billiards]{Finiteness Theorems  on Elliptical Billiards and a Variant of the Dynamical Mordell-Lang conjecture}

\author[Pietro Corvaja and Umberto  Zannier, with an appendix with J. Demeio]{ Pietro Corvaja and Umberto  Zannier \\ with an appendix by P. Corvaja, U. Zannier and J. Demeio}
\maketitle




\date{\today}

{\tt Abstract.}  {\it We offer  some theorems, mainly of  finiteness, for certain patterns in elliptical billiards, related to periodic trajectories. For instance, if two players hit a ball  at a given  position and with directions forming a fixed angle in $(0,\pi)$, there are only finitely many cases for both trajectories being periodic. Another instance is the finiteness of the billiard shots which send a given ball into another one so that this falls eventually in a hole. These results have their origin in 
 `relative' cases of the Manin-Mumford conjecture, and constitute instances of how arithmetical content  
 may affect chaotic behaviour (in billiards). We shall also 
  interpret  the statements    through  a   variant  of the dynamical Mordell-Lang conjecture. In turn, this variant embraces    cases, 
which, somewhat surprisingly,  sometimes  can be treated (only) by completely different methods compared to the former ones; here we shall offer an explicit example related to diophantine equations in algebraic tori.}


\section{Introduction} 

The study of billiards is of course a classical mathematical topic, still of wide current  interest.   Borrowing from Ya. Sinai's ICM  survey paper \cite{Sin}, {\it ``Billiards are dynamical systems which correspond to the uniform motion of a point inside a domain on a Riemmanian manifold with elastic reflections on the boundary. [...] The theory of billiards suggests many beautiful problems.}  
And, to quote  from the preface of S. Tabachnikov's book \cite{T} (who in turn refers partly to A. Katok):   {\it  ``Billiards is not a
single mathematical theory; [...]  it is rather a mathematician's playground where various methods and approaches are
tested and honed.''} 

This viewpoint particularly  fits with (the aim of) the present paper: we authors never worked specifically on billiards, but rather  we realized 
 that certain (fairly recent)  theorems of arithmetico-geometrical nature may be   applied so to yield, after suitable  work, natural conclusions in the realm of {\it elliptical billiards}. More precisely, these results have implications regarding periodical orbits and finiteness of  billiard trajectories with certain (simple and seemingly natural) patterns or properties which we shall  introduce. Naturally the study of elliptical billiards goes back to long ago \footnote{P. Sarnak pointed out to us the reference \cite{B} by  the Jesuit priest R. Boscovich, an astronomer and scientist, going back to the XVIII Centrury.} and is present also in recent literature (see e.g. the book \cite{T}, the paper \cite{ConnesZagier} by A. Connes and D. Zagier, and the paper \cite{RGK} by D. Reznik, R. Garcia, J. Koiller); however apparently no situation similar to what appears in this paper has been already analysed. 

The  results that we intend to apply for the main purposes of this paper have their origin in the celebrated {\it Manin-Mumford conjecture} (a theorem of M. Raynaud since the 80s), which predicted finiteness of torsion points in a curve of genus $\ge 2$ embedded in its Jacobian. This was later extended in the realm of abelian schemes, with statements that are part of the  so-called {\it Pink-Zilber conjecture(s)}. We shall say more on this below, a fairly  extended account (of the basics)   being  given in the second author's book \cite{Z3}.  

\medskip

The link with billiards arises because  a billiard shot in an elliptical billiard  corresponds to a point on an  elliptic curve which is a member of an elliptic scheme denoted $\L$; periodicity  corresponds to this point being  torsion.  This correspondence too goes back to long ago, essentially to Jacobi's proof of a famous theorem of Poncelet; see also the book \cite{DR} by V. Dragovic and M. Radnovic for a detailed study of this, also in a more general context of hyperelliptic Jacobians, and see the papers \cite{BM} by W. Barth and J. Michel, and \cite{Jak} by B. Jakob. We especially point out the book by J. Duistermaat \cite{Du}, which (see Ch. 11) treats in detail some issues of elliptic surfaces with reference  to billiards. But to our knowledge no systematic application of  this correspondence, similar to what we propose,     has been  yet developed or   pointed out so far  in the literature. Also, we shall develop several details and equations for the basic setting which seem not to appear in full  in the existing literature, and  this study will involve the investigation of relevant (elliptic) algebraic surfaces and corresponding results in their realm which appear to be new.   (The  interesting book \cite{Du} contains some results and formulae which we develop independently in the Appendix, however  the   viewpoint  of \cite{Du} is somewhat different  and  no arithmetical information is introduced.) 

\medskip

{\tt Some goals of the paper}. In the first place we shall use the said description mainly for  the mentioned  {\it finiteness}  theorems, related to periodic orbits in elliptical billiards, in which certain simultaneous conditions are required: see Theorems \ref{T.angolo}, \ref{T.buca}, \ref{T.ritorno}.  

But we shall also obtain other information about periodic orbits, as for instance asymptotic  results on the distribution of  billiard trajectories starting at  a given point and having given period $n$; the relevant constants will turn out to be expressed explicitly in terms of certain elliptic integrals on the Legendre curve.  (See Thm. \ref{T.esiste} below. In the complex case (treated in the Appendix because it is more distant from the main theme) we shall point out  an arithmetical meaning of the relevant constant in the asymptotic.) 

Moreover, in \S \ref{S.formula} we shall use the elliptic picture to  give a very simple explanation of a striking formula appearing in \cite{RGK} (see formula (8)) which expresses the sum of cosines of angles between consecutive segments in a periodic trajectory; this is an instance of a Birkoff sum in the theory of dynamical systems. We shall also `characterise' all such formulae, and prove an extension of the formula  for trajectories which are not necessarily periodic.

\medskip

{\tt Further links and results}. 
Several instances of connections between billiards and arithmetical or geometrical questions are of course already known, but we wonder whether the present study may suggest new ones, as for instance  finiteness   issues similar to those mentioned above,  possibly on other types of  billiards (as in Remark \ref{R.altri} below), so as  to be still meaningful and maybe   raise  sensible questions of diophantine type. 

\bigskip

The terminology  {\it elliptical} for  the billiards  we shall deal with,  refers to the shape of the billiard table. (See  e.g. \cite{T}, Ch. 4.) \footnote{This  has  not to be confounded with {\it hyperbolic billiards}, for which we refer to  
\cite{Sin}.}  It seems worth mentioning that several   remarkable investigations in the topic of billiards regard especially  {\it polygonal billiards}, which indeed occur among the most natural first examples (see e.g. \cite{T}, Ch. 7, for generalities). These are related   in particular  to the geometry of suitable curves of genus at least $2$ and their Jacobians  (see e.g.  W. Veech's paper \cite{V} and C. McMullen's papers \cite{MC}, \cite{MC2}, and references therein).  While we do not see any definite direct connection of our billiards with the polygonal ones, it is puzzling that both our context and the latter have links with  torsion divisors in Jacobians and the {\it Pell-Abel equation}\footnote{We follow J-P. Serre's S\'eminaire Bourbaki paper \cite{Sb} for this terminology.}, i.e. a polynomial version of the  famous Pell equation in Number Theory.

\begin{small}
See \cite{MC2}  for the relation of this equation with polygonal billiards, and see the second author's paper \cite{Z2} for a survey of this topic and    links with some results at the basis of the present paper. A further connection comes from the fact that  polygonal billiards are related to real multiplication in certain Jacobians (see \cite{MC}), which occurs also concerning  the {\it Theorem of Poncelet} (recalled below) which is at the basis of the billiard-map law in elliptical billiards: see J. Wilson's paper \cite{W} for proofs of  Poncelet's theorem coming from real multiplication in Jacobians. We have presently no general direct explanations for these links but we hope this will be explored in some detail in the future. 
\end{small}

\medskip

While giving the proofs, we shall see that  some of our results on elliptical billiards 
may be naturally  framed within the following  general issue:

\medskip

\noindent{\bf Question}.  {\it  Given an algebraic  surface  with a finitely generated commutative semigroup $\Gamma$ of (rational) endomorphisms,  and given three curves on the surface, what can be said assuming the existence of infinitely many  $\Gamma$-orbits intersecting each of the curves ?}
 
 \medskip
 
 This may be formulated  for varieties of arbitrary dimension and reminds of the  {\it dynamical Mordell-Lang conjecture}, studied by various authors and still open in its most general case (see for instance the book \cite{BGT} by J. Bell, D. Ghioca, and T. Tucker). Indeed  such  conjecture overlaps with our issue: 
in our setting, related to Theorem \ref{T.buca}, the surface will be an elliptic surface (a surface fibred in elliptic curves) and $\Gamma$ will be generated by a single automorphism, corresponding to the translation by a section of the elliptic fibration. 

This can also be phrased as a case of the Question where the surface is $\P_2$ and the endomorphism is a certain Cremona transformation. Instead, taking the surface again to be $\P_2$, and considering a linear automorphism, and taking for the curves the simplest one, i.e. lines, we obtain 
another instance of this Question, not directly related to billiards. In the last part of the paper we shall treat this by proving another finiteness theorem,    (see Theorem \ref{T.P_2}), answering another case of the  Question.

We note that  this time the arguments shall rely on completely different ingredients compared to the billiard case.  This double nature of methods for instances of  the same context seems rather peculiar to us.  In the present case the difference seems to come from the nature of the endopmorphism. 

We also wonder about a possible  general statement containing the dynamical Mordell-Lang conjecture and this Question as well \footnote{Somewhat similarly to how the  Pink-Zilber conjectures contain the Mordell-Lang's, see in this direction the paper \cite{GN} by Ghioca and D. Nguyen). For instance, it is possible to see Falting's finiteness theorem for curves of genus $2$ as an instance of our Question.},  linking the whole context to billiards.

\medskip

\subsection{Main statements} 
Let us now go to  state  the main results of this paper. Our conclusions will be just samples of what can be proved with the same methods, and there are many possible variations or generalisations, which we leave to the interested readers. 

\medskip

{\tt Our setting}.  Let $C$ be an ellipse in the real plane, which together with its interior, denoted  $\T^o$, will constitute  our billiard table, denoted $\T=C\cup \T^o$.  The ``balls'' will be points in  $\T$  and a billiard shot to a ball $p$ will correspond to the motion of $p$ along a   half-line until it meets the boundary $C$, in which case it will be reflected according to the usual principle\footnote{Namely, the old and new directions will form equal angles with the tangent to $C$ at the boundary point.}, and so on.  We suppose that there is no friction  so that the ball will continue to move indefinitely, and the speed will be immaterial for us. 

We recall that the {\it phase space}  (see \cite{T}, Ch. 3) is the set of pairs  $(p,v)$ where $p\in C$ and $v\in S_1$ is a unit vector such such that $p+\epsilon v$ is   inside $\T$   for small $\epsilon>0$.  So $v$ varies in a semi-circle depending on $p\in C$. (During the proofs we shall see how  in our context the phase space gives rise to elliptic algebraic surfaces of various types.)   In general by a {\it billiard shot} we shall mean a pair $(p,v)$ either in the phase space  or such that   $p\in\T^o$, where  $v$  is a  nonzero vector, often assumed of unit length,  representing  the direction along  which $p$ is sent. (So in case $p\in\T^o$ the vector $v$ varies in $S_1$.)  This shot generates a  {\it billard trajectory}, consisting of the shot together with the further segments which arise by reflection.

\medskip

\begin{small}
\begin{rem} \label{R.real} {\tt Real and complex solutions}. We remark at once that the said   motion of the balls in the billiard  will be described by {\it real} solutions to certain (algebraic) equations. Now, these  equations  will make sense even  considering their {\it complex} solutions.  Of course in general these new solutions will not correspond to an actual  billiard trajectory. Nevertheless many result will maintain their validity even in the extended realm of solutions. We shall usually omit this comment in the sequel unless the situation is special.  For instance in the Appendix we shall treat explicitly both cases, and we shall see that certain constants arising from the counting of periodic trajectories have quite a different meaning in the two cases (see also Thm. \ref{T.esiste} below and the comments which follow). 
\end{rem}
\end{small}

\medskip

Most of our conclusions will assert {\it finiteness} of certain patterns. But let us start now
with an easier  result in the opposite direction. Since we shall supplement this result with other ones which are a bit more distant from the main topic, this will be proved in the Appendix written also with J. Demeio.

\begin{thm} \label{T.esiste}  Let $p_1,p_2\in\T$, not both foci. Then for each integer $n>0$ there exists a billiard trajectory from $p_1$ to $p_2$ with exactly $n$ bounces.  For odd (resp. even) integer $n>0$ the number of periodic trajectories from $p_1$,  of period $n$, equals $c_{o}\cdot n+O(1)$ (resp. $c_{e}\cdot n+O(1)$) where $c_{o},c_{e}$ are positive numbers (depending on $p_1$)  which may be `explicitly' expressed in terms of elliptic integrals on the Legendre curve (as indicated in the Appendix).
\end{thm}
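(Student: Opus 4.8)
The plan is to run everything through the classical ``Poncelet'' correspondence between billiard segments and an elliptic fibration, which is set up in detail in the body of the paper and in the Appendix. Write $C$ as a member of the pencil of confocal conics; each segment of a trajectory, unless it is a focal chord, is tangent to a unique confocal conic $\Gamma_\lambda$, its caustic, and the reflection law keeps this caustic constant along the trajectory. For fixed $\lambda$ the flags $(p,\ell)$, with $p\in C$ and $\ell$ a chord of $C$ through $p$ tangent to $\Gamma_\lambda$, form a double cover of the conic $C$ branched over the four points of $C\cap\Gamma_\lambda$, i.e. (after the usual normalisation) the real points of a member $E_\lambda$ of the Legendre elliptic scheme; the billiard map becomes translation by a section $T_\lambda$ of this scheme, being the composition of the two natural involutions of $E_\lambda$ (the one over $C$ and the one over $\Gamma_\lambda$). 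Hence a trajectory with caustic $\Gamma_\lambda$ closes after $n$ steps precisely when $nT_\lambda=0$ --- this is Poncelet's closure theorem --- and the rotation number $\rho(\lambda)$ is the ratio of the elliptic logarithm of $T_\lambda$ to a real period, hence a quotient of incomplete and complete elliptic integrals on the Legendre curve. The first thing I would establish carefully is that $\rho$ is real-analytic and strictly monotone in $\lambda$ (the elliptic billiard map is a monotone twist map): over the range of elliptic caustics it runs bijectively over $(0,\tfrac12)$, while over the range of hyperbolic caustics the real locus $E_\lambda(\R)$ has two components which are interchanged by $T_\lambda$, and the relevant rotation number (that of $2T_\lambda$ on one component) again runs monotonically over an interval.

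For the first assertion I would argue variationally, which covers all $n$ uniformly. Fix $n$ and maximise $L(q_1,\dots,q_n)=|p_1q_1|+\sum_{i=1}^{n-1}|q_iq_{i+1}|+|q_np_2|$ over the compact set $C^n$. At a maximiser the derivative along $C$ at each $q_i$ vanishes, which is exactly the statement that the bisector of the two adjacent segments is normal to $C$ at $q_i$, i.e. the reflection law holds; since $p_1,p_2$ and all chords lie in the convex table $\T$, the maximiser is a genuine billiard trajectory, and the only possible degenerations (two consecutive $q_i$ coalescing, a spurious collinearity, or $q_i$ hitting $p_1$ or $p_2$) are ruled out at a strict maximum or by a short perturbation argument, so the trajectory has exactly $n$ bounces and $p_2$ lies on the last segment. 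The one configuration in which the maximiser is forced to degenerate --- to the major axis --- is when $\{p_1,p_2\}$ is the pair of foci, since there $|p_1q|+|qp_2|\equiv 2a$; this is exactly the excluded case, where the conclusion is in any event sensitive to the precise meaning of ``reaching $p_2$ after $n$ bounces''. (Alternatively, and in line with the Appendix's treatment of the complex case, one restricts to a caustic interval over which both $p_1$ and $p_2$ lie in the associated annular region and observes that, by strict monotonicity of $\rho$, the continuous phase-mismatch $\lambda\mapsto n\rho(\lambda)-\bigl(\theta_2(\lambda)-\theta_1(\lambda)\bigr)$ --- with $\theta_i(\lambda)$ the phase of a chord through $p_i$ tangent to $\Gamma_\lambda$ --- sweeps an interval of length exceeding $1$ once $n$ is large, hence meets $\Z$; the finitely many small $n$ are then checked directly, or by also sweeping the hyperbolic family.)

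For the asymptotic count, note that a periodic trajectory through $p_1$ of period $n$ has a caustic $\Gamma_\lambda$ with $nT_\lambda=0$, equivalently $\rho(\lambda)\in\tfrac1n\Z$, and conversely each such $\Gamma_\lambda$ carries, by the closure theorem, a one-parameter Poncelet family in which a trajectory passes through $p_1$ exactly when one of its segments does; the number of chords of $C$ through $p_1$ tangent to $\Gamma_\lambda$ and lying in $\T$ is a fixed integer $\gamma$ (a small constant depending only on whether $p_1\in C$ or $p_1\in\T^o$ and on the orientation convention) for all but finitely many of the caustics in play --- the exceptional ones being, essentially, the one or two confocal conics through $p_1$. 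By strict monotonicity of $\rho$, as $\Gamma_\lambda$ ranges over the caustics for which $p_1$ lies in the annular region the values $\rho(\lambda)$ fill an interval $J=J(p_1)$ whose endpoints are $\rho$ evaluated at the confocal conic through $p_1$ and at the limiting (focal-segment) caustic, hence are explicit elliptic integrals on the Legendre curve; the number of admissible $\lambda$ with $\rho(\lambda)\in\tfrac1n\Z$ is therefore $n|J|+O(1)$, and multiplying by $\gamma$ gives $c\cdot n+O(1)$. The odd/even dichotomy is then forced by the two-component structure above: an odd multiple of $T_\lambda$ lies in the non-identity component of $E_\lambda(\R)$ and so can never vanish, i.e. hyperbolic caustics support only even-period trajectories (geometrically, their bounce points alternate between two disjoint arcs of $C$), whereas elliptic caustics support all periods. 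Hence for odd $n$ only the elliptic family contributes, giving $c_o$, and for even $n$ the hyperbolic family adds a further positive term, giving $c_e$, so that $c_e>c_o$ in general; when $p_1$ lies on the inter-focal segment there is no admissible elliptic caustic and $c_o$ degenerates to $0$, which is why ``positive'' should be read with $p_1$ off that segment (and off the foci).

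I expect the real work, and the main obstacle, to lie in keeping the error at $O(1)$ rather than $o(n)$: one must show that, away from finitely many exceptional caustics, $p_1$ is in sufficiently general position relative to the Poncelet family of $\Gamma_\lambda$ (not on the caustic, not at a forced self-intersection, and with both tangent chords from $p_1$ genuine chords of $C$), so that ``$\gamma$ trajectories through $p_1$'' holds uniformly; together with a clean identification of the endpoints of $J(p_1)$ as elliptic integrals this is the heart of the asymptotic. The supporting facts --- strict monotonicity and the exact range of $\rho$ on the two caustic families (a classical twist-map / elliptic-integral computation, carried out explicitly in the Appendix), and the handling by hand of the degenerate positions of $p_1$ (a focus, or a point of the inter-focal segment) and of the excluded pair of foci --- are more routine but still have to be done.
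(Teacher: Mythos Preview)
Your proposal is correct and follows essentially the same route as the paper's proof. The variational argument for existence (maximise the broken path length, reflection law at a maximiser via the Fermat--H\'eron principle) is exactly what the paper does, and your asymptotic via the rotation number $\rho(\lambda)$ is precisely the paper's Betti-map argument: the paper computes $\beta_2(\lambda)$ explicitly as a ratio of elliptic integrals, proves its strict monotonicity by a direct double-integral sign argument, and reads off the odd/even split from the fact that the first Betti coordinate is $\tfrac12$ on hyperbolic caustics (your two-component observation).

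One small organisational difference is worth noting. You parametrise by caustics and then multiply by a tangent-count $\gamma$, which forces you to worry about the finitely many caustics where $\gamma$ jumps (those through $p_1$) in order to secure the $O(1)$ error. The paper instead parametrises directly by the slope $\xi$ of the shot from $p_1$: the composite $\xi\mapsto s(\xi)\mapsto\beta_2$ is continuous and piecewise strictly monotone with a bounded number of pieces (the turning points being exactly where the caustic passes through $p_1$), so counting preimages of $\tfrac1n\Z$ gives $O(1)$ for free with no general-position bookkeeping. This is a cleaner packaging of the same idea and absorbs your anticipated ``real work'' into a single monotonicity statement; the explicit constants $c_o=2-4\beta_2(M/c^2)$ and $c_e=2(1-\beta_2(M/c^2)-\beta_2(m/c^2))$ drop out directly, with $M,m$ the extremal caustic parameters through $p_1$. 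Your observation that $c_o$ degenerates when $p_1$ lies on the inter-focal segment is correct and is a case the paper's explicit treatment (which assumes $b>0$) silently excludes.
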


At least for algebraic values of the parameters which express the ellipse and the point, the numbers $c_{o}, c_{e}$ will turn out to be ratios of  {\it periods}, in the sense of the paper \cite{KZ}  by M. Kontsevich and  D. Zagier. 

We shall also observe that, as $n$ grows the set of slopes of the first segment of such  trajectories is dense in the appropriate circle or semicircle, and tends to be  uniformly distributed with respect to a certain measure, made explicit in \S \ref{S.pre}. (This does not hold in the $p$-adic case, as shown in \cite{LZ}.

Some of these  results are maybe known to experts, but we have no reference.

The asymptotic becomes $cn^2+O(1)$ if one looks at complex points, where now  the constant is related to the (functional) height of the relevant section (see the paper \cite{CDMZ} of J. Demeio, Masser and the authors).  
Our proofs, which work for  sections more general than the billiard ones, will rely on   the so-called {\it Betti map (of a section)} (in its simplest version), introduced implicitly by Ju. Manin and studied recently by various authors, e.g.  in the papers \cite{CMZ} and \cite{ACZ}  of the authors resp. with Masser and  Y. Andr\'e   (with an appendix of Z. Gao).  
It turns out that the elliptical billiard delivers a {\it section} on an associated elliptic scheme, which may be taken as the Legendre scheme after a suitable quadratic base change. We shall analyse this scheme from several viewpoints. In the Appendix we shall show that analogous distributional results hold for general sections, i.e. not associated to billiards.

\begin{small}  \begin{rem}\label{R.realicommenti}  

(i) {\tt Algebraic points, heights}.  When everything is defined over the field $\overline\Q$ of algebraic numbers, one derives further information, as for instance  that the relevant slopes, though dense, are {\it sparse}. Indeed, it follows from results of J. Silverman and J. Tate (1980s) that such slopes are algebraic numbers of bounded (Weil) height (in particular this implies that there are only finitely many ones of bounded degree over $\Q$). See \cite{Z3}, especially Appendix C by Masser. 
 In other words, for given points $p_1,p_2$ on the billiard table
 
 \medskip
 
 $\bullet$ {\it a trajectory starting at $p_1$ and with slope of large enough height will never pass through $p_2$}.
 
 \medskip
  
   This is an instance of how arithmetical content can affect chaotic behaviour (in this case of the trajectories).  
 Since the height is contributed by all absolute values, not only the archimedean ones, this fact says that  in the present circumstances chaotic behaviour is not a mere consequence of the {\it real} usual absolute value of the relevant parameters.  A similar comment holds on thinking that, as already mentioned,  density fails in the $p$-adic context.

(ii) {\tt Relation with integral points over function fields}.  Consider the case of Theorem \ref{T.esiste} when $p_1=p_2$, so we want periodic trajectories. If we write this condition in terms of sections of the elliptic scheme $\L$ corresponding to the billiard game, the condition amounts to a section not being torsion, but becoming torsion at the relevant slopes. These slopes correspond to poles of the Legendre coordinates expressing the section. In this way the infinity of these slopes may be deduced from a function field version of  Siegel's theorem on integral points, applied here with the elliptic curve $\L$ viewed over a finite extension of $\Q(s)$. See \cite{Z3} and \cite{CDMZ} for more on this viewpoint, which is valid for all sections (even if not coming from the billiard). 
\end{rem}

\end{small}

\medskip

Our next result concerns  two players which start from the same point and with directions which stay apart by a fixed angle, asking for a periodic path in both cases. We can prove finiteness for this pattern:

\begin{thm}\label{T.angolo} Let $p_0\in\T$ and $\alpha\in (0,\pi)$. Suppose also that $C$ is not a circle. There are only finitely many  pairs of billiard  trajectories $(p_0,v)$, $(p_0,v')$ which are both periodic and such that $v,v'$ form an angle $\alpha$. If $C$ is a circle then the same conclusion holds if $p_0\in\T^o$ is an interior point, not its center.
\end{thm}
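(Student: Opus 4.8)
I would argue by turning the periodicity of a billiard trajectory into a torsion condition on an elliptic scheme and then applying a relative Manin--Mumford theorem; here is the plan in four steps.

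\emph{Reduction to torsion on an elliptic scheme.} Recall from the basic setting above (going back to Jacobi's treatment of Poncelet's theorem) that on each of its segments a billiard trajectory inside $C$ is tangent to a fixed confocal conic $\mathcal C_\mu$, its \emph{caustic}, and that the bounce points form a Poncelet polygon inscribed in $C$ and circumscribed about $\mathcal C_\mu$; hence, by Poncelet's closure theorem, \emph{whether such a trajectory is periodic depends only on $\mu$}, and this corresponds to a point $\sigma(\mu)$ of the fibre $\L_\mu$ of the attached elliptic scheme $\L\to\P_1$ being torsion. Since $C$ is not a circle, $\L$ is non-isotrivial, i.e. its $j$-invariant $j_\L(\mu)$ is non-constant; and $\sigma$ is not a torsion section, since otherwise the billiard map would have bounded order on every caustic, so that all closed trajectories would have uniformly bounded length -- which is false for caustics approaching $C$ (compare also Theorem \ref{T.esiste}).

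\emph{Encoding the two trajectories.} Fix $p_0$. If $p_0$ is a focus, every trajectory from $p_0$ passes alternately through the two foci and only the one along the major axis is periodic, so there is nothing to prove; assume henceforth $p_0$ is not a focus. Let $V\cong\P_1$ be the (complexified) curve of directions at $p_0$; sending $v$ to the confocal conic tangent to the line through $p_0$ in the direction $v$ defines a non-constant morphism $\lambda\colon V\to\P_1$, and $(p_0,v)$ is periodic if and only if $\sigma(\lambda(v))$ is torsion. Rotation of the direction by a fixed angle $\theta$ is an automorphism $\rho_\theta$ of $V$; since $\lambda(v)=\lambda(-v)$, for $\alpha\in(0,\pi)$ one has $\lambda\circ\rho_\alpha\neq\lambda$ (equality would force $\rho_\alpha=\pm\mathrm{id}$). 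Then a pair of trajectories as in the statement amounts, up to the finite choice of sign $\pm\alpha$, to a point $v\in V$ for which both $\sigma(\lambda(v))$ and $\sigma(\lambda(\rho_\alpha v))$ are torsion.

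\emph{Relative Manin--Mumford.} Pulling $(\L,\sigma)$ back along $\lambda$ and along $\lambda\circ\rho_\alpha$ yields two elliptic schemes $\E_1,\E_2\to V$ with sections $\sigma_1,\sigma_2$, and the condition above says precisely that the section $(\sigma_1,\sigma_2)$ of the abelian surface scheme $\E_1\times_V\E_2\to V$ takes a torsion value at $v$. By the relative Manin--Mumford theorem for a curve in an abelian scheme of relative dimension $2$ over a curve (Masser--Zannier; see \cite{Z3}), the set of such $v$ is finite unless the image of $V$ lies in a proper special subvariety of $\E_1\times_V\E_2$. Since $\sigma$ is non-torsion and $\lambda,\lambda\circ\rho_\alpha$ are non-constant, neither $\sigma_1$ nor $\sigma_2$ is torsion; hence such a special subvariety can occur only if $\E_1$ and $\E_2$ are geometrically isogenous and $\sigma_1,\sigma_2$ are linearly dependent modulo torsion under an isogeny $\E_1\to\E_2$. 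Excluding this is the heart of the matter, and I expect it to be the only genuinely delicate step: the $j$-invariants of $\E_1,\E_2$ are $j_\L(\lambda(v))$ and $j_\L(\lambda(\rho_\alpha v))$, and one must show, using the non-isotriviality of $\L$ (equivalently, that $C$ is not a circle), that these two functions are not related by a modular correspondence and that the billiard section $\sigma$ does not satisfy the resulting dependence. This will require a concrete analysis of the caustic map $\lambda$ and of the elliptic surface $\L$ themselves; granting it, the desired finiteness follows.

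\emph{The circular case.} If $C$ is a circle of radius $R$ and $p_0$ is interior and not the centre, $\L$ is isotrivial and the same reduction produces, in place of the elliptic surface, the torus $\G_m$: a trajectory from $p_0$ is tangent to the concentric circle at distance $r=|p_0|\,|\sin\theta|$ from the centre, where $\theta$ is the angle between $v$ and the radius through $p_0$, and it is periodic exactly when $e^{i\arccos(r/R)}$ is a root of unity. A pair as in the statement thus yields a torsion point of a fixed algebraic curve in $\G_m^2$, and by the Manin--Mumford theorem for tori (Lang) there are only finitely many such unless that curve is a torsion translate of a subtorus; because of the square root entering the algebraic relation between $r(\theta)$ and $r(\theta+\alpha)$ this happens only if $|p_0|=0$, i.e. if $p_0$ is the centre, which is excluded. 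Hence the conclusion would follow in this case as well.
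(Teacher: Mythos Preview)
Your overall strategy matches the paper's exactly: reduce periodicity to torsion of the billiard section on the Legendre scheme, pull back along the two caustic maps $\lambda$ and $\lambda\circ\rho_\alpha$, form the fibered product, and invoke the relative Manin--Mumford theorem (Theorem~\ref{CMZ}). But you explicitly ``grant'' the one step that carries all the content, namely excluding a generic isogeny between the two pulled-back elliptic schemes, and this is a genuine gap. The paper carries this out concretely: if $\E_1,\E_2$ were isogenous then their $j$-invariants $J,J'$ would satisfy a modular equation $\Phi_n(J,J')=0$; since modular polynomials are monic and symmetric, $J'$ is integral over $\Z[J]$ and conversely, so $J$ and $J'$ must have the same pole set on the $\xi$-line. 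The poles of $j(\lambda(\xi))$ are computed to lie at $\xi=(b\pm ic)/a$, $\pm b/(c\pm a)$, $\pm i$, and one checks that the homography $g(\xi)=(\xi+t_0)/(1-t_0\xi)$ encoding the rotation by $\alpha$ (which fixes exactly $\pm i$) cannot permute this six-point set for $\alpha\in(0,\pi)$, except in a couple of borderline configurations ($a=0$ with $\alpha=\pi/3$, or $p_0$ a focus) that are dispatched separately by comparing pole \emph{multiplicities} or by direct inspection. Note also that once non-isogeny is established you do \emph{not} need to exclude a linear relation between $\sigma_1$ and $\sigma_2$ separately: if $\E_1,\E_2$ are non-isogenous then every proper group subscheme of $\E_1\times_V\E_2$ projects to a torsion subscheme in one factor, and the billiard section is non-torsion.

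In the circular case your outline is again the right shape, but the bare assertion that the relevant curve in $\G_m^2$ is a torsion translate of a subtorus only when $p_0$ is the centre is unjustified and is again the whole point. The paper proves it by a genus computation: after writing the two periodicity conditions explicitly one obtains a curve that dominates an elliptic curve (a double cover of the $t$-line branched at four points), hence is not rational, whereas any torsion coset in $\G_m^2$ is rational; Theorem~\ref{T.torsion} then gives the finiteness.
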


  In a previous version of the paper we only considered the case of a non-circular elliptic billiard. We owe to A. Sorrentino the remark that for a circular billiard the situation is slightly different; we then considered this issue in the present version. 

Recently G. Binyamini \cite{Bin} has produced effectivity of the relevant ingredients of the proofs, so if we work with computable quantities one can exhibit in principle the finitely many directions in question, and similarly for the next results.
Here also the question arose about quantitative estimates for the number of solutions in the cases where finiteness holds. This issue was put forward by A. Glutsyuk, whom we thank. We do not know the exact dependence of the bounds on these data.

  If we let also the point $p_0$ vary, then, using e.g. the Betti map for a higher dimensional base (see in particular \cite{ACZ}), one can show that the relevant pairs form an infinite set, actually containing a denumerable union of sets of positive dimension. 
 We shall say a little more on these problems in the final section.

\medskip

Our second finiteness conclusion  is inspired by an elliptical (possibly circular) billiard with two balls $p_1,p_2\in\T$ and a hole  $h\in C$.  The purpose is that a billiard shot sends $p_1$ to hit $p_2$ (after any number of bounces) so that $p_2$ in turn  goes eventually into $h$ (where we suppose that $p_2$ maintains the direction of $p_1$). Again, there is finiteness, except in a well described situation:

\begin{thm}\label{T.buca} Let $p_1,p_2\in \T^0$ be distinct interior points and  $h\in C$ a point in the border.  
There are only finitely many shots $(p_1,v)$ such that the trajectory meets both $p_2$ and $h$, unless $p_1,p_2$ are the foci of $C$. 
\end{thm}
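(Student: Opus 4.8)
The plan is to use the classical elliptic description of elliptical billiards and reduce the statement to a relative Manin--Mumford (unlikely intersection) assertion in a family of products of elliptic curves. First I would set up the relevant elliptic scheme. Every segment of a billiard trajectory in $\T$ lies on a line tangent to a unique confocal conic, the \emph{caustic}, whose parameter $\lambda$ is determined by the shot $(p_1,v)$; for $p_1$ not a focus, as $v$ varies $\lambda$ sweeps a non-degenerate interval, whereas for $p_1$ a focus the only confocal conic through $p_1$ is the degenerate segment joining the two foci, so that the caustic is then that fixed member for every $v$. Assuming $p_1$ is not a focus, and organising the curves $E_\lambda$ of flags $(x,\ell)$ with $x\in C$, $\ell\ni x$ tangent to the caustic $K_\lambda$, into a family, one obtains an elliptic surface $\L$ over a curve $B$ (a finite cover of the $\lambda$-line, taken large enough that the sections below are defined), together with a section $T\colon B\to\L$ for which ``advance one bounce'' is translation by $T$; as the caustics have non-constant period, $T$ is non-torsion and $\L\to B$ is non-isotrivial. (After a suitable quadratic base change $\L$ becomes a base change of the Legendre scheme, as indicated above.)

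Next I would translate the two conditions. A shot $(p_1,v)$ corresponds to a point of $E_\lambda$, and the points of $E_\lambda$ whose chord passes through $p_1$ (resp.\ through $p_2$; resp.\ whose boundary point is $h$) form, fibrewise, a finite set, hence a multisection $\mathcal C_1$ (resp.\ $\mathcal C_2$, resp.\ $\mathcal C_3$) of $\L\to B$ of degree $\le4$ (resp.\ $\le4$, resp.\ $\le2$). The requirement that the trajectory of $(p_1,v)$ meet both $p_2$ and $h$ then says that the $T$-orbit of the corresponding point meets $\mathcal C_2$ and $\mathcal C_3$ (it suffices to treat the weaker Zariski condition that some chord through these points occur). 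After a further finite base change splitting the multisections into sections, and taking a finite union over the choices, it suffices to bound, for sections $Q$ of $\mathcal C_1$, $R$ of $\mathcal C_2$, $S$ of $\mathcal C_3$, the set of $\lambda\in B$ with
\[
 Q(\lambda)-R(\lambda)\in\Z\,T(\lambda)\qquad\text{and}\qquad Q(\lambda)-S(\lambda)\in\Z\,T(\lambda);
\]
since $v\mapsto\lambda$ is finite-to-one, finiteness of this set gives Theorem~\ref{T.buca}. Discarding the finitely many $\lambda$ over the (finite) bad locus of $\L\to B$, one works over the open part where $\L$ is an abelian scheme.

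The two displayed conditions say that $Q(\lambda)-R(\lambda)$, $Q(\lambda)-S(\lambda)$ and $T(\lambda)$ all lie in the cyclic group $\Z\,T(\lambda)$, i.e.\ that the point $(Q-R,\,Q-S,\,T)(\lambda)$ of $E_\lambda^{3}$ lies in an algebraic subgroup of $E_\lambda^3$ of codimension $\ge2$. By the results on unlikely intersections for a curve in an abelian scheme over a curve --- here a product of fibres of (a base change of) the Legendre scheme, hence within the scope of the theorems of Masser--Zannier and Barroero--Capuano recalled in \cite{Z3} (cf.\ also \cite{BGT}) --- this can happen for infinitely many $\lambda$ only if the sections $Q-R$, $Q-S$, $T$ are not linearly independent over $B$, i.e.\ satisfy an identical relation $a(Q-R)+b(Q-S)+cT=\tau$ with $\tau$ a torsion section and $(a,b,c)\ne(0,0,0)$ (allowing the degenerate cases where one of $Q-R$, $Q-S$, $T$ is itself identically torsion).

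The remaining, and main, point is to show that no such identical relation can occur unless $p_1,p_2$ are the foci --- and to deal with that excluded case. Since $T$ is non-torsion, the dangerous relations involve $Q-R$ or $Q-S$, and a relation of the shape $Q-R=cT+\tau$ would force the $p_1$-flag and the $p_2$-flag to differ by a \emph{fixed} bounded number of bounces for \emph{every} caustic; I would show this rigidity is incompatible with $p_1\ne p_2$ interior and $p_1$ not a focus --- for instance by specialising $\lambda$ to the (dense set of) values where $T(\lambda)$ is $N$-torsion and comparing, via the Betti map or directly, the torsion orders thereby forced on $Q(\lambda)-R(\lambda)$ --- and likewise for $Q-S$, the mixed relations with $a,b\ne0$ reducing to these by the same specialisation. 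I expect this to be the hardest step: it is really a statement about the geometry of the sections $Q,R,S$ on the surface $\L$, and it should use that $p_1,p_2$ are interior points (so $Q,R$ are sufficiently generic) and that $h\in C$, the conclusion being that every such relation forces $p_1$ to be a focus and then $p_2$ to be the other focus. In that excluded case the construction above collapses --- the caustic is the fixed degenerate member and $\L$ degenerates to a $\G_m$-bundle --- and there the trajectory from $p_1$ automatically meets $p_2$ by the focal reflection property, while a direct computation with the multiplicative dynamics (the bounce at which $h$ is hit can be pushed arbitrarily far) shows that infinitely many shots also meet $h$; so the exception in the statement is genuine, and finiteness holds in all other cases.
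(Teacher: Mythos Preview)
Your overall architecture matches the paper's: set up the billiard elliptic scheme, convert ``hits $p_2$ and $h$'' into two linear relations among sections $(Q-R,\ Q-S,\ T)$, and invoke Barroero--Capuano/Masser--Zannier to reduce to excluding identical linear relations among these sections. That is exactly what the paper does (with $\sigma_{p_1},\sigma_{p_2},\sigma_h,\kappa$ in place of your $Q,R,S,T$).

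There are two genuine gaps.

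\medskip
\textbf{The independence step.} You correctly flag it as the hardest, but your sketch (``specialise to $\lambda$ where $T(\lambda)$ is $N$-torsion and compare torsion orders'') does not lead anywhere obvious: if $Q-R=cT+\tau$ identically, such a specialisation only tells you $(Q-R)(\lambda)$ has order dividing $\mathrm{lcm}(N,|\tau|)$, which is no contradiction. The paper uses two different ideas. For the relation $\sigma_p-\sigma_h=l\kappa$ (your $Q-S=cT+\tau$), it argues on the surface $C\times C$: the curve $L_p$ of collinear triples through $p$ meets the diagonal at points $(q,q)$ where the tangent at $q$ passes through $p$; since $p$ is interior these $q$ are non-real, hence $\ne h$, and the billiard automorphism \emph{fixes} $(q,q)$, so no power of it can send $L_p$ to $L_h$. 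For the generic linear independence of all four sections, the paper uses a ramification argument: the billiard section lives over an extension ramified only above $s=1,\infty$, whereas $\sigma_h$ forces ramification over the hyperbolic caustic through $h$, and $\sigma_{p_i}$ over the elliptic caustic through $p_i$; distinct ramification loci force independence. In the residual special case (same caustics through $p_1,p_2$) the paper computes the N\'eron--Tate pairing matrix. None of this is captured by your torsion-specialisation sketch.

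\medskip
\textbf{The circular case.} You do not mention it, but when $C$ is a circle the whole elliptic picture collapses (the ``elliptic curve'' degenerates to $\G_m$) and the paper treats it by completely different methods: the problem becomes a system of multiplicative relations in $\G_m^3$, and one applies the Bombieri--Masser--Zannier theorem on intersections with codimension-$2$ subgroups, together with ad hoc arguments in the degenerate subcases $|p_1|=|p_2|$. This is not a minor wrinkle---it is a separate proof with separate tools.
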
 

Again, the proof in the special case when $C$ is a circle is  different (and easier) from the diophantine viewpoint, involving roots of unity instead of elliptic torsion.  \medskip

Still another finiteness result  
concerns trajectories passing several times  through a given point. A shot from a point $p_0$ giving rise to a trajectory eventually passing again through $p_0$ will be called a {\it boomerang shot} (from $p_0$).

Boomerang shots from a point $p_0$ can be of three types:

\begin{enumerate}

\item periodic trajectories: in that case they pass through $p_0$ infinitely often;

\item the trajectory passes through $p_0$ a second time with the same direction but a different orientation; in general these trajectories intersect $p_0$ exactly two times;

\item the trajectory passes through $p_0$ with another direction; again these trajectories generically intersect $p_0$ exactly two times.

\end{enumerate}


Note that if a trajectory is of type $(2)$ and $(3)$ at the same time, then it intersects $p_0$ at least three times.  If it passes through $p_0$ five times, then clearly it is periodic.

We have:

\begin{thm}\label{T.ritorno} Let $p \in\T^o$ be an interior point, not a foucs of $C$. Then there are infinitely many boomerang shots from $p$  of each of the three types above. There are only finitely many boomerang shots which belong to   two distinct  types as above. In particular, there are only finitely many shots giving rise to a trajectory  passing through $p$ exactly three or four times. 
\end{thm}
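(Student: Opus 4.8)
The plan is to exploit the elliptic picture behind Theorem~\ref{T.esiste}. Via the Jacobi-Poncelet correspondence, a shot $(p,v)$ determines a confocal caustic $\Gamma$ and a point $x$ on the associated elliptic curve $E_\Gamma$ (whose points are pairs: a point of $C$ together with a tangent line from it to $\Gamma$), the billiard map acting on $E_\Gamma$ as the translation $y\mapsto y+T$ by a ``Poncelet section'' $T=T_\Gamma$ depending only on $\Gamma$, with periodicity of the trajectory equivalent to $T$ being torsion. Letting $v$ vary, the shots from $p$ sweep out a curve $D_p$ in the phase space, a finite cover of degree $4$ of the base $B$ (the caustic line) of the resulting elliptic surface $\X\to B$; after a suitable base change one obtains honest sections $P$ (recording $x$) and $T$ of $\L$, together with the two natural fibrewise involutions — $j$, swapping the endpoints of a chord, and $k$ ($=$ time reversal), swapping the two tangents from a point to $\Gamma$ — both of the form $y\mapsto c-y$ and composing to the billiard translation. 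The first step is to rephrase the three types as conditions on these data: type~$(1)$ is ``$T(b)$ torsion''; type~$(2)$ is ``the forward orbit $x+\Z\,T$ of $x$ contains $j(x)$'', equivalently ``the trajectory undergoes a perpendicular bounce (and then retraces)'', equivalently $(c_j-2P)(b)\in\Z\cdot T(b)$; type~$(3)$ is ``that orbit contains one of the two states of the \emph{other} chord through $p$ tangent to $\Gamma$'', recorded by a further section $P^*$.

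For the existence assertions, type~$(1)$ is Theorem~\ref{T.esiste} with $p_1=p_2=p$. For types $(2)$ and $(3)$: Theorem~\ref{T.esiste} (again with $p_1=p_2=p$) furnishes, for every $n>0$, a trajectory from $p$ returning to $p$ after exactly $n$ bounces, hence infinitely many boomerang shots; examining in the elliptic picture whether the returning chord is the initial one reversed or the second tangent chord from $p$ shows that infinitely many are of type~$(2)$ and infinitely many of type~$(3)$. (Alternatively, and in the manner of the proof of Theorem~\ref{T.esiste}: the perpendicular-bounce locus $\mathrm{Fix}(k)$ and the ``second chord'' locus are non-trivial multisections of $\X\to B$, and the sections $P+nT$ meet them for infinitely many $n$ by a functional-height / Betti-map density argument; one also checks that such shots generically lie in a single type and pass through $p$ exactly twice.)

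For the finiteness of shots lying in two distinct types, the key point is that each coincidence imposes a diophantine over-determination. If the shot is periodic (types $(1)$ and $(2)$, or $(1)$ and $(3)$) then $T(b)$ is torsion while, simultaneously, one of the sections $c_j-2P$, $P^*-P$, $c_j-P-P^*$ is torsion: a case of the relative Manin-Mumford (Masser-Zannier) theorem, so the corresponding set of $b\in D_p$ is finite \emph{provided} the two sections are independent. If the shot is of types $(2)$ and $(3)$ — the case behind part (c), where the trajectory meets $D_p$ at $\ge 3$ points — then the orbit $x+\Z\,T$ contains all four points of the fibre of $D_p$, forcing both $c_j-2P\in\Z\,T$ and $P^*-P\in\Z\,T$; a computation shows $P^*-P$ to be a torsion section, which collapses ``$P^*-P\in\Z\,T$'' to ``$T(b)$ torsion'' and returns us to the previous case. (Were this to fail, one would instead invoke the dynamical-Mordell-Lang type finiteness for an elliptic surface under translation by a section that underlies Theorem~\ref{T.buca}.) Part (c) is then immediate: since only two chords through $p$ are tangent to a given caustic, a trajectory passing through $p$ exactly three or four times uses one of them in both orientations and the other at least once, hence is of types $(2)$ and $(3)$ at once.

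The main obstacle, recurring at every appeal to relative Manin-Mumford, is the independence verification: ruling out a relation $a_1\sigma_1+a_2\sigma_2\equiv(\text{torsion section})$ with $(a_1,a_2)\ne(0,0)$ for the pairs of sections in play. Here the structure helps: $T$, $c_j$ and the analogous ``second chord'' datum depend only on $C$ and $\Gamma$, not on $p$, whereas $P$ genuinely varies with $p$ as long as $p$ is an interior point other than a focus; any such relation would then force $P$, modulo torsion, to be an essentially $p$-independent section, which amounts to $p$ lying on $C$, or to the degeneration occurring at a focus (every chord through a focus passing through both foci). Making this rigorous requires the explicit equations for $\X$ and for the sections $T$, $P$, $c_j$, $c_k$ developed earlier in the paper, and it is this geometric computation, rather than the diophantine input, that is the heart of the matter.
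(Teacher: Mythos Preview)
Your framework matches the paper's: the four sections $\pm\sigma_p,\pm\tilde\sigma_p$ (your $P,-P,P^*,-P^*$) together with the billiard section $\kappa$ (your $T$), and the reduction of each pair of types to a system of two linear conditions on these sections, is exactly how the paper proceeds. The (1)+(2) and (1)+(3) cases indeed collapse to a pair of torsion conditions and are handled by relative Manin--Mumford (Theorem~\ref{CMZ}). For existence, your ``alternative'' route is the one that works and is what the paper does (Proposition~\ref{P.reali1} in the Appendix): the primary idea of invoking Theorem~\ref{T.esiste} with $p_1=p_2$ does not by itself separate types~(2) and~(3).

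The genuine gap is in the (2)+(3) case. Your claim that $P^*-P$ (i.e.\ $\tilde\sigma_p-\sigma_p$) is a torsion section is \emph{false} when $p$ is off the axes: the paper computes the N\'eron--Tate pairing on the lattice $\langle\sigma_p,\tilde\sigma_p,\sigma_h\rangle$ (end of \S3.3, referring to \cite{CZPonc}) and finds $\langle\sigma_p,\sigma_p\rangle=\langle\tilde\sigma_p,\tilde\sigma_p\rangle=1$, $\langle\sigma_p,\tilde\sigma_p\rangle=0$, so $\sigma_p,\tilde\sigma_p,\kappa$ are linearly independent. Only when $p$ lies on an axis does a symmetry force $2\sigma_p=\pm2\tilde\sigma_p$. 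Thus your collapse to ``$T$ torsion'' does not happen, and the (2)+(3) system $2\sigma_p\in\Z\kappa$, $\sigma_p-\tilde\sigma_p\in\Z\kappa$ must be treated directly as a codimension-$2$ unlikely intersection: this is Theorem~\ref{B-C} (Barroero--Capuano) or equivalently Theorem~\ref{T.Ghioca-Hsia-Tucker}, which is indeed the tool underlying Theorem~\ref{T.buca}---so your fallback is the correct and only route. Finally, the independence verification cannot be done by the heuristic ``$P$ varies with $p$ while $T,c_j$ do not'': all the sections live on a common base after finite cover, and the paper establishes independence via ramification of the minimal fields of definition (Claim in \S3.3) together with the explicit height computation just mentioned.
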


By the same methods, we could prove the further  following  finiteness result: 

{\it Let $p_0,p_1$ be two points of $\T$, not both foci. There exist only finitely many boomerang shots from $p_0$ hitting $p_1$.}

\smallskip

In general, if the point $p_0$ is given in advance,  there exist no boomerang shots from $p_0$ hitting another given point $p_1$; also there are no non-periodic boomerang shots passing three times through $p_0$. Thanks to the mentioned effective results of Binyamini in some ingredients of our proofs, it  should be possible to effectively  decide whether such trajectories exist or not for a given point $p_0$.

\begin{small} 
\begin{rem}\label{R.Betti-map}
As mentioned above, if the starting point $p_0$ is allowed to vary on the billiard table, then this extra degree of freedom enables to produce infinitely many cases where e.g. a non-periodic boomerang shot hitting $p_0$ three (and even four) times exists; or, given $p_1$, one can find infinitely many points $p_0$ from which a boomerang shot hitting $p_1$ can be found. The proof of these existence results, however, needs a study of the Betti map associated to a section of a higher dimensional abelian scheme; in particular, one should prove the non degeneracy of this map in an open set of real points. This could be done by combining the arguments of \cite{CMZ} with those of section 9 of \cite{ACZ}.
\end{rem}
\end{small}

\smallskip

\begin{rem}  \label{R.altri} {\tt About other  billiards}. 
It is natural to ask what happens of these statements on considering other types of billiards. 

Let us for instance take rectangular ones, or more generally those whose table is a parallelogram. We may view them as associated to  a lattice $L$ in $\C$.  
It is easy to see that the analogue of Theorem \ref{T.esiste} holds (though the results on heights recalled in Remark \ref{R.realicommenti} do not). 

As to Theorem \ref{T.angolo}, we shall prove the following:

\begin{thm}\label{T.angolo-parallelogramma}
Let $L\subset \C$ be a lattice. If for some point $p_0\in \C$ and some angle $\alpha\in (0,\pi)$ there are more than three  pairs of periodic trajectories for the billiard associated to $L$ which pass through $p_0$ and form an angle $\alpha$ at $p_0$, then $\C/L$ has Complex Multiplication. Vice-versa, if  $\C/L$ has CM then for  infinitely many $\alpha\in (0,\pi)$ and for every $p_0\in \C$ there are infinitely many pairs of periodic orbits passing through $p_0$  and forming at $p_0$ an angle $\alpha$. 
\end{thm}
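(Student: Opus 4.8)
The plan is to reduce everything to an elementary fact about projective transformations of $\P_1$ defined over $\Q$ versus over $\R$. Normalise $L=\Z+\Z\tau$ with $\tau=x+iy$, $y>0$; recall that $\C/L$ has complex multiplication exactly when $\tau$ is imaginary quadratic, and --- by completing the square in a relation $A\tau^2+B\tau+C=0$ --- this happens if and only if $x\in\Q$ and $y^2\in\Q$. As in the analogue of Theorem \ref{T.esiste} for parallelogram billiards, the periodic trajectories through a fixed point $p_0$ are in finite-to-one correspondence with the \emph{rational directions} of $L$, i.e.\ with the nonzero vectors of $L$ taken up to positive real scaling. Identify the set of directions with $\P_1(\R)$ by sending the direction of $a+b\tau$ to $[a:b]$; then the rational directions become precisely $\P_1(\Q)$, and ``rotation by the angle $\alpha$'' becomes a transformation $\psi_\alpha\in\mathrm{PGL}_2(\R)$, namely $\psi_\alpha=T^{-1}R_\alpha T$, where $R_\alpha$ is the rotation matrix by $\alpha$ and $T$ is the linear map carrying lattice coordinates $(a,b)$ to the Cartesian coordinates $(a+bx,\,by)$ of the corresponding vector. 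Thus a pair of periodic trajectories through $p_0$ forming an angle $\alpha$ corresponds to a pair $P,Q\in\P_1(\Q)$ with $Q=\psi_{\pm\alpha}(P)$.

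The core step is the following elementary lemma: an element of $\mathrm{PGL}_2(\R)$ that carries three distinct points of $\P_1(\Q)$ into $\P_1(\Q)$ already lies in $\mathrm{PGL}_2(\Q)$ --- indeed it then coincides with the unique element of $\mathrm{PGL}_2$ with those prescribed images on three $\Q$-rational points, and this element is visibly defined over $\Q$. Hence, if there are more than three pairs as above, then --- using $\psi_{-\alpha}=\psi_\alpha^{-1}$ to reverse some of them, and noting that distinct (unordered) pairs give distinct base points --- at least three distinct points of $\P_1(\Q)$ are mapped into $\P_1(\Q)$ by a single $\psi_{\varepsilon\alpha}$ with $\varepsilon=\pm1$, and therefore $\psi_\alpha\in\mathrm{PGL}_2(\Q)$. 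A direct computation gives, in $\mathrm{PGL}_2(\R)$,
$$\psi_\alpha \;=\; \begin{pmatrix} y\cot\alpha - x & -(x^2+y^2)\\ 1 & x+y\cot\alpha\end{pmatrix},$$
valid since $\sin\alpha\neq0$; as the lower-left entry is $1$, this matrix is a real multiple of a rational one precisely when $y\cot\alpha-x$, $x+y\cot\alpha$ and $x^2+y^2$ are rational, i.e.\ (taking sums and differences) precisely when $x\in\Q$, $y\cot\alpha\in\Q$ and $y^2\in\Q$. In particular $\psi_\alpha\in\mathrm{PGL}_2(\Q)$ forces $x\in\Q$ and $y^2\in\Q$, which is exactly the complex multiplication of $\C/L$; this proves the first assertion.

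For the converse, suppose $\C/L$ has CM and put $K=\Q(\tau)$, an imaginary quadratic field. Fix any $\beta\in K$ with $\mathrm{Im}\,\beta>0$; since $\tau$ is quadratic, $\beta L\subseteq N^{-1}L$ for a suitable integer $N\geq1$, so for every $\lambda\in L\setminus\{0\}$ the two vectors $\lambda$ and $N\beta\lambda$ both belong to $L$, and their directions differ by the fixed angle $\alpha:=\arg\beta\in(0,\pi)$. As $\lambda$ runs over pairwise non-parallel lattice vectors this produces infinitely many pairs of periodic trajectories through any prescribed $p_0$ that form the angle $\alpha$. Finally, letting $\beta$ range over, say, $1+n\sqrt{-m}$ (where $K=\Q(\sqrt{-m})$), the angle $\arg\beta$ takes infinitely many distinct values in $(0,\pi)$, giving the claimed infinitude of such $\alpha$.

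The main obstacle I anticipate is not in the short arithmetic core above but in the geometric set-up: making the dictionary ``periodic trajectory through $p_0$'' $\leftrightarrow$ ``rational direction of $L$'' precise and unconditional for a general parallelogram (rather than just a rectangle), and in particular controlling the finite multiplicity in this correspondence, which is what governs the exact numerical threshold in the statement --- once the multiplicity is one, ``more than three'' is comfortably enough, and in fact three pairs already suffice. One should also dispose separately of the degenerate configurations in which one of the relevant directions is a coordinate axis, so that $P$ or $Q$ is a point at infinity in the chosen chart, and of the special value $\alpha=\pi/2$, where $\psi_\alpha$ is an involution and the counting of distinct base points needs a minor adjustment.
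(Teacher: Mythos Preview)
Your argument is correct and takes a genuinely different route from the paper's.

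The paper argues directly with the relations $e^{i\alpha}=t_j\lambda_j/\delta_j$: writing each $\lambda_j,\delta_j$ as linear polynomials $g_j(\tau),h_j(\tau)$ with integer coefficients, the cross-relations $\lambda_i\delta_j=t_{ij}\lambda_j\delta_i$ become statements about the coefficient vectors of the quadratics $g_i(x)h_j(x)$ in $\Q^3$, and a somewhat delicate case analysis (using all four solutions, in particular the fourth in the last step) forces the minimal polynomial of $\tau$ over $\R$ to have rational coefficients. Your approach replaces this with a single structural observation: the rotation by $\alpha$, conjugated into lattice coordinates, is a fixed element $\psi_\alpha\in\mathrm{PGL}_2(\R)$, and an element of $\mathrm{PGL}_2(\R)$ sending three points of $\P_1(\Q)$ into $\P_1(\Q)$ already lies in $\mathrm{PGL}_2(\Q)$. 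The explicit matrix for $\psi_\alpha$ (which you compute correctly) then reads off $x\in\Q$ and $y^2\in\Q$ immediately. This is cleaner, avoids the case analysis entirely, and as you observe actually shows that \emph{three} pairs suffice---so you get a sharper threshold than the paper's ``more than three''. The reversal trick $\psi_{-\alpha}=\psi_\alpha^{-1}$ is exactly what is needed to orient all pairs consistently, and your check that distinct unordered pairs yield distinct base points is correct (if two base points coincided, applying $\psi_\alpha$ would force the pairs to coincide). For the converse, your argument via multiplication by an element $\beta\in\Q(\tau)$ is essentially the same as the paper's.

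The caveats you flag at the end are the right ones but are not serious: the dictionary ``periodic trajectory through $p_0$'' $\leftrightarrow$ ``direction in $\P_1(\Q)$'' is exact for parallelogram billiards (unfolding to the torus $\C/L$ makes every rational direction periodic through every point, and conversely), so the multiplicity is one and no extra slack is needed; the point $\alpha=\pi/2$ causes no trouble since $\psi_\alpha$ is then an involution but your distinctness argument goes through unchanged.
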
  
We shall give the proof  in \S \ref{SS.altri}. 

One can carry out a similar 
analysis also for the other results here; for Theorem \ref{T.buca} this depends not only on the billiard but also on the points $p_1,p_2,h$ and similarly for Theorem \ref{T.ritorno}.  

 We think it could be  not free of interest to explore what happens of these assertions for more general polygonal billiards, e.g. those considered in the above quoted papers. We owe to C. McMullen the remark that for regular $n$-agonal billiards, the cross ratios of the slopes giving rise to periodical orbits belong to a fixed number field, actually to a cyclotomic field (see e.g. the paper \cite{CalSm} by K. Calta and J. Smillie). This implies that the field of definition of the periodic slopes is finitely generated.
 In the case of the square billiard, this field is in fact $\Q$. This phenomenon is in contrast with what happens with elliptical billiard, where the degree of the field of definition of periodic orbits tends to infinity with the order of the orbit. This again indicates that the arithmetic in the case of elliptic billiards plays a special role. 
 \smallskip
 
 As to  billiard tables which are smooth convex curves, again the finiteness results do not generally hold. For instance, in the case of Theorem \ref{T.angolo},  suppose that the players' first shot sends the initial ball from $p_0$ to points  $p_1$, resp. $p_2$ on the border, so that the directions differ by an angle $\alpha$. Now, on deforming smoothly the border curve in small neighbourhoods of $p_1$ and $p_2$ one can ensure that there are infinitely many shots for which both trajectories are periodic. One could ask what would happen by imposing further assumptions on such billiards, e.g. {\it integrability } conditions. However, this would easily lead again to elliptic billiards, according to a conjecture of Birkhoff (see \cite{sor}).
 
 \smallskip
 
 {\tt Diophantine content}. 
 It will be clear from the argument  (take for instance Theorem \ref{T.angolo}) that they depend on the fact that a  real-analytic arc  in a real torus $\R^2/\Z^2$  under appropriate assumptions can contain only finitely many rational points. In the present situations, these assumptions are strongly related to  the algebraicity of the elliptic family  $\L$ associated to the billiard.  In the case of more general curves inside such a torus, this finiteness may well fail;  in any case such finiteness would be probably  linked to deep results in transcendence, depending heavily on the context and generally falling outside the present methods. 
 
 Similar considerations hold for the other finiteness results proved in the paper.
\end{rem}

\smallskip

As mentioned above, we shall see how the above  results may be framed  as cases of a general situation  in algebraic dynamics, stated above as a ``Question''. 
In this direction, we provide evidence by proving another theorem, not directly related to billiards, but perfectly fitting into the mentioned general  issue, actually being one of the simplest possible examples of it.

\begin{thm}\label{T.P_2} Let $L_1,L_2,L_3$ be (complex)  lines in $\P_2$ and let $\beta\in{\rm Aut}(\P_2)$ be a linear automorphism. 
Suppose that the three lines lie in different orbits for the action of $\beta$ and that none of them contains a fixed  point for $\beta$.
Then there are only finitely many $\beta$-orbits of points in $\P_2$ which intersect all the three lines $L_1,L_2,L_3$.
\end{thm}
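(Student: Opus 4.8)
The approach is to turn the statement into a finiteness question about (a projection of) the zero set of a two–variable exponential polynomial, apply a Skolem–Mahler–Lech (equivalently, torus Mordell–Lang) theorem to describe that zero set, and then use the two hypotheses to dispose of each of the finitely many resulting pieces.

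\emph{Reduction.} Fix linear forms $\ell_1,\ell_2,\ell_3$ with $L_\ell=\{\ell_\ell=0\}$, a lift $B\in\mathrm{GL}_3(\C)$ of $\beta$, and for $i,j\in\Z$ put $p_{ij}:=\beta^{-i}L_1\cap\beta^{-j}L_2$. Since $L_1,L_2$ lie in distinct $\beta$–orbits the lines $\beta^{-i}L_1,\beta^{-j}L_2$ are always distinct, so $p_{ij}$ is a well defined point, and $\beta(p_{ij})=p_{i-1,j-1}$; hence the $\beta$–orbit of $p_{ij}$ is $\{p_{i-n,j-n}:n\in\Z\}$ and depends only on $k:=i-j$. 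A $\beta$–orbit meets all of $L_1,L_2,L_3$ if and only if it is the orbit of some $p_{ij}$ with $p_{ij}\in L_3$; and, the three lines $L_3,\beta^{-i}L_1,\beta^{-j}L_2$ being pairwise distinct (this is where all three orbits being distinct is used), the condition $p_{ij}\in L_3$ is equivalent to the concurrency of these three lines, i.e.\ to $F(i,j)=0$ where $F(i,j)$ is the $3\times3$ determinant of the matrix whose rows are the coefficient vectors of $L_3$, $\beta^{-i}L_1$ and $\beta^{-j}L_2$:
$$F(i,j)=\det\begin{pmatrix}\ell_3\\ \ell_1B^{i}\\ \ell_2B^{j}\end{pmatrix}.$$
Thus it suffices to show that the points $p_{ij}$, as $(i,j)$ ranges over $Z(F)\cap\Z^2$, fall into finitely many $\beta$–orbits.

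\emph{The torus input.} The entries of $B^{i}$ are $\C$–linear combinations of $i^{t}\lambda^{i}$ over the eigenvalues $\lambda$ of $B$, so $F$ is an exponential polynomial in $(i,j)$. By the two–variable Skolem–Mahler–Lech theorem — a consequence of Laurent's theorem on the intersection of a subvariety of $\G_m^{N}$ with a finitely generated subgroup — the set $Z(F)\cap\Z^2$ is a finite union of cosets $\Lambda_1,\dots,\Lambda_m$ of subgroups of $\Z^2$. It remains to bound, for each $l$, the number of $\beta$–orbits among $\{p_{ij}:(i,j)\in\Lambda_l\}$. A coset of rank $0$ contributes one orbit; a coset whose direction is $\Z(1,1)$ has $i-j$ constant along it and, since $\beta p_{ij}=p_{i-1,j-1}$, all its points $p_{ij}$ lie in a single orbit, so it also contributes one orbit. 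Everything therefore comes down to a \emph{transverse} coset $\Lambda=(i_0,j_0)+\Z(p,q)$ with $p\neq q$ (the rank–$2$ case $F\equiv 0$ being subsumed, as any rank–$2$ coset contains transverse directions).

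\emph{The crux.} Here both hypotheses enter. Because no $L_\ell$ contains a $\beta$–fixed point, $\beta$ has no pointwise–fixed line (such a line would meet every $L_\ell$), so up to conjugacy $\beta$ is diagonal with three distinct eigenvalues, a $2\times2$ Jordan block together with a third eigenvalue, or a single unipotent $3\times3$ block. Computing the $\beta^{e}$–invariant lines in each of these cases yields the \textbf{structural lemma}: a line containing no $\beta$–fixed point but invariant under some $\beta^{e}$ with $e\neq0$ forces $\beta$ to have finite order — and if $\beta$ has finite order then each $\bigcup_n\beta^nL_\ell$ is a finite union of lines and the theorem is immediate. For a transverse coset one applies a suitable power of $\beta$ to make $L_1$ fixed, rewriting the concurrency as ``$L_1,\ \beta^{-(i_0+np)}L_3,\ \beta^{\,c_0+nc}L_2$ concurrent for all $n$'' with $c=q-p\neq0$; reading off the intersection point $x_n:=L_1\cap\beta^{-(i_0+np)}L_3$ gives $\beta^{\,i_0+np}x_n\in L_3$ and $\beta^{-(c_0+nc)}x_n\in L_2$ for all $n$. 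Tracing this, and using the \emph{one}–variable Skolem–Mahler–Lech theorem to analyse the set of ``times'' at which the $\beta$–orbit of a point of $L_3$ returns to $L_1$, to $L_2$ or to $L_3$, one finds that either one of $L_1,L_2,L_3$ is invariant under a power of $\beta$ — contradicting the structural lemma, as $\beta$ is infinite — or the exponential–point sequence $x_n$ takes only finitely many values along $\Lambda$, whence $p_{i_0+np,j_0+nq}=\beta^{-(i_0+np)}x_n$ takes only finitely many values and $\Lambda$ contributes finitely many orbits. Summing the contributions of $\Lambda_1,\dots,\Lambda_m$ then shows that only finitely many $\beta$–orbits meet all three lines.

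The main obstacle is this last, transverse–coset, step. It has to be organised so as to avoid circularity: the alternative in which $x_n$ genuinely moves would, taken at face value, produce infinitely many orbits meeting all three lines — precisely the conclusion being proved — so its exclusion must proceed by repeatedly feeding the motion of the relevant intersection point into the one–variable Skolem–Mahler–Lech theorem until an actual $\beta^{e}$–invariant line emerges, and then invoking the (elementary but case–heavy) structural lemma, which is where the hypothesis ``no $L_\ell$ contains a fixed point of $\beta$'' is genuinely used.
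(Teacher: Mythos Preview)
Your reduction to the determinant condition $F(i,j)=0$ and the identification of the $\beta$--orbit with the value $k=i-j$ matches the paper's setup exactly. The divergence, and the genuine gap, is at the step where you invoke a ``two--variable Skolem--Mahler--Lech theorem'' as a consequence of Laurent's theorem.

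Laurent's theorem applies to a subvariety of $\G_m^N$ intersected with a finitely generated subgroup. When $\beta$ is semisimple this is fine: the entries of $B^i$ are linear combinations of pure characters $\lambda^i$, so $(i,j)\mapsto(\lambda_1^i,\lambda_2^i,\lambda_1^j,\lambda_2^j)$ embeds $\Z^2$ as a subgroup of $\G_m^4$, and Laurent gives exactly the coset structure you claim. But when $\beta$ is \emph{not} semisimple the entries of $B^i$ involve genuine polynomial factors $i^t$; the function $i\mapsto i^t\lambda^i$ is not a character, the image of $\Z^2$ is not a subgroup of any torus, and Laurent does not apply. Nor is the conclusion true in that generality: the paper's Proposition~4.1(3) produces (when the fixed-point hypothesis is dropped) infinite zero sets of the form $m=a\lambda^n+bn$, which are manifestly not finite unions of cosets in $\Z^2$. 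So the blanket claim ``$Z(F)\cap\Z^2$ is a finite union of cosets'' cannot be deduced from Laurent's theorem alone.

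It is true that under the full hypothesis of the theorem (no $L_\ell$ through a fixed point) the non-semisimple cases turn out to give \emph{finite} $Z(F)$ --- hence, a posteriori, a union of rank-$0$ cosets. But establishing this requires exactly the case-by-case work the paper does: a growth/valuation argument using an absolute value with $|\lambda|_\nu>1$ in the $\G_a\times\G_m$ case, and Runge's theorem on integral points of a cubic in the unipotent $\G_a$ case. These are not consequences of Laurent and you have not supplied substitutes.

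A second, independent issue: even in the semisimple case your ``crux'' handling of a transverse coset is only a sketch. The sentence ``tracing this, and using the one-variable Skolem--Mahler--Lech theorem \dots one finds that either one of $L_1,L_2,L_3$ is invariant under a power of $\beta$ \dots or $x_n$ takes only finitely many values'' is precisely the content that needs proof. In the paper's treatment of the $\G_m^2$ case this step occupies the bulk of the argument: one shows that the transverse coset forces an identical vanishing of a parametrized determinant, and then analyses column-by-column which coefficients of the $L_\ell$ must vanish, each vanishing being interpreted via the two Remarks on invariant lines and shared fixed points. Your structural lemma is correct and plays the role of those Remarks, but the bridge from ``transverse coset in $Z(F)$'' to ``some $L_\ell$ is $\beta^e$-invariant or contains a fixed point'' is not written out.

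In short: the architecture (determinant $\Rightarrow$ structure of zero set $\Rightarrow$ hypotheses kill the infinite pieces) is the same as the paper's, but the paper executes it by splitting on the isomorphism type of $G_\beta\in\{\G_m^2,\G_m,\G_a\times\G_m,\G_a\}$ and using a different tool in each case, whereas you attempt a uniform torus argument that is only valid in the first two cases and leave the transverse-coset elimination as an assertion.
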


It is clear that if $\beta$ has infinite order there are always countably many  orbits intersecting two given lines. On the other hand, it is natural to expect finiteness starting with three lines. Such finiteness cannot hold whenever one of the three lines is sent into another one by  a power of $\beta$. On the contrary, the condition that the three lines contain no fixed point for $\beta$ can be relaxed, but not omitted. In Proposition \ref{P.P_2}, and in the examples following its proof,  we shall give a complete description of the cases in which there exist infinitely many orbits intersecting three given lines. 

\smallskip
Our finiteness statement is formulated in terms of orbits, not of points. One can ask what happens if one of the three lines, say $L_1$, contains infinitely many points whose orbits meets both $L_2$ and $L_3$. These points might lie in only finitely many orbits. Combining the classical Skolem-Mahler-Lech theorem with our Theorem \ref{T.P_2} we can give a complete classification of the  cases when this happens (see Proposition \ref{P.SML}): in particular, $L_1$ must have only finitely many images under $\beta$. 
\smallskip

As said, the proof of Theorem \ref{T.P_2}  will require entirely different tools compared to the previous theorems, though several of the preceding statements (especially Theorem 1.5) could be phrased in terms of finiteness of orbits intersecting given curves.  After the proofs, we shall discuss about a plausible general  conjecture for finiteness in these situations. We shall remark how the most obvious tentative would lead to counterexamples, so some care is needed before putting forward some general (hypothetical) statement. 

\medskip

{\tt About our proofs}. The starting points for the proofs of all our theorems on elliptical billiards are the following two facts:

(i)  first, all segments of a billiard trajectory are tangent to a same conic, confocal with the billiard table $C$, called the `caustic' of the trajectory; 

(ii) second, once a caustic is fixed, the     set of possible pairs $(p,l)$ where $p\in C$ and $l$ is a line passing through $p$ and tangent to the given caustic, is naturally 
an algebraic variety which turns out to be a curve of genus one.

Varying the caustic (through a parameter)  produces an algebraic  one-dimensional family of   curves of genus one: in another language, an elliptic surface. Recall that genus one curves can be given a group structure (starting from a marked point),  producing  
elliptic curves.  The `billiard map', sending the pair $(p,l)$ to the pair $(p',l')$ obtained after the bounce,  consists in a translation with respect to this group structure.  (These two facts will be recalled  in detail in next paragraphs.)

Once  this setting is appropriately formulated, the proofs of our finiteness results (Theorems \ref{T.angolo}, \ref{T.buca}, \ref{T.ritorno}) rely on fairly recent results, mainly obtained via a method introduced by D. Masser with the second author  in \cite{MZ}, stating the finiteness of simultaneous torsion conditions  for several sections of an elliptic scheme, or, more generally, finiteness  for the set of points of the base curve where independent sections take values satisfying several dependence relations (these results, also by other authors, will be recalled in detail below).

\smallskip

Consider for instance the case of Theorem \ref{T.angolo}. Choosing a point and shooting the ball from it with a certain direction produces a point on a curve of genus $1$ (depending on the direction); after the first bounce one obtains another point on the same curve. Well,  their `difference'  on the elliptic curve associated to the curve of genus $1$ is a torsion point  if and only if the orbit is periodic. Two shots whose directions differ by a fixed angle as in Theorem \ref{T.angolo} will produce two different points on two different elliptic curves. The theorem asserts that these two points cannot be simultaneously torsion, apart for finitely many cases.

Geometrically, the real points of an elliptic curve form a circle (or a pair of circles); locally, the family of such circles, for varying the direction of the shots, is topologically constant, so every shot gives rise to a point on a fixed circle.  Varying the pairs of shots with a fixed  angle between them  produces a curve on the product of two copies of a circle, i.e. a two-dimensional real torus. Theorem \ref{T.angolo} follows from the fact that this curve does not meet infinitely many rational  points on the torus.  This kind of result strongly depends on the special nature of the curves which arise and would not be true in a  general context. For this reason we suspect that hardly  other proofs of these results could be obtained without appealing to the arithmetic and geometry of the context.

Yet in another language, working on the universal cover of the torus, we obtain a real-analytic  (transcendental) curve in $\R^2$ and our theorem follows from the fact that this curve cannot contain infinitely many points with rational coordinates. Ultimately,  a diophantine result of J. Pila (originating in work by Bombieri-Pila and eventually heavily generalized by Pila and J. Wilkie to higher dimensions) on rational points on transcendental surfaces must be used to prove this last fact, together with height considerations and Galois-theoretic properties coming from the arithmetic theory of elliptic curves.  For an account on these techniques, see the second author's book \cite{Z}. However, in this work some further ingredients  are needed for the proofs of both  the finiteness results like Theorem \ref{T.angolo}, \ref{T.buca} and the existence results like Theorem \ref{T.esiste}. 

\smallskip

Concerning our Theorem \ref{T.P_2}, the methods of proof are completely different; indeed, the possible infinite families of points on the first line whose orbit intersects the two other lines are automatically defined over a fixed number field. They will give rise to a system of exponential diophantine equations, or equations of mixed polynomial-exponential type. One then can apply rather classical results from the theory of diophantine equations involving linear recurrence sequences and relying on the Schmidt Subspace Theorem.

\bigskip

{\bf Acknowledgements}. The authors are grateful to S. Marmi and P. Oliverio  for interesting discussions and for pointing out some relevant references. 
The topic of this work was the object of some talks, for which the authors thank the organizers.  The discussion arising in these talks involved attracting issues and led to some improvements in the paper. In particular, we are pleased to thank C. McMullen and A. Sorrentino.

\bigskip

\section{Preliminaries and auxiliary results}\label{S.pre}

In this section we shall recall some basics from the theory of elliptical billiards, and we shall see how the space parametrizing  billiard shots gives rise to an elliptic scheme. Similarly, we shall  see how the phase space gives rise to an elliptic surface. Finally, we shall recall some auxiliary theorems of number-theoretical/geometrical nature which will be applied to derive the sought results.

\subsection{Elliptical billiards} The first appearance of properties of the elliptical billiard seems to be rather old, i.e. going back to the XVIII Century,  in Boscovich's \cite{B} (a reference which we thank  P. Sarnak for).  

As above, by {\it elliptical billiard} we mean a billiard whose table $\T$ is an ellipse $C$ together with its interior $\T^o$. For definiteness, suppose that $C$ has  foci at $(\pm c,0)$ for a $c\in (0,1)$, and (affine) equation 
\begin{equation}\label{E.C}
C:\qquad x^2+{y^2\over 1-c^2}=1.
\end{equation}
It is a remarkable theorem (capable of an `Euclidean proof', see e.g. \cite{T}, Thm. 4.4) that a billiard trajectory remains tangent to a confocal conic, called {\it caustic}.  This will be a hyperbola  or an ellipse  according as the trajectory crosses or not the segment joining the foci, a property which shall be shared by all segments of trajectory. (When the trajectory passes through a focus, it will continue to pass alternatively through the two foci, which amounts to a degenerate case of the former description, when the caustic becomes  the segment connecting the foci.)    Let us write an affine equation for the family of confocal conics:
\begin{equation}\label{E.caustic}
C_s:\qquad {x^2\over s}+{y^2\over s-c^2}=1,
\end{equation}
where $s$ is a parameter.  For {\it real}  billiard trajectories, we shall have $0<s<1$, and the caustic will be a hyperbola precisely  for $s<c^2$.

Suppose that $(p,v)$ is a billiard shot in the phase space, so $p\in C$.  Then  the line determined by $(p,v)$ will be tangent to a unique caustic $C_s$ ($s$ depending on $(p,v)$), so $(p,v)$   corresponds to a point in the so-called {\it dual} conic $\widehat{C_s}$. This has equation, in affine coordinates $t,u$, 
\begin{equation}\label{E.dual}
\widehat{C_s}:\qquad st^2+(s-c^2)u^2=1,
\end{equation}
in the sense that a line defined in the $xy$-plane by $tx+uy=1$ is tangent to $C_s$ if and only if $(t,u)$ verifies \eqref{E.dual}. 

\smallskip

{\tt A rational parametrisation of $C$}.  We give explicitly a rational parametrisation  of the ellipse $C$, which will be independently useful. The projective closure in $\P_2$ of $C$ becomes isomorphic to $\P_1$ on using for instance the map $z$ and its  inverse map given by 
\begin{equation}\label{E.par} 
z:=(x,y)\mapsto y/(x-1),\qquad \quad x=(z^2+c^2-1)/(z^2+1-c^2), \quad y=   2z(c^2-1)/(z^2+1-c^2).
\end{equation}

The group of automorphisms of $C$ generated by sign change on the coordinates corresponds to the group generated by $z\to -z$ (corr. to $(x,y)\to (x,-y)$) and $z\to (c^2-1)z^{-1}$ (corr. to $(x,y)\to (-x,-y)$). 

\smallskip

$\bullet$ We note once and for all that the above equations are  subject to restrictions (for instance $s\neq  0, \pm c$ and for $s=\pm 1$ the caustic becomes $C$), and are {\it affine}. However, as usually happens, we shall always  tacitly consider their completions in $\P_2$,  and  occasionally suitably interpret the objects also for the forbidden values.


\subsection{Correspondence with elliptic curves} We now resume in short  the correspondence  of this context with  elliptic curves, due essentially to Jacobi, who used it to give a proof of  the {\it Theorem of Poncelet}, which contains as a special case the situation of the elliptical billiard. (See  for instance the above quoted sources and the authors's booklet \cite{CZPonc}, and  the references therein for more.)  

Given  a caustic $C_s$, one can consider the curve $\E_s\subset C\times \widehat{C_s}\subset \P_2\times \P_2$ defined as the closure of the set of pairs $(p,l)$, where $p\in C$ belongs to the line $l\in \widehat{C_s}$ tangent to $C_s$. Note that for $s,p,l$ defined over $\R$ such a pair determines uniquely a point $(p,v)$ in the phase space.

  It turns out that $\E_s$ is smooth and has genus $1$ for complex  $s \neq 0, 1,c^2,\infty$.  The curve $\E_s$ has a Jacobian, which is an elliptic curve (see \cite{SilEC}), and may be seen as the connected component of the identity in the group of automorphisms of $\E_s$ (they have no fixed points except for the identity). If we let  $s$ vary arbitrarily in $\C$, the  field of definition is immaterial, but if we consider $s$ as a variable, then the ground field of the whole construction may be taken as  $\Q(c,s)$. This will be relevant in some verifications below. 

The above equations give the following presentation for $\E_s$ inside $\P_2\times\P_2$, written for simplicity in affine coordinates $(x,y)\times (t,u)$, but which should be of course extended on using bi-homogeneous coordinates:
\begin{equation}\label{E.Ys}
\E_s:\quad \left\{
\begin{matrix} x^2+{y^2\over 1-c^2}&=&1\\
 tx+uy&=&1\\ 
 s t^2+(s -c^2)u^2 &=&1.
 \end{matrix}\right.
\end{equation}
Here $p=(x,y)$ is a point on the ellipse $C$ whereas the line $l$ (containing $p$) corresponds to $(t,u)$ under the usual duality on $\P_2$ (expressed through the middle equation).

 \smallskip

{\tt Automorphisms of order $2$}. Observe that  for $s\neq 0, 1, c^2,\infty$   there is a group of four automorphisms of $\E_s$, defined over $\Q$, without fixed points, and  isomorphic to $(\Z/2)^2$:  the group is represented by $(x,y)\times (t,u)\to (\epsilon x,\eta y)\times (\epsilon t,\eta u)$, where $\epsilon,\eta=\pm 1$, indeed without fixed points for the said values of $s$. (These elements correspond to  the four points of order $2$ on $J_s$.) 
\smallskip

$\bullet$ If we want to refer to the whole {\it total} space of all $\E_s$ we shall use the notation $\E$.

A choice of a point as an origin will give a model of $\E_s$ as an elliptic curve (omitting the said values of $s$). Below we shall give an explicit Legendre model.

\subsubsection{The billiard map} A segment of billiard trajectory is determined by two ordered points $p,p'$ on $C$. If $v$ is a unit (real) vector which represents  the direction $p\to p'$, then $(p,v)$ is in the phase space. We let $(p',v')$ correspond to  the next segment of trajectory. The billiard map, usually denoted $T$ in this paper,  sends $(p,v)$ to $(p',v')$.  Because of the result recalled above, both of these pairs  determine the same caustic $C_s$, and then the billiard   map  acts on any $\E_s$ and in fact  is an (algebraic) automorphism of $\E_s$.  Thus $T$ makes sense also for {\it complex} points of $C$, numbers $s$ and complex vectors $v$.\footnote{We wonder whether the complex billiard dynamics, seen as a dynamics on a real surface   corresponding to the ellipse, may be given  a direct simple geometrical description.}

The billiard map  may be realized as the composition of two other automorphisms, actually involutions. A first involution, denoted $\iota$, sends $(p,l)$ to   $(p',l)$ (so it sends $(p,v)$ to $(p',-v)$ if we want to work on the phase space).  To define a second involution, denoted $\iota^*$, let  $l^*$ be  the other tangent from $p$ to $C_s$ (possibly $l^*=l$ whenever $p\in C\cap C_s$). Then we put $\iota^*=(p,l^*)$. (Note that the map $\pi:\E_s\to C$ has the pairs $x,\iota^*(x)$ as fibers.) Clearly $\iota, \iota^*$ are involutions. It is immediately checked that $T=\iota^*\circ \iota$. 

One may also easily see that both $\iota,\iota^*$ have fixed points (which are ramification points of the corresponding quadratic maps); it follows from general theory of curves of genus $1$  that $T$ has no fixed points and thus  is a translation  as an element of the jacobian $J_s$ of the curve $\E_s$ of genus $1$. We have moreover the\smallskip

 {\bf Theorem of Poncelet}: {\it The translation $T$ depends only on the caustic, not on $p$.} \smallskip

 This follows again from the general theory of curves of genus $1$, the proof by Poncelet\footnote{Poncelet considered actually a more general dynamics involving arbitrary pairs of non-tangent conics} however predating such facts. 
  See \cite{T}, p. 58 (especially Cor. 4.5),  for a somewhat different description of the translation, without invoking elliptic curves, and described as an element of $S_1$. This is consistent because the connected component of the identity in the group of {\it real} points of an elliptic curve over $\R$ is a circle. The present description covers also the case of complex coordinates. (Compare with Remark \ref{R.real} above.)  
  
  {\tt An invariant}.   It is also proved in an elementary geometric way that  (in our notation) the function 
 $(1-c^2)xv_1+yv_2$ is invariant for the billiard map (i.e. is an {\it integral}) for $v=(v_1,v_2)$, and indeed its square  is found to be $(1-c^2)(1-s)$. This corresponds to the constancy of a certain (easily written down)  rational function on the above elliptic curves.  We also note that there is an    area form, invariant   for the billiard map, on the phase space, given by $\sin\alpha\cdot  \d\alpha\wedge \d t$, where $t$ is an arc-length parameter on $C$ and, for $(p,v)$ in the phase-space,  $\alpha$ is the angle between $v$ and the positive tangent to $C$ at $p$ (this was discovered by Birkhoff \cite{Bi}, see also \cite{T}, p. 33).  
 
 These very interesting invariances can be used to prove many results without invoking elliptic curves. However (as we shall remark with explicit examples), the latter  description  seems indispensable to deal with other conclusions  of this paper, especially the present finiteness theorems.


\subsection{Billiard shots and elliptic schemes with sections}  We have recalled that each billiard shot defines a caustic, and in turn a curve $\E_s$  of genus $1$ and its Jacobian $J_s$, which is an elliptic curve. If we disregard the ground field, $J_s$  is isomorphic to the curve $\E_s$ itself once we have chosen  a point on it as origin.  If we work over $\Q(c,s)$ where $s$ is a variable this procedure increases the ground field. Note that this viewpoint  is like having a scheme over the $s$-line whose fibers are elliptic curves. We indicate briefly two distinct (but equivalent) ways of dealing more explicitly with this.

In the first place, it is relevant  to consider the intersection $C\cap C_s$, consisting of  the ramification points of the projection of $\E_s\subset C\times\widehat {C_s}$ to $C$.  One easily finds that
\begin{equation}\label{E.ccs}
C\cap C_s=\{(\pm x_0,\pm y_0)\}, \qquad x_0^2={s \over c^2},\quad y_0^2={(1-c^2)(c^2-s)\over c^2}.
\end{equation}
If these points are pairwise distinct, which amounts to $s \neq 0, c^2,\infty$, then $\E_s$ is indeed smooth of genus $1$.  Of course to choose these points we have to perform a base change from the $s$-line, defined by the equations on the right of \eqref{E.ccs}.

Note that, considering the presentation \eqref{E.Ys}, we have a natural projection $\E_s\to C$. For a point $p\in C$ the inverse image consists of the tangents from $p$ to $C_s$. These tangents coincide precisely when $p\in C_s$, i.e. when $p$ is one of the points in question. Hence these points are precisely the ramification points of the said projection. We remark that knowledge of these points determines the curve $\E_s$ up to isomorphism (over an algebraic closure of the ground field).

 We must also exclude for the moment $s =1$, i.e. $C_s=C$; however soon we shall see that on considering other models we may allow this value as well, in the sense that it yields an elliptic curve, though not associated to the billiard. (When $C=C_s$ one might also  conceive a billiard shot as acting only on balls on $C$ and as producing no effect on the ball.)

We finally note that , for real $s\neq 0,1,c^2$, these points are real precisely when $s< c^2$,  
 which happens when $C_s$ is a hyperbola. This is clear also geometrically. When $s>c^2$ the caustic is an ellipse and these points have real $x_0$ and purely imaginary $y_0$.


\subsection{A Legendre model} \label{SS.legendre}  We now want to construct an explicit isomorphism of the curve $\E_s$ of genus $1$ given by \eqref{E.Ys} (where $s$ is a variable) with an elliptic curve in Legendre form; this will be convenient for some calculations.

Using the map $z$ of \eqref{E.par} giving an isomorphism $C\cong \P_1$,  a cross-ratio  of the above four points in \eqref{E.ccs}, denoted here $\lambda$,  is  is easily calculated (as obtained below in \eqref{E.lambda}) to be 
\begin{equation}\label{E.cr}
\lambda=x_0^2={s \over c^2}.
\end{equation}
This already shows that the $j$-invariant of $J_s$ is a non-constant  rational function of $s $, so our scheme is not isotrivial.

To construct an isomorphism with a Legendre curve, let now $\zeta\in \mathrm{PGL}_2$ send three of the four points \eqref{E.ccs} to $0,1,\infty$; we can take  $\zeta$  to be   defined   over $K(x_0,y_0)$, where we put $K:=\Q(c,\sqrt{1-c^2})$.  Let us carry out explicitly this construction, for which we have no bibliographical reference.

Choosing $x_0,y_0$ as in  \eqref{E.ccs}, let us put  for this argument $P_1=(x_0,y_0)$, $P_2=(-x_0,y_0)$, $P_3=(x_0,-y_0)$, $P_4=(-x_0,-y_0)$, and let us denote $z_i:=z(P_i)$, so for instance $z_1=y_0/(x_0-1)$. 

The cross ratio (for a certain ordering)  is given by
\begin{equation}\label{E.lambda}
{(z_1-z_2)(z_3-z_4)\over (z_1-z_4)(z_3-z_2)}=x_0^2=\lambda.
\end{equation}

A homography $\zeta$ as required, putting $\zeta_i:=\zeta(z_i)$, is given by
\begin{equation}\label{E.zeta}
\zeta(z)={z-z_2\over z-z_4}\left({z_3-z_4\over z_3-z_2}\right)={(x_0+1)z+y_0\over (x_0+1)z-y_0}x_0,\qquad  \zeta_1=\lambda,\ \zeta_2=0,\ \zeta_3=1,\ \zeta_4=\infty.
\end{equation}

\bigskip

Then  the function $\zeta\circ z\circ \pi$ from $\E_s$ to $\P_1$ (where $\pi$ is the projection $\E_s\to C$)  will send 
 the four points \eqref{E.ccs}  resp. to $\lambda, 0,1,\infty$. Note that $\zeta(z)$ is a cross-ratio of $z,z_2,z_3,z_4$.

  It is not difficult after  elimination from \eqref{E.Ys} to realize $\E_s$ as a ramified quadratic cover  of $\P_1$ through this function, branched  exactly above  $0,1,\infty,\lambda$ (in the above order), of index $2$.
  
  Explicitly, on elimination we find the equation for $t$ over $\Q(c,s,x)$ given by
  \begin{equation*}
  \left(c^2(s-1)x^2+(1-c^2)s\right)t^2+2(c^2-s)xt+\left(s-1+(1-c^2)x^2\right)=0.
  \end{equation*}
  The  discriminant is given by 
  \begin{equation*}
 { {\rm Discriminant}\over 4}=c^2(1-c^2)(1-s)(x^4+(1-{s\over c^2})x^2+s)=c^2(1-c^2)(1-s)(x^2-1)(x^2-{s\over c^2}).
  \end{equation*}
 Note that $(1-c^2)(1-x^2)=y^2$, and we may express $x$ in terms of $z$ in the last factor, which is $(x-x_0)(x+x_0)$.  After a few calculations we see that we may  thus recover $t$  (and $u$)  in the quadratic extension of   $\Q(c,s)(z)$ given by $\Q(c,s)(z,w)$, where 
 \begin{equation}\label{E.w}
 w^2=(s-1)\left(z^2-{y_0^2\over (x_0+1)^2}\right)\left(z^2-{y_0^2\over (x_0-1)^2}\right),
 \end{equation}
  and conversely, we may recover $w$ from $z,t$. 
 Indeed, we have
 \begin{equation*}
\left(c^2(s-1)x^2+(1-c^2)s\right)t=(s-c^2)x+ {cy_0\over z^2+1-c^2}w.
 \end{equation*}

We also  have $w=\pm \sqrt{s-1} \sqrt{\prod(z-z_i)}$, and  the function field of  the curve $\E_s$,  over the field  $\Q(c, \sqrt{1-c^2}, \sqrt{s-1},x_0,y_0)$, is obtained by adding  the element $\sqrt{\prod(z-z_i)}$. 

\medskip

   In conclusion, we find that  $\E_s$, and also $J_s$,  becomes isomorphic to the Legendre curve  
 \begin{equation}\label{E.leg}
 \L_s=L_\lambda: \qquad Y^2=X(X-1)(X-{s\over c^2}),\qquad s=c^2\lambda.
 \end{equation}
 where we use for the moment capital letters to avoid confusion with $x,y$ on $C$.  
 
  By the above formulae, a  field of definition of an isomorphism is found to be the extension 
 $\Q(c,\sqrt{1-c^2})(\sqrt{s-1},x_0,y_0)=K(\sqrt{s-1},x_0,y_0)$, where actually the $X$-coordinate is defined over $K(x_0,y_0)$.

 More explicitly, a point $(x,y)\times (t,u)$ on $\E_s$ in the model \eqref{E.Ys} goes under this isomorphism  to the point $(\zeta(z(x,y),\eta(x,y,t,u))\in\L_s$, where $\eta $ is a certain rational function  obtained as follows. We have
 \begin{equation*}
 \zeta(z)(\zeta(z)-1)(\zeta(z)-\lambda)={x_0(x_0-1)(x_0-\lambda)\over (z-z_4)^4(s-1)}w^2,
 \end{equation*}
 where $w$ is given by \eqref{E.w}. On using that  $x_0(x_0-1)(x_0-\lambda)=-x_0^2(x_0-1)^2$ we find
 \begin{equation}\label{E.expl.leg}
  \zeta(z)(\zeta(z)-1)(\zeta(z)-\lambda)=\left({\pm x_0(x_0-1)\over (z-z_4)^2\sqrt{1-s} }w\right)^2,
 \end{equation}
 so we may take $\eta$ as the expression under brackets on the right (after a choice of sign).

 By construction, the point at infinity for the Legendre curve corresponds to one of the four points \eqref{E.ccs}, namely $P_4=(-x_0,-y_0)$ which is defined over $K(x_0,y_0)$.   Of course, we have a wide choice for an origin on each $\E_s$, i.e. of a {\it zero section}, so that $\E_s$ becomes an elliptic curve (scheme). The present choice seems to us natural, because it depends on the points \eqref{E.ccs} and is also such  that we fall into the familiar framework provided by a Legendre form.

 \medskip
 
 {\tt About the field of definition}. As said, we have an isomorphism $\phi:\L_s\to \E_s$ defined over $K(\sqrt{1-s},x_0,y_0)$.  If $c$ is a given (real)  number and $s$ is a variable (over $\C$), this last field is a Galois  extension of $K(s)$ of degree $8$, with Galois group, denoted here $G$,  isomorphic to $(\Z/2)^3$, whereas $K(s)$  is a field of definition for  the curves $\E_s,\L_s$. Now, for $g\in G$, we can consider the automorphisms  of $\L_s$ given by $\phi_g:=(\phi^g)^{-1}\circ \phi$.  These form a $1$-cocycle for $G$ with values in Aut$(\mathcal{L}_s)$. Looking at the action on the points of order $2$ of $\L_s$, which correspond through  $\phi$ to  the four ramification points considered above, it is not difficult to prove that  the $\phi_g$ are precisely those automorphisms of the form $x\mapsto \pm x+t$, for $x\in \L_s$, where $t$ varies among the four points of order $2$. Also, if $g_0\in G$ fixes $K(x_0,y_0)$ but moves $\sqrt{1-s}$ then $\phi_{g_0}(x)=-x$. (See \cite{CZPonc}.) 
 
 We also note that the Jacobian of $\E_s$ over say $\C(s)$ would be an elliptic curve defined over $\C(s)$, so the appearance of $\sqrt{1-s}$ may appear strange. Indeed, it may be checked that the Jacobian is not the Legendre curve, but the twist of it through $\sqrt{1-s}$, given in practice by the same Legendre equation, but multiplied by $1-s$ on the left. However we have preferred to work with the standard Legendre model, at the cost of enlarging the field.

 \medskip
 
 \subsection{Review of generalities about the Legendre curve}  In this subsection we  review a few facts about the Legendre curve from a complex viewpoint, namely we analyse Weiestrass functions and periods, as $\lambda$ varies in $\P_1(\C)-\{0,1,\infty\}=\C-\{0,1\}$. We shall refer to Silverman's \cite{SilEC}
 and especially Husemoller's \cite{Hus} books. 
  
 \subsubsection{Weierstrass equations}\label{SSS.weier}   First, let us put the Legendre curve in {\it pure}  Weierstrass form.  Setting $U=X-{\lambda+1\over 3}$, $V=2Y$, we find from \eqref{E.leg} (recalling $s=c^2\lambda$),
 \begin{equation*}
 V^2=4U^3-{4\over 3}(\lambda^2-\lambda+1)U-{4\over 27}(\lambda-2)(\lambda+1)(2\lambda-1).
 \end{equation*}
 For $\lambda\in\C-\{0,1\}$ this is  indeed the equation of a complex elliptic curve in Weierstrass form and there is a unique lattice $\Lambda=\Lambda_\lambda\subset \C$ such that the corresponding Weierstrass coefficients $g_2(\Lambda),g_3(\Lambda)$ yield precisely the given equation, which can then be parametrized by $U=\wp_\Lambda(\mu)$, $V=\wp_\Lambda'(\mu)$ for a complex variable $\mu$.
 
 \subsubsection{Differentials}  Note that the differential $\d\mu$ corresponds to $\d \wp/\wp'=\d U/V$, and in turn to $\d X/2Y$  on the Legendre model \eqref{E.leg}. This is consistent with \cite{Hus}.
  
  \subsubsection{Caustics} For our billiard purposes we shall be mainly interested in the case of real $\lambda$. Actually,  our {\it real} caustics  will occur for $0<s<1$, corresponding to $0<\lambda<1/c^2$.  For $0<\lambda<1$ the caustic will be a hyperbola whereas for $1<\lambda<1/c^2$ it will be an ellipse. 
 
There is a degenerate case at $s=c^2$, i.e. $\lambda=1$, when the caustic becomes the horizontal segment  connecting the foci $(-c,0), (c,0)$.    It is very easy to realize that a billiard trajectory passing through some focus will alternatively pass through the two foci and will tend to become horizontal; it will become horizontal only when it is originally horizontal, with period $2$.

At $s=0,1$ again there is a degenerate situation: in the latter case the caustic coincides with the ellipse and the billiard trajectories will degenerate to points. The situation is more complicated in the former case, as will be seen in the Appendix.

 \subsubsection{Periods}\label{Periods}  We borrow mainly from \cite{Hus}, Ch. 9 (and see also the authors's book in progress \cite{CZPonc}). One may take generators for the lattice which depend locally analytically on $\lambda$, but have monodromy as we move $\lambda$ in $\C-\{0,1\}$.  In the region $\max(|\lambda|,|1-\lambda|)<1$,  we may pick the periods as given by
 \begin{equation*}
 \omega_1(\lambda)=i\pi F({1\over 2},{1\over 2},1,1-\lambda),\qquad \omega_2(\lambda)=\pi F({1\over 2},{1\over 2},1,\lambda),
 \end{equation*} 
 where $F({1\over 2},{1\over 2},1,\lambda)=\sum_{n=0}^\infty{-1/2\choose n}^2\lambda^n$ is a hypergeometric function. Note that for real $\lambda$ in the region the second period is real positive  whereas the first is purely imaginary; the possibility of choosing such periods depends (as is not difficult to prove) on the fact that the real points of the corresponding elliptic curve have two components. We have the formulae (\cite{Hus}, Ch. 9, Thm. 6.1) 
 \begin{equation}\label{E.periods}
 \omega_1(\lambda)=\int_{-\infty}^0{\d x\over y}\in i\R, \qquad \omega_2(\lambda)=\int_1^\infty  {\d x\over y}\in \R,
 \end{equation}

where we choose as $y$  the  square root: $y=\sqrt{x(x-1)(x-\lambda)}$ with positive imaginary resp. positive real part, resp. in the first and second case.  

For $|\lambda|<1$ we also have the formula $\omega_2(\lambda)=2\int_0^{\pi/2}(1-\lambda\sin^2\theta)^{-1/2}\d\theta$ (see \cite{Hus}, Thm. 5.9).  For $|1-\lambda|<1$ this yields $\omega_2(1-\lambda)=2\int_0^{\pi/2}(\sin^2\theta+\lambda\cos^2\theta)^{-1/2}\d\theta$, whence after approximating $\sin\theta,\cos\theta$ by $\theta,1$ near $\theta=0$, we easily obtain 
$\omega_2(1-\lambda)=-\log\lambda+O(1)$ as $\lambda\in\R$ tends to $0^+$.

The periods satisfy the Gauss-Legendre differential equation
\begin{equation*}
\Gamma(\omega)=\lambda(1-\lambda)D^2\omega+(1-2\lambda)D\omega-{1\over 4}\omega=0,\qquad D:={\d\over \d\lambda},
\end{equation*}
which also gives a possibility to obtain the analytic continuation. 

Using this differential equation we obtain that $D(\omega_1/\omega_2)=c(\omega_2^2\lambda(1-\lambda))^{-1}$, and by integration we find  $\omega_1(\lambda)= -\omega_2(\lambda)\log\lambda/\pi+$ analytic function in a neighbourhood of $0$, where the constant $c$ is found  by using the above asymptotic.  This yields  both the monodromy and the asymptotic at $0$ and similarly at $1$.  For instance the monodromy at $0$ is given by $(\omega_1,\omega_2)\to (\omega_1+2\omega_2,\omega_2)$.

\medskip

When $1<\lambda <1/c^2$ we can still use this representation, after a change of variables which we now explain. As in \cite{Hus}, the curve with parameter $\lambda $  is isomorphic to  the one with parameter  $\lambda':=1/\lambda$ under the isomorphism 
$X'=X^{-1}$, $Y'=Y(X^2\sqrt\lambda)^{-1}$, in the sense that the dashed coordinates satisfy the  equation with the dashed parameter if the former satisfy \eqref{E.leg}.

\subsubsection{Torsion points of order $2$, real points} \label{SSS.torsion} The torsion points of order  (exactly)  $2$ on the Legendre curve (with $\lambda\in\R$) are those with $X=0,1,\lambda$ (and $Y=0$). In a fundamental parallelogram for $\Lambda$ these  points  correspond to the $\omega_1/2, \omega_2/2$ and their sum, in some order. These points together with $0$ determine  a rectangle, of which the horizontal sides are sent to real points in the algebraic model, and the vertical sides to points with real abscissa. The   upper side must correspond to the compact connected component of real points in the affine plane, not meeting the origin, and this determines the picture. For instance, for $0<\lambda<1$ the lower right vertex $\omega_2/2$  is sent to $1$, and the upper vertices  $\omega_1/2, (\omega_1+\omega_2)/2$ are sent resp. to $0,\lambda$ (by connecting $1$ with $\lambda$). Instead, for $1<\lambda$, $\omega_2/2$ corresponds to $\lambda$.

 \medskip

\subsection{The billiard section on $\L_s$}   We can also give an explicit form to the billiard map, as a translation on the Legendre model \eqref{E.leg}. Let $\psi:\E_s\to\L_s$ be the inverse to $\phi$, and let us put 
 \begin{equation}\label{E.[iota]}
 [\iota]=\psi\circ \iota\circ\phi,\qquad  [\iota^*]:=\psi\circ\iota^*\circ\phi,\qquad [T]=\psi\circ T\circ\phi=[\iota^*]\circ [\iota].
 \end{equation}
 So, these are the previous maps, but viewed on the Legendre model. Therefore $[\iota],[\iota^*]$ are involutions on $\L_s$, each with some fixed point, hence of the form $[\iota](x)=\kappa-x$, $[\iota^*](x)=\kappa^*-x$ for $x\in\L_s$ and some $\kappa=\kappa_s,\kappa^*=\kappa^*_s\in\L_s$ (independent of $x$). 
 
 To find this explicitly, note that $\kappa=[\iota](0)$. On the other hand, $\phi(0)=(-x_0,-y_0)\in C$,   is one of the four points in $C\cap C_s$, so $x_0^2=s /c^2$, $y_0^2=(c^2-1)(s -c^2)/c^2$.   We have that $\iota(\phi(0))$ is $(x_0',y_0')\times \ell_0$, where $\ell_0$ is  (for the present argument) the tangent to $C_s$ at $(-x_0,-y_0)$.

   After some easy calculations one finds
\begin{equation*}\label{E.x0'}
x_0'=x_0\cdot {(1-2c^2)+c^2s\over (1+(c^2-2)s },
\end{equation*}
whereas $y_0'$ is found from the equation for $\ell_0$:  $(x_0/s )x+(y_0/(s -c^2))y=1$:
\begin{equation*}\label{E.y0'}
y_0'=y_0\cdot {1-c^2s\over 1+(c^2-2)s }.
\end{equation*}

   We have $\kappa=\psi((x_0',y_0')\times \ell_0)$.

Things are even simpler for $\kappa^*$:  in fact, thinking of a pair $(p,l)\in \E_s$, the involution $\iota^*$  does not act on the point $p$ on $C$, but exchanges the lines $l,l'$ through $p$,  tangent   to $C_s$. Now, since the relevant point $\phi(0)$  lies in $C\cap C_s$, the tangent line $\ell_0$ is unique, hence $\iota^*(\phi(0))=\phi(0)$ and $[\iota^*](0)=0$, which implies $\kappa^*=0$. 

In conclusion, the Billiard Map is represented by translation by $-\kappa=-\psi((x_0',y_0')\times \ell_0)$ on $\L_s$.  This is a section for our scheme, it  may be  also  thought as the linear equivalence-class of the divisor $(p',l')-(p,l)$ on $Y_s$ (which is independent of $(p,l)$ by the theorem of Poncelet).

\medskip

Let us now compute explicit coordinates for this section on $\L_s$, using the formulae given above.  We have no knowledge of such an explicit formula in the literature.

First, using the above equations, we find $z(x_0',y_0')=y_0'/(x_0'-1)=y_0(1-c^2s)(x_0(1-2c^2+c^2s)-1-(c^2-2)s)^{-1}$.  In turn, using \eqref{E.zeta}, after some calculations we find  that the $X$-coordinate of the billiard map (as a function of the caustic)  is given by
\begin{equation}\label{E.bmapX}
h(\lambda):=\zeta(z(x_0',y_0'))={(1-c^2)s\over c^2(1-s)}={(1-c^2)\lambda\over 1-c^2\lambda}.
\end{equation}
The $Y$-coordinate, denoted here  $k(\lambda)$,  may be found again using the above equations, or also directly from the Legendre equation. Since $h(\lambda)-1=(\lambda-1)/(1-s)$, $h(\lambda)-\lambda=c^2\lambda(\lambda-1)/(1-s)$, we find, 
\begin{equation}\label{E.bmapY}
k(\lambda)=\pm c\sqrt{1-c^2}{\lambda(1-\lambda)\over (1-s)\sqrt{1-s}},
\end{equation}
for some choice of the sign. As to this,  note that we may choose the sign arbitrarily for the $Y$-coordinate  in the choice of the isomorphism, and  such  choice reflects in the choice here. 

We shall put in the sequel $B(\lambda):=(h(\lambda),k(\lambda))\in L_\lambda$.

\begin{rem}\label{R.bmap} {\tt Shape of the billiard section}. 
The billiard map, represented by the billiard section $B$,  has turned out to have a very simple shape on the Legendre model. Indeed, on realising $\zeta$ as a cross-ratio one could predict {\it a priori} that $h(\lambda)$ had degree $1$, since the only pole could occur at $s=1$. Also, it turns out that $B(\lambda)$ may be expressed even more simply,  in terms of a section with {\it constant} $X$-coordinate.\footnote{Any such sections may be called  (a) `Masser  section', because Masser referred to them in the original formulation of problems which led to the results in \cite{MZ}, \cite{MZ2}, and subsequent papers.}  Indeed, we may easily check that
\begin{equation}\label{E.masser}
B(\lambda)=\pm ({1\over c^2}, {\sqrt{1-c^2}\over c^3}\sqrt{1-c^2\lambda})+(\lambda,0),
\end{equation}
where the sum is on $\L_s$ and the choice of the sign is at our disposal (but depending on the analogous  choice for $B(\lambda)$). Note that $(\lambda,0)$ is a section of order $2$; this formula then also simplifies some calculations with the Betti map (done in the Appendix). 

We give three quick  arguments for   the following easy but important fact:

\begin{prop}\label{P.nontors}
The billiard section  is non torsion.
\end{prop}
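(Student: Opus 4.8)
The plan is to exploit the very explicit formula \eqref{E.masser}, which expresses the billiard section $B(\lambda)$ as the sum of the two-torsion section $(\lambda,0)$ and the ``Masser section'' $M(\lambda):=(\tfrac1{c^2},\tfrac{\sqrt{1-c^2}}{c^3}\sqrt{1-c^2\lambda})$ with constant $X$-coordinate equal to $1/c^2$. Since $(\lambda,0)$ is torsion, $B$ is torsion if and only if $2M(\lambda)$ is torsion, and since $c\in(0,1)$ is a genuine constant (so $1/c^2\neq 0,1$ and $1/c^2$ is not identically equal to $\lambda$), the section $M$ has non-constant $Y$-coordinate and hence is a genuinely non-constant section of the Legendre scheme over $\Q(c)(\lambda)$.

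The cleanest of the ``three quick arguments'' I would give first is the specialization argument. Suppose for contradiction that $NB=O$ identically for some positive integer $N$. Then $B(\lambda_0)$ would be an $N$-torsion point on $L_{\lambda_0}$ for \emph{every} $\lambda_0\in\C\setminus\{0,1\}$. But the $X$-coordinate $h(\lambda)=(1-c^2)\lambda/(1-c^2\lambda)$ of \eqref{E.bmapX} is a non-constant rational function of $\lambda$, so it takes infinitely many distinct values as $\lambda_0$ varies; on the other hand, for a fixed $N$, the $N$-torsion points of all the curves $L_{\lambda_0}$ have $X$-coordinates lying in a set cut out by the $N$-division polynomial, and one checks (e.g. by a height or Galois argument, or simply because a non-torsion-specialization exists) that $h$ cannot be forced into torsion for all $\lambda_0$. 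More concretely: pick $\lambda_0$ algebraic such that $L_{\lambda_0}$ has a point of infinite order with $X$-coordinate $h(\lambda_0)$ — the theorem of Silverman--Tate (or just a direct check at one value, e.g. making $h(\lambda_0)$ transcendental or of positive canonical height) shows this is possible, giving the contradiction.

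A second, even shorter argument avoids specialization: work over the function field $k=\overline{\Q(c)}(\lambda)$ (or $\overline{\C(\lambda)}$) and invoke the Manin--Mazur / function-field Mordell--Lang statement that a non-isotrivial elliptic curve over a function field has only finitely many torsion points rational over that field; since the Legendre scheme is non-isotrivial (its $j$-invariant, equivalently $\lambda$, is non-constant, as already noted after \eqref{E.cr}) and $B$ is a section defined over $k$, if $B$ were torsion it would be one of these finitely many torsion sections, all of which have \emph{constant} $j$-incompatible behaviour — but $B$'s $X$-coordinate $h(\lambda)$ is visibly non-constant while every torsion section of bounded order has $X$-coordinate that is either constant or satisfies a division-polynomial identity incompatible with $h$. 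The quickest packaging: a torsion section would give a morphism from the $\lambda$-line to a torsion multisection, but torsion sections of the Legendre scheme have canonical height zero and degree considerations (or the explicit $2$-, $3$-division polynomials) rule out $h(\lambda)=(1-c^2)\lambda/(1-c^2\lambda)$.

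The third argument, which I expect is the one the authors favour, is purely via the Betti map or the logarithmic derivative: the elliptic logarithm of $B(\lambda)$, written in terms of the periods $\omega_1,\omega_2$ of \S\ref{Periods}, is a non-constant multivalued function of $\lambda$ whose monodromy (inherited from the Gauss--Legendre monodromy $(\omega_1,\omega_2)\mapsto(\omega_1+2\omega_2,\omega_2)$ at $0$) is incompatible with $B$ being a torsion section, since a torsion section has elliptic logarithm lying in $\tfrac1N(\Z\omega_1+\Z\omega_2)$ and hence constant coefficients against the period lattice — forcing $B$ constant in the Betti coordinates, contrary to the non-constancy of $h(\lambda)$. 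The main obstacle in all three approaches is the same and is genuinely mild: ruling out the possibility that the non-constant $X$-coordinate $h(\lambda)$ could still be an identically-torsion section, which one settles either by producing a single non-torsion specialization (using Silverman's specialization theorem, or an explicit value), or by the non-isotriviality of the scheme together with the standard fact that over a function field the torsion of a non-isotrivial elliptic curve is finite; either way the argument is short once \eqref{E.masser} and the non-constancy of $h$ are in hand.
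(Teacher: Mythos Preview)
Your three arguments point in reasonable directions but none is complete, and none matches the paper's.

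Your first (specialization) argument is circular as stated: Silverman--Tate \emph{assumes} the generic section is non-torsion in order to conclude that almost all specializations are, so you cannot invoke it here. Exhibiting one explicit non-torsion specialization would work, but you do not carry this out. Your second and third arguments share a subtler flaw: a non-constant $X$-coordinate does \emph{not} imply non-constant Betti coordinates or non-torsion --- the section $(\lambda,0)$ is $2$-torsion with non-constant $X$-coordinate and constant Betti map. So ``$h(\lambda)$ is non-constant'' is not by itself a contradiction, and you never actually verify the claimed division-polynomial incompatibility.

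The paper's three arguments are quite different. Two are billiard-geometric and entirely elementary: (i) as the elliptic caustic tends to $C$ (i.e.\ $s\to 1^-$) the billiard map tends to the identity pointwise, so by continuity no fixed iterate can be the identity; (ii) inscribed $m$-gons and $n$-gons of maximal perimeter are periodic billiard trajectories, so the section assumes torsion values of any two coprime orders $m,n\ge 3$, impossible if it were torsion of some fixed order. The third is the algebraic argument your sketches were circling but never reached: by \eqref{E.bmapY} the section requires $\sqrt{1-c^2\lambda}$, hence is defined only over an extension of the base \emph{ramified} at $\lambda=1/c^2$; since $1/c^2\neq 0,1,\infty$ this is a place of good reduction, whereas any torsion section is defined over an extension unramified at all places of good reduction. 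This one-line ramification criterion is the clean substitute for your unfinished specialization and division-polynomial checks.
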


{\it First argument}. Let us start with any point $p_0\in C)$; when the (elliptic) caustic tends to the ellipse $C$, i.e. for $s\to 1^-$, the billiard map tends to fix the point, so by continuity it is not possible that a fixed number of  iterations  of the map produces the identity, since the map itself is not the identity. \smallskip

{\it Second argument}.    Take two coprime integers $m,n\geq 3$ and consider an $m$-gon of maximal length among the $m$-gon inscribed in $C$; also consider an $n$-gon of maximal length. Both correspond to periodic billiard trajectories, and so are circumscribed around caustics. This implies  that the billiard map takes torsion values of distinct, actually coprime, orders $m,n$. Now, if the billiard section would be torsion, say of order $k$, then at every caustic it would give rise to periodic trajectories of length dividing $k$ (actually of exact order $k$, but this is not   needed).  From this contradiction we obtain that the section is non-torsion, and actually we obtain that it assumes values which are real  torsion points of arbitrary prescribed order $>1$.

\smallskip

{\it Third argument}. From the explicit equation \eqref{E.bmapY} it is apparent that the section can be defined only over a ramified extension of the base field, while if it were be torsion it would be defined over an unramified extension.
\qed

\smallskip

One may also show (using the Shioda-Tate formula or usual descent) that, provided $c^2\neq 0,1$,  the Mordell-Weil group of the Legendre curve over the field $\C(\lambda,\sqrt{1-c^2\lambda})=\C(\sqrt{1-s})$ has rank $1$ and    is generated modulo the $2$-torsion  by the Billiard section (or the Masser section). 
\end{rem}

 \subsection{Real points on $\E_s$ read on $\L_s$}   Let us fix a real $s$, $0<s<1$ and $s\neq c^2$. The curve $\E_s$ is then of genus $1$, and the real points $\E_s(\R)$ have two connected components.  Recall also that there is a group of four automorphisms of $\E_s$, without fixed points, acting on $\E_s(\R)$.

To describe the real points of $\E_s$,  let us distinguish between the cases when $\E_s$ is an ellipse and a hyperbola.

{\bf Case 1}: $C_s$ is an ellipse, i.e. $c^2<s$.  

Each real point is represented  by a pair $(p,l)$ where $p\in C(\R)\cap l$ and where $l$ is a line through $p$,  tangent to $C_s$. There are exactly two such  lines, corresponding to two points $(p,v)$ in the phase space; we cannot choose algebraically among these two lines, but over the reals we can select the line such that $v$  forms a smaller angle 
with $v_p$, where $v_p$ is the vector at $p$  tangent to $C$ and going in the clockwise direction. Alternatively, we may prescribe that the caustic lies on the right of $v$.  This choice yields the two real components.  Denoting, for given $v$, by $p'$  the other intersection of $l$ with $C$, this choice also yields a self-map of the real points of $C$ (depending on $s$): 
\begin{equation}\label{E.TC}
T_C=T_{C,s}:C(\R)\to C(\R):\qquad T_C(p)=p'.
\end{equation}

Note that if $\pi:\E_s\to C$ is the first projection, we have $T_c(\pi(x))=\pi(T(x))$, where $T$ as above is the billiard map.  
We may equivalently describe this as follows: the billiard map acts on $\E_s(\R)$ and it acts in fact on each of the two components (as follows from the above definitions). Each of the two components projects homeomorphically onto $C$ through $\pi$.  Restricting to one component this yields the map $T_C$ (and there is a similar map looking at the other component). 

We now want to locate the image of  the real points on $\E_s$ under the isomorphism $\psi:\E_s\to \L_s$. For this we recall equation \eqref{E.zeta} and the fact that, in the present Case 1, $x_0$ is real whereas $y_0$ is purely imaginary. Recall also that $z:C\to \P_1$ is an isomorphism defined over $\R$.  It follows that  $\zeta$ sends $\E_s(\R)$ precisely to the points in $\L_s$ having $X$-coordinate of modulus $|x_0|=\sqrt\lambda$.  \footnote{Note that we can see {\it a priori} that these points form two connected components, exchanged by the map $-1$ on $\L_s$, since $1<|x_0|<\lambda$ so   $\sqrt{u(u-1)(u-\lambda)}$ is well defined as $u$ travels through a circle of radius $|x_0|$.}  It is remarkable that these points form a group, and that their set is stabilised by translations by points of order $2$ and by translation by the billiard map. The interested reader will verify that on the torus picture for $\L_s$  these points are located, modulo the lattice, on the lines 
$\pm (\omega_1/4) +\R\omega_2$; this also corresponds to an identity $\wp(\mu)\wp(\mu+(\omega_1)/2)=|x_0|^2$, where now $\wp$ is the `translated' Weierstrass function for the $X$-coordinate in the Legendre form.  (That the points are located on horizontal lines follows at once on observing that  translation by the billiard map preserves these points.) 

\medskip

{\bf Case 2}: $C_s$ is a hyperbola, i.e. $s<c^2$.

Things are somewhat different now. First, $x_0,y_0$ are both real, and $P_1,\ldots,P_4$ are real points, dividing $C(\R)$ into four parts. Only the northern and southern part are relevant for the billiard shots having such a caustic.  These two parts lift  (through $\pi$) to two connected components of $\E_s$ and the billiard map now switches these components.   The shots which point right or left now do not give rise to distinct connected components: this would happen if we considered only the affine caustic, whereas in its projective completion the asymptotes constitute points which attach these components. 

Also, $\E_s(\R)$ now corresponds to the points in $\L_s$ with real abscissa, and we have already seen how they are represented in the torus picture.

\subsection{An invariant measure}  Given a caustic $C_s$, $s=c^2\lambda$, and assuming $s$ of good reduction, we have an associated elliptic curve $\L_s=L_\lambda$, corresponding to a complex torus $\Lambda_\lambda$. On this torus the complex differential $\d\mu $ induces a measure invariant by translation, that in turn induces an invariant  measure on the real points. We want to transport this last measure  to the ellipse $C$, and we can do this through the map $\zeta\circ z\circ p$ computed above. Up to a constant, the measure on $L_\lambda$  is $\d X/Y$, which equals 
$\d\zeta/\eta$, where $\eta^2=\zeta(\zeta-1)(\zeta-\lambda)$.  We can compute this from the above formulae \eqref{E.zeta}, \eqref{E.w} and find the measure, up to a constant (depending on $s$, i.e. on the caustic), given by
\begin{equation}\label{E.meas}
{\d z\over \sqrt{(z-z_1)(z-z_2)(z-z_3)(z-z_4)}}={\d z\over \sqrt{(z^2-{y_0^2\over (x_0+1)^2})(z^2-{y_0^2\over (x_0-1)^2})}}.
\end{equation}
This measure is invariant by the billiard map on $C$ (relative to a given caustic $C_s$).  Note that the expression (under square root) on the right involves only real numbers, and in fact one may check it is of constant sign for the relevant values of $z$: if the caustic is an ellipse then this is automatically positive,  for $y_0^2$ is real negative and $x_0$ is real; if the caustic is a hyperbola, this is of constant (negative) sign on the part of the ellipse in between the two branches of the hyperbola. Thus we may choose a sign so  that the measure is continuous. Also, the measure of $C$ becomes finite and nonzero, so that one may normalise   so that $C$ has measure $1$ (where in the case of a hyperbolic caustic we restrict to the part of $C$ mentioned above, i.e. the part touched by any billiard shot with that caustic).

\subsection{Other sections} Say that we have chosen a point $p_0\in\T$. Then a choice of a (unit) vector $v$ determines a billiard shot $(p_0,v)$. In turn, this corresponds to a caustic. Hence we have a map which associates to $v$ a caustic, and therefore a curve in the elliptic family. So we obtain a section of our family, now viewed over  the base which is  the space of possible vectors $v$. Of course if we look at the {\it real} billiard, this $v$ will be restricted to the unit circle, but it will be convenient to work with an algebraic base, for instance $\P_1$, regarded as the set of complex slopes of $v$.  Also, a choice of the direction will correspond to a quadratic equation, so if we want this to be well-defined in the algebraic sense, our  space will be in fact a quadratic cover of the former  base.
 
 Here is some more detail for this. Denote  $p_0=(a,b)\in\T$ and say  that our  billiard shot $(p_0,v)$ has slope $\xi$. 
Then the line $l: y=\xi(x-a)+b$ will be tangent to the relevant caustic.\footnote{The intersection with the caustic will occur at infinity in case this line  passes through the origin.}  By  the formulae above  the corresponding caustic shall be  $C_s$ where 
\begin{equation}\label{E.ssigma}
s ={c^2+(\xi a-b)^2\over \xi^2+1},\qquad (s -a^2)\xi^2+2ab\xi+(s -b^2-c^2)=0.
\end{equation} 

To express the coordinates of the points $p,p'$ of intersection of the line $l$ with $C$, note that the equation for $C$ combined with $y=\xi(x-a)+b$, gives $(1-c^2)x^2+(\xi(x-a)+b)^2=1-c^2$, so  the abscissas of $p,p'$ are given by
\begin{equation}\label{E.PP'}
(1-c^2+\xi^2)x^2+2\xi(b-\xi a)x+(b-\xi a)^2+c^2-1=0.
\end{equation}
which has discriminant given by $4(1-c^2)(\xi^2-(\xi a-b)^2+1-c^2)=4(1-c^2)((1-a^2)\xi^2+2ab\xi+1-c^2-b^2)=4(1-c^2)(1-s)(\xi^2+1)$.

Choosing  a solution  determines   the first point  where the  billiard trajectory meets $C$. This is not defined over $\C(\xi)$ but over  a quadratic extension  expressed by the equation. We obtain a section from the space of slopes to  our $\E$, actually defined in fact only over a quartic extension of $K_1(s)$, where $K_1$ may be taken as $K(a,b)$.  We shall explore this field extension in more detail during the proofs; we note that its degree (over $K_1(s)$) being equal to four is due to the fact that given an interior point $p$ and a caustic $C_s$, there are in general four shots from $p$ giving rise to a tangent to $C_s$: indeed, the tangent can be chosen in  two ways and for each tangent there is still a choice for the orientation of the shot.

\medskip

Note that these sections will give rise to sections of the Legendre model \eqref{E.leg}, however the field of definition will increase, so in practice the base will become a corresponding cover of $\P_1$.  On composing with the billiard map, we shall obtain sections (one for each point $p_0$), as functions of the {\it slopes} rather than the caustic (quantities that of course are related through \eqref{E.ssigma}).

\subsection{The phase space as an elliptic surface}\label{SS.phase-space}
  We present now an alternative way of obtaining an elliptic scheme, and actually a compactification (elliptic surface), this time  without extending the ground field (but extending scalars from $\R$ to $\C$). More details are given in Chapter 9 of our forthcoming book \cite{CZPonc} (but the essentials of the proofs of this paper do not require them).

We can identify the phase space using pairs $(p_1,p_2)\in C\times C$ instead of the pairs $(p,v)$ where $p\in C$ and $v$ is a (unit) vector: of course, $v$ is obtained from the pair $(p_1,p_2)$ by taking the normalized vector connecting $p_1$ to $p_2$. Whenever $p_1=p_2$, such a vector is the tangent vector. 
We obtain a rational map
\begin{equation*}
C\times C \dashrightarrow \P_1
\end{equation*}
associating to every pair $(p_1,p_2)$ the unique caustic tangent to the line connecting $p_1$ to $p_2$; the uniqueness  of this caustic follows from the linearity in $s$ of equation \eqref{E.dual} (in geometric terms: the fact that the dual family of caustics is a {\it pencil}  of conics).

The fibers of the projection are curves of bidegree $(2,2)$ on $C\times C\simeq \P_1\times \P_1$, so, whenever they are smooth, they have genus $1$. 

The above map is undefined at four points, namely the points $(p,p)$ where $p$ is one of the four complex points
\begin{equation}\label{E.4points}
\left( \pm \frac{1}{c}, \pm i\, \frac{1-c^2}{c}\right).
\end{equation}
Indeed, for each such point $p$, the tangent at $p$ to the billiard $C$ is tangent also to every other caustic $C_s$ (at points depending on $s$).

Blowing up these four points, all situated  on the diagonal of $C\times C$, we obtain another (smooth projective) surface, containing four distinguished exceptional divisors. The projection to $\P_1$ is still undefined at four new points, one on each of the four exceptional divisors. 
Blowing them up again, we obtain a new smooth projective surface $\mathcal{X}$, endowed with a well defined projection to the line, fitting in the diagram
\begin{equation}\label{E.diagramma}
\begin{tikzcd}
& \mathcal{X} \arrow[dl,swap] \arrow[dr] \\
C\times C &\dasharrow    & \P_1
\end{tikzcd}
 \end{equation} 
 The four exceptional divisors produced in the last blowing-up procedure are images of sections 
 $\P_1\to\mathcal{X}$. One can take one of them to be the zero section; then the other three are sections of order two. 
 
 Let us describe more concretely these sections. First note that on $C\times C$ one can replace  pairs $(p_1,p_2)$ by pairs $(p,l)$, where $p=p_1$ and $l$ is the line joining $p_1$ to $p_2$. While working with the four points \eqref{E.4points} which have been blown up on $C\times C$, it is better to use this second point of view. 
 Letting $p_0$ be one of the points \eqref{E.4points} of $C$, and   $l_0$ be the line through $p_0$ which is a common tangent to all the caustics, the zero-section can be described as the section associating to each caustic the pair $(p_0,l_0)$. 
 
 Note that by the adjunction formula the self-intersection of the zero section is $-1$, and this must be true of all the (images of) sections. (Indeed the canonical divisor on an elliptic surface lies in the class of a multiple of a fiber, in our  case the negative of a fiber, so has constant intersection product with every section; this proves that in the adjunction formula $2g(\Theta)-2= \Theta^2+K\cdot \Theta$, where $\Theta$ is the image of a section, $g(\Theta)$ its genus, i.e. the genus of the base, and $K$ a canonical divisor on the surface, the term $K\cdot \Theta$ is independent of $\Theta$ and so also the self-intersection of $\Theta$ must be constant.)
 
 \smallskip
 
The map $P\mapsto -P$ on $\mathcal{X}$, deriving from the group law on the fibers, corresponds on $C\times C$ to permuting the coordinates; in other words, to inverting the direction of the motion; this is the same as in the Legendre model. 

Translation by elements of order two is viewed in $C\times C$ as the effect of applying the symmetries $(x,y)\mapsto (\pm x,\pm y)$, where $x,y$ are the coordinates in the affine plane containing the ellipse $C$ of equation \eqref{E.C}. 
 
 \medskip

The fiber of $s=1$ in $C\times C$ is the diagonal, counted with multiplicity $2$ (recall that the fibers have bidegree $(2,2)$). The four blown-up points belong to this diagonal. The fiber of the same point $s=1$ viewed on the surface $\mathcal{X}$ is a five component divisor of the form $F=2\Phi+\Phi_0+\Phi_1+\Phi_2+\Phi_3$, where $\Phi$ is the strict transform in $\mathcal{X}$ of the diagonal of $C\times C$ and $\Phi_0,\ldots,\Phi_3$ are the strict transforms of the exceptional divisors occuring in the first blow-up. This configuration of divisors is named $I_0^*$ in Kodaira's classification \cite{Kod}.

The other singular fibers occur over the points $s=0,c^2,\infty$ and each of them consists in the union of two rational curves intersecting at two points, thus forming a configuration of type $I_2$ in the mentioned  Kodaira's classification.

Let us analyze in detail the case $s=c^2$. Recall that for $s$ real in the interval $0<s<c^2$, the caustics are hyperbolae, while for $c^2<s<1$ the caustics are ellipses. In the middle case $s=c^2$, the `caustic' defined by equation \eqref{E.caustic} becomes the horizontal line $y=0$. In this case, the billiard game consists in sending the ball alternatively to the two foci $(-c,0)$ and $(c,0)$. The genus one curve $\E_s\subset C\times C$ degenerates in the two rational curves consisting of pairs $(p_1,p_2)\in C\times C$ such that $p_1,(-c,0),p_2$ (resp. $p_1,(c,0),p_2$) are collinear.

To see the shape of the two remaining singular fibers, namely $s=\infty$ and $s=0$, note that the four lines which are common tangents to all the caustics, i.e. those drawn from the points \eqref{E.4points},  intersect in six points, naturally coupled in three pairs of points: one such pair is the pair of foci; the remaining two pairs (consisting in complex non-real points) give rise to two more pairs of rational curves defined in exactly  the same way.  

\smallskip 

The billiard map $\kappa:\P_1\to\mathcal{X}$ is a well defined section, generating  the Mordell-Weil group up to torsion (indeed, $2$-torsion). The image $-\kappa(\P_1)\subset \mathcal{X}$ can be described as follows: in the phase space consisting of pairs $(p,l)$ where $p$ is a point on the ellipse and $l$  a line containing $p$ as above,  the section is the curve $(p_0,l(s))$, $s\in \P_1$, where for each caustic $s$ the line $l(s)$ is the second tangent to $C_s$ drawn from $p_0$ (the first one beeing the common tangent $l_0$).

\smallskip

\subsubsection{The elliptic surface associated to an interior point}\label{SSS.ell-surface}
Take now a point $p$ in the interior of the billiard, not on the $x$-axis nor in the $y$-axis (in particular, not a focus). As already remarked, the algebraic curve $Y_p\subset C\times C \approx \mathcal{X}$ made of pairs $(p_1,p_2)\in C\times C$  with collinear $p_1,p_2,p$  is not a (rational) section, but rather a multi-section, precisely a degree four algebraic section: indeed, for a generic caustic there are two possible choices for the tangents drawn from $p$ and for each tangent, two possible directions.
To obtain a rational section from this curve, we must perform a   degree four (ramified) cover of the base. This cover can be described as follows:
recalling that $Y_p$ is sent by a degree four morphism to the curve parametrizing caustics (isomorphic to $\P_1$), we can form the fiber product
\begin{equation*}
\begin{matrix}
\mathcal{X}_p &\dashrightarrow  &\mathcal{X} \\
\downarrow &{}& \downarrow\\
 Y_p&\longrightarrow &\P_1
\end{matrix}
\end{equation*}
 The top arrow is not a morphism, because in taking a relatively minimal model for the elliptic surface $\mathcal{X}_p\to \mathcal{X}$ we have to contract the four pre-images of the curves $\Phi_0,\ldots,\Phi_3$. It turns out that now the fiber of $s=1$ becomes smooth, as it is the case for Legendre's model. In other terms, $s=1$ is a place of potentially good reduction, although of bad reduction, for the fibration $\mathcal{X}\to \P_1$. 
 
 \smallskip

The elliptic surface  can be birationally described as the set of quadruples $(p_1,p_2,q_1,q_2)\in C^4$ where $q_1,q_2,p$ are aligned and the line containinig them is tangent to the same caustic which is tangent to the line joining $p_1,p_2$. The projection to $\mathcal{X}\ni (p_1,p_2,q_1,q_2)\mapsto (q_1,q_2)\in L_p$ provides the elliptic fibration. 

By construction, this new elliptic surface $\mathcal{X}_p\to Y_p\simeq \P_1$ has one more section, say $\sigma_p$, associating to every point $(q_1,q_2)\in Y_p$ of the base  (recall that the caustics are now parametrized $4$ to $1$ by $Y_p$, i.e. every caustic appears four times)  the quadruple $(q_1,q_2,q_1,q_2)\in C\times C$. 

Although this is not obvious, the elliptic surface admits also  another section, denoted by $\tilde{\sigma}_p:\, L_p\to \mathcal{X}_p$, ``changing the tangent'' with respect to $\sigma_p$: this  follows from the fact that the covering $L_p\to \P_1$ is Galois, as explained in \S 9.5.3 of our book \cite{CZPonc} (see in particular  Theorem 9.5.6 therein).

From  the two sections $\sigma_p,\tilde{\sigma}_p$ one can produce two more sections, interchanging $p_1$ with $p_2$; these new sections just coincide with   $-\sigma_p$ and $-\tilde{\sigma}_p$, with respect to the group law just defined.
 
The  surface $\mathcal{X}_p$ is a K3-surface. Its Picard group may be checked to be  of rank $17$; the fibration $\mathcal{X}_p\to L_p$  possesses $12$ singular fibers, so by Shioda-Tate's formula its rank is $3$. Its Mordell-Weil group is generated up to torsion by the sections $\sigma_p, \tilde{\sigma}_p$ and the billiard section $\kappa$ (see Theorem 9.5.11 in \cite{CZPonc}).

Note that by changing the excentricity of the ellipse (i.e. the parameter $c$) and the position of the point $p$ we obtain a three-dimensional family of $K3$ surfaces of rank $\geq 17$. From the general theory of $K3$ surfaces (see e.g. Griffiths and Harris book \cite{GH}, chapter 4, page 590), this family is a full irreducible component in the relevant moduli space.

\subsection{Auxiliary results} In this subsection we list a few results used in  our proofs. Some of them  may be considered as special cases of the {\it Pink-Zilber conjectures} (see \cite{Z} and  \cite{Z3}) whereas the last one is taken from the theory of $S$-unit equations (see \cite{B-G}).  We believe that this recall may be helpful for some readers; some of  these theorems go back to some time ago, whereas others are more recent.

Our first result is taken from the paper \cite{CMZ} of  D. Masser with both of the authors. (As recalled in \cite{Z3}, it was first formulated by Shou-Wu Zhang as a question.) 

\begin{thm}\label{CMZ}  [\cite{CMZ}, Thm. 1.2] Let $\mathcal A\to {\mathcal C}$ be an abelian-surface scheme over a complex (affine) curve $\mathcal C$ and let $\sigma:\mathcal C\to\mathcal A$ be a section
whose image is not contained in any proper group subscheme. Then there are only finitely many points $x\in\mathcal C$ such that $\sigma(x)$ is torsion on the fiber $\mathcal A_x$.
\end{thm}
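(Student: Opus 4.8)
Let $\mathcal{A}\to\mathcal{C}$ be an abelian-surface scheme over a complex affine curve $\mathcal{C}$, and let $\sigma:\mathcal{C}\to\mathcal{A}$ be a section whose image is not contained in any proper group subscheme. Then there are only finitely many $x\in\mathcal{C}$ with $\sigma(x)$ torsion on $\mathcal{A}_x$.

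\textit{Plan.} I would follow the Pila--Zannier strategy: set up the Betti map of the section $\sigma$, count its rational values by o-minimality (the Pila--Wilkie theorem), and play this count against a lower bound coming from Galois theory and heights of torsion points, the whole being made possible by a non-degeneracy statement for the Betti map that is where the hypothesis on $\sigma$ gets used in an essential way. First I would reduce to the case where $\mathcal{C}$, $\mathcal{A}\to\mathcal{C}$ and $\sigma$ are defined over a number field $k$: all the data descend to a finitely generated subfield of $\mathbb{C}$, and by Masser's specialisation theorems one may specialise the transcendental parameters to suitable algebraic values so that, for all but finitely many specialisations, the property ``$\sigma(\mathcal{C})$ lies in no proper group subscheme'' is preserved, while the torsion locus can only grow; hence if the original torsion locus were infinite, so would be one over $\overline{\mathbb{Q}}$. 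From now on everything is over $k\subset\overline{\mathbb{Q}}$.

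The arithmetic input is the following. Suppose $x_0\in\mathcal{C}(\overline{\mathbb{Q}})$ with $\sigma(x_0)$ torsion of exact order $N$. Since $\sigma$ is a $k$-morphism, $\sigma(x_0)\in\mathcal{A}_{x_0}(k(x_0))$, so $k(x_0)$ supports a point of order $N$ on the fibre $\mathcal{A}_{x_0}$; using the uniform lower bounds for the degree of torsion fields of the members of a fixed abelian-surface scheme (via the Masser--W\"ustholz isogeny estimates together with the non-isotriviality/monodromy of the family — this replaces Merel's theorem, which is unavailable in dimension two), one gets $[k(x_0):k]\gg N^{\delta}$ for some absolute $\delta>0$. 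All $\Gal(\overline{\mathbb{Q}}/k)$-conjugates of $x_0$ are again points where $\sigma$ has order $N$, so there are $\gg N^{\delta}$ of them. Moreover these points all have bounded height: since $\sigma$ is non-torsion, the fibrewise N\'eron--Tate height $x\mapsto \hat h_{\mathcal{A}_x}(\sigma(x))$ is, up to $O(1)$, a Weil height on the smooth completion of $\mathcal{C}$ (Silverman--Tate), and a torsion value has zero canonical height; hence all these conjugates lie in a fixed compact $K\subset\mathcal{C}(\mathbb{C})$ (near the finitely many bad fibres one argues separately, controlling the logarithmic growth of the periods).

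Next I would run the counting. Cover $K$ by finitely many discs $\Delta_j$ on which the local system $R^{1}(\mathcal{A}/\mathcal{C})$ trivialises; over each $\Delta_j$ choose holomorphic period vectors $\omega_1(x),\dots,\omega_4(x)$ of $H_1(\mathcal{A}_x,\mathbb{Z})$, write $\sigma(x)=\sum_{i=1}^{4} b_i(x)\,\omega_i(x)$ with $b_i(x)\in\mathbb{R}$ real-analytic, and form the Betti map $\beta_j=(b_1,\dots,b_4)\colon\Delta_j\to(\mathbb{R}/\mathbb{Z})^{4}$. Its graph is definable in the o-minimal structure $\mathbb{R}_{\mathrm{an}}$ (the periods being restricted-analytic on slightly smaller discs), and $\sigma(x)$ is torsion of order $N$ exactly when $\beta_j(x)$ is a rational point of denominator $N$, hence of height $\asymp N$. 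Some disc $\Delta_j$ carries $\gg N^{\delta}$ of the conjugates of $x_0$, i.e.\ $\gg N^{\delta}$ rational points of height $\ll N$ on the graph of $\beta_j$. By the Pila--Wilkie theorem, the number of such points lying off the algebraic part of this definable set is $\ll_{\varepsilon} N^{\varepsilon}$; taking $\varepsilon<\delta$, for $N$ large a positive-dimensional connected semialgebraic piece of the graph must contain $\gg N^{\delta}$ of them.

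The hard part — and the genuinely new point compared with the elliptic case — is to rule out such a semialgebraic piece, which amounts to a non-degeneracy statement for $\beta_j$: since $\dim_{\mathbb{C}}\mathcal{C}=1$ the Betti map has real rank at most $2$, and one must show that its rank cannot drop to $\le 1$ on an open set, equivalently that the graph of $\beta$ has empty algebraic part. This is precisely where the hypothesis enters: a positive-dimensional semialgebraic subset of the graph would mean that along a real-analytic arc in $\mathcal{C}(\mathbb{C})$ the Betti coordinates of $\sigma$ satisfy an algebraic relation, and via the monodromy of the abelian-surface scheme (and an Ax--Lindemann / Manin-kernel type argument, plus the transcendence of the periods as non-algebraic solutions of the Picard--Fuchs system) this forces $\sigma(\mathcal{C})$ into a proper group subscheme of $\mathcal{A}$, contrary to assumption. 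Granting this ``Betti rank versus orbit dimension'' dichotomy, the semialgebraic piece above cannot exist, so $N$ is bounded; since for each bounded $N$ the set $\{x:\ N\sigma(x)=0\}$ is finite, the theorem follows. I expect the non-degeneracy of the Betti map to be the main obstacle, with the uniform Galois lower bound for torsion on the fibres a close second.
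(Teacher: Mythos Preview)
The paper does not give its own proof of this theorem: it is listed among the auxiliary results and is quoted from \cite{CMZ}. The only discussion the paper offers is the short paragraph after Theorem~\ref{T.BMZ}, which says that the proof ``depend[s] on an analytic description of the abelian varieties\dots as complex tori'', expresses the section in ``Betti coordinates'', and then ``uses counting theorems for rational points in transcendental varieties to prove that if these relations hold for values at points of large degree, then they must come from a geometric relation''. Your outline is exactly this Pila--Zannier strategy, spelled out in appropriate detail (Betti map, definability, Pila--Wilkie, Galois/height lower bound, and the Betti non-degeneracy step where the hypothesis on $\sigma$ enters). So your proposal is correct and agrees with the approach the paper describes.

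One remark worth making. You begin by specialising from $\mathbb{C}$ down to a number field via Masser-type specialisation, and then run the arithmetic argument over $\overline{\mathbb{Q}}$. The paper explicitly notes that ``previous results concerned only the case of $\overline{\mathbb{Q}}$ as a ground field'' and that the point of \cite{CMZ} was precisely to work over $\mathbb{C}$; there the argument is carried out directly in the complex setting rather than by reduction to $\overline{\mathbb{Q}}$. Your specialisation step is legitimate (distinct torsion points, being solutions of the algebraic equations $N\sigma=0$, survive a generic specialisation, and the ``not in a proper group subscheme'' condition is Zariski-open on the parameter space), but it does route the problem back through the earlier $\overline{\mathbb{Q}}$-case rather than reproving the genuinely new content of \cite{CMZ}. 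Either way the Betti non-degeneracy you flag as ``the main obstacle'' is indeed the crux.
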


Previous results concerned only the case of $\overline\Q$ as a ground field. After intermediate progress,  this result, if we limit to the ground field $\overline\Q$, was recently extended by F. Barroero and L. Capuano, to cover not merely torsion points but also linear relations. They prove in \cite{B-C}, Thm. 1.1, a result which immediately implies the following:

\begin{thm} \label{B-C} [\cite{B-C}, Thm. 1.2] Let $\mathcal A\to {\mathcal C}$ be an abelian  scheme over a(n affine) curve $\mathcal C$ defined over a number field and let $\sigma:\mathcal C\to\mathcal A$ be a section
whose image is not contained in any proper group subscheme. Then  the intersection of $\sigma({\mathcal C})$ with the  union of all subgroup schemes of  $\mathcal A$ of codimension $\ge 2$ is a finite set.
\end{thm}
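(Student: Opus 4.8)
The statement is a relative Manin--Mumford / unlikely-intersection theorem, and the natural line of attack is the Pila--Zannier strategy, in the form pioneered by Masser and Zannier (its abelian-surface torsion case being exactly Theorem \ref{CMZ} above). The plan is as follows. First I would reduce to working over a number field $k$ with $\mathcal C$ affine and smooth, $\mathcal A\to\mathcal C$ an abelian scheme of relative dimension $g$, and $\sigma$ a section whose image lies in no proper subgroup scheme; write $\Sigma\subset\mathcal C(\overline\Q)$ for the set of $x$ with $\sigma(x)$ contained in some subgroup scheme of codimension $\ge 2$. Over $\C$ pick a simply connected $U\subseteq\mathcal C(\C)$ together with a flat real basis of the period lattices, giving the \emph{Betti map} $\beta_U\colon U\to(\R/\Z)^{2g}$ recording the coordinates of $\sigma(x)$ in $\mathcal A_x\simeq\C^g/\Lambda_x$ relative to this basis. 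The relevant dictionary is: $\sigma(x)$ lies in a codimension-$\ge 2$ algebraic subgroup of $\mathcal A_x$ precisely when a lift $\tilde\beta_U(x)\in\R^{2g}$ lies in a $\Q$-rational affine subspace of $\R^{2g}$ of codimension $\ge 2$ (abelian-subvariety parts in fact contribute more relations, torsion contributes a rational point). So it suffices to bound, for each $U$, the number of such $x$.

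Second, I would establish that $\Sigma$ has bounded Weil height in $\mathcal C$ with respect to a fixed projective model. For the sub-case of torsion sections this is classical: by Silverman--Tate specialisation $\hat h_{\mathcal A_x}(\sigma(x))=c\,h(x)+o(h(x))$ with $c>0$ since $\sigma$ is non-torsion, while torsion points have zero canonical height. For genuinely positive-dimensional subgroups of codimension $\ge 2$ one needs the bounded-height phenomenon of Habegger, equivalently the non-degeneracy of the Betti map established (in the curve-base case) by Corvaja--Masser--Zannier and André--Corvaja--Zannier, which forces $\hat h_{\mathcal A_x}(\sigma(x))$ to grow at least like a positive power of $h(x)$ outside a bounded set and is thus incompatible with $\sigma(x)$ lying in a ``small'' subgroup once $h(x)$ is large. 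The outcome is a uniform bound $h(x)\le H_0$ on $\Sigma$, which in particular keeps the relevant fibres $\mathcal A_x$ inside a fixed family of bounded ``size''.

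Third comes the o-minimal counting. By Peterzil--Starchenko the Weierstrass uniformisation of $\mathcal A$ restricted to a fundamental set for the $\Lambda_x$ is definable in $\R_{\mathrm{an},\exp}$, so the graph of $\beta_U$ cut down to such a fundamental set is a definable set $Z$ of dimension $\le\dim\mathcal C$. A point $x\in\Sigma\cap U$ with $[k(x):k]\le D$ gives a $\Q$-rational affine subspace of codimension $\ge 2$ meeting $Z$, of ``complexity'' $T=T(x)$ (roughly the order of the torsion part plus the height of the linear relations). The Pila--Wilkie counting theorem, in the version for points lying on rational linear subspaces and after removing the semialgebraic locus $Z^{\mathrm{alg}}$, bounds the number of such $x$ of complexity $\le T$ by $\ll_\varepsilon T^\varepsilon$. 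In the opposite direction, lying in a codimension-$\ge 2$ subgroup of complexity $T$ over $\overline\Q$, together with the bounded-height fact from the previous step, forces --- via the lower bounds of Masser and David for the degree of torsion and division points on abelian varieties over number fields --- that $x$ has $\gg T^\delta$ conjugates over $k$, each producing a subspace of the same complexity $T$ meeting $Z$. Choosing $\varepsilon<\delta$ and $T$ large yields a contradiction; hence $\Sigma\setminus\pi(Z^{\mathrm{alg}})$ is finite, and the finitely many remaining relation-types are disposed of the same way.

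Finally one must handle the semialgebraic part: a positive-dimensional semialgebraic arc inside the image of $\beta_U$ cannot occur, since it would force $\sigma(\mathcal C)$ into a proper subgroup scheme, against hypothesis. Over a curve base this can be extracted from a monodromy argument --- the image of the monodromy representation on $\Lambda$, extended by the class of $\sigma$ in the appropriate mixed local system, has no invariant proper $\Q$-subspace precisely because $\sigma$ lies in no proper subgroup scheme --- or invoked from Gao's relative Ax--Schanuel theorem. This empties $Z^{\mathrm{alg}}$ and completes the proof. In my view the main obstacle is the height bound of the second step for genuinely positive-dimensional subgroups: it is the input specific to the codimension-$\ge 2$ setting (and absent in Manin--Mumford, where torsion points have height zero for free), and it rests on the deep non-degeneracy results for the Betti map; the counting step is by now routine, and the functional-transcendence ingredient, though substantial, is available off the shelf.
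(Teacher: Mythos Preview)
The paper does not give its own proof of this statement: Theorem~\ref{B-C} is quoted as an auxiliary result from Barroero--Capuano \cite{B-C}, and the paper only adds a few lines sketching the general Pila--Zannier method (Betti coordinates, counting rational points on transcendental sets, Galois-theoretic lower bounds). Your proposal is a faithful and reasonably detailed expansion of exactly that strategy, which is indeed how \cite{B-C} proceeds, so there is nothing to compare beyond noting that you have supplied what the paper deliberately outsources.

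One small caution on your second step: the bounded-height input and the non-degeneracy of the Betti map are related but not ``equivalent'' as you write. In \cite{B-C} the height bound for points satisfying two independent relations comes from R\'emond's generalised Vojta/Mumford inequalities (building on Habegger's work in the constant case), not directly from Betti non-degeneracy; the latter is rather what underlies the Silverman--Tate comparison $\hat h_{\mathcal A_x}(\sigma(x))\sim c\,h(x)$ with $c>0$. Your overall architecture is correct, but if you were to write this out in full you would want to separate these two ingredients.
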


By {\it subgroup scheme} we tacitly mean that it is surjective onto  the base $\mathcal C$. Contrary to the previous result, here the field of definition is assumed to be $\overline\Q$. It is probable that the methods of \cite{CMZ} (or other methods) allow to replace this with $\C$, but this has not yet been formally proved. So in applying this resul we shall tacitly work over $\overline\Q$. 

It is worth mentioning that before this general theorem, a similar but  weaker conclusion (together with  other results immaterial here) was proved (with partially independent methods) as Theorem 1.1 of the paper \cite{GHT} of Ghioca, L. Hsia and Tucker: this assumed  $\Aa$ equal to a (fiber) product of two elliptic schemes,  and moreover  restricting to certain special subgroup  schemes,  but would be sufficient for our applications  here (though probably not for generalisations of them). 

\medskip

The two previous results have their origin in the following one, previously a conjecture by Lang, proved in  the sixties by Ihara, Serre and Tate (independently). 

 \begin{thm}\label{T.torsion}
 Let $C\subset\G_{\mathrm m}^2$ be an irreducible algebraic curve and $\Gamma\subset\G_{\mathrm m}^2(\C)$ be the torsion subgroup of the torus. If $C\cap \Gamma$ is infinite, then $C$ is a translate of a subtorus by a torsion point.
 \end{thm}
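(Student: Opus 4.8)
The statement is the classical Manin–Mumford theorem for $\G_{\mathrm m}^2$ (the Ihara–Serre–Tate result), so the plan is to reproduce one of the standard Galois-theoretic arguments. First I would normalize: after translating by a torsion point of $C$ (if $C\cap\Gamma$ is infinite, $C$ contains at least one torsion point) we may assume the point $(1,1)$ lies on $C$, so it suffices to show $C$ is a subtorus, i.e. that the (reduced) polynomial $f(X,Y)$ cutting out $C$ is a binomial $X^aY^b - 1$ up to a unit. Fix a number field $k$ over which $C$ is defined, and let $S$ be the finite set of places of $k$ dividing the discriminant and leading coefficients of $f$. For a prime $\ell$ of good reduction (outside $S$) with residue characteristic $p$, the Frobenius $\mathrm{Frob}_\ell$ acts on the group $\mu_{p^\infty}^2$ of torsion points of order prime to $p$ essentially as raising to the $q$-th power, $q=N\ell$; more precisely, on $\mu_N$ for $(N,p)=1$ it acts as $\zeta\mapsto\zeta^q$.

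The key step is the following: if $\zeta=(\zeta_1,\zeta_2)\in C\cap\Gamma$ has order $N$ coprime to $p$, then $\zeta^q=(\zeta_1^q,\zeta_2^q)$ is the Galois conjugate $\mathrm{Frob}_\ell(\zeta)$, and since $C$ is defined over $k$ (indeed over the fixed field), $\zeta^q\in C$ as well. Thus $C$ contains both $\zeta$ and $\zeta^q$. Now one exploits the fact that a curve meets a fixed curve in boundedly many points: pick an auxiliary power-map. Concretely, the map $[q]:\G_{\mathrm m}^2\to\G_{\mathrm m}^2$, $(x,y)\mapsto(x^q,y^q)$, sends $C$ to another curve $C^{(q)}$ of the same (bounded) degree, and $[q]$ has degree $q^2$; the preimage $[q]^{-1}(C^{(q)})$ contains $C$ but also $q^2-1$ other translates of $C$ by $q$-torsion. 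If $C\cap\Gamma$ were infinite, one chooses two distinct good primes $\ell,\ell'$ with residue characteristics $p\ne p'$ and Frobenii acting as $[q]$ and $[q']$; an infinite subset of torsion points has order prime to $pp'$ and on these points $[q]$ and $[q']$ must both carry $C$ into itself (because infinitely many points of $C$ are sent by each map to points of $C$, and $C\cap C^{(q)}$ is either finite or forces $C=C^{(q)}$). So $C$ is stable under $(x,y)\mapsto(x^q,y^q)$ and under $(x,y)\mapsto(x^{q'},y^{q'})$.

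Finally I would extract the subtorus structure from this stability. If $f(X,Y)=\sum_{(i,j)}a_{ij}X^iY^j$ is the defining polynomial (up to scalar), invariance of $C$ under $(x,y)\mapsto(x^q,y^q)$ means $f(X^q,Y^q)$ is divisible by $f(X,Y)$; comparing degrees, $f(X^q,Y^q)=c\cdot\prod_{\zeta}f(\zeta_1 X,\zeta_2 Y)$ over the $q^2$ translates, and requiring this to have $C$ as a component for two coprime exponents $q,q'$ forces the Newton polygon of $f$ to be a single segment, i.e. $f$ is (a unit times) $X^aY^b-\gamma$ for some constant $\gamma$; since $(1,1)\in C$, $\gamma=1$ and $C:\ x^ay^b=1$ is a coset of a subtorus. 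Undoing the initial translation gives the statement. The main obstacle is the bookkeeping in the last step — verifying rigorously that self-stability under two multiplicatively independent power maps forces the binomial shape — together with ensuring enough good primes with the needed independence of residue characteristics; both are standard but require care. (Alternatively one can cite the original Ihara–Serre–Tate argument or Lang's, which is what the surrounding text effectively does.)
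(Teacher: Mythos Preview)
The paper does not actually prove Theorem~\ref{T.torsion}: it is stated as a known auxiliary result, attributed to Ihara, Serre and Tate, and the only remark on its proof is that it ``is more elementary and still uses, albeit in a simpler way, the Galois action on torsion points.'' Your sketch is precisely the standard Galois/Frobenius argument the paper alludes to, so there is nothing to compare.

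Two small points worth tightening if you flesh this out. First, the statement is over $\C$, so you should begin with the (routine) specialization step reducing to the case where $C$ is defined over a number field. Second, to deduce $[q](C)=C$ you need infinitely many torsion points on $C$ of order prime to the residue characteristic $p$; this is not automatic from ``$C\cap\Gamma$ infinite'' for a single $p$, and the usual fix is either to split each torsion point into its $p$-part and prime-to-$p$ part and handle them separately, or to invoke Lang's lemma producing, for each torsion point of order $n$, a Galois automorphism acting as a sufficiently large power map. Your final paragraph already flags this kind of bookkeeping, so the plan is sound.
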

 
Viceversa, it is clear that if $C$ is a subtorus, or more generally a torsion translate of a subtorus, then $C$ contains infinitely many points with coordinates in $\Gamma$.

The above theorem was generalized to curves in abelian varieties, and later to subvarieties of semi-abelian varieties. Theorem \ref{CMZ} can be viewed as the `relative' version of the generalization of Theorem \ref{T.torsion} to abelian surfaces.
\medskip

We state one further result in this context, due to M. Laurent and useful for the proofs of  Theorem \ref{T.P_2} and for some results on circular billiard.. It arose from results of Siegel, Mahler, Lang, Schmidt, together with the previous ones on torsion points. It solves  the   {\it Mordell-Lang conjecture for algebraic tori}:

\begin{thm}\label{T.tori}
Let $\Gamma\subset \G_{\mathrm{m}}^n(\C)$ be a multiplicative group of finite $\Q$-rank. Let $\Sigma\subset \Gamma$ be any subset. The Zariski-closure of $\Sigma$ in $\G_{\rm m}^n$ is a finite union of translates of algebraic subgroups.
\end{thm}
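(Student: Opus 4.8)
This is M.~Laurent's theorem --- the Mordell--Lang conjecture for algebraic tori --- and my plan is to reduce it, through a chain of by-now-standard steps, to the two Diophantine inputs already present in the text: the torsion case, which rests on Theorem~\ref{T.torsion}, and the finiteness theorem for $S$-unit equations, a consequence of Schmidt's Subspace Theorem (see \cite{B-G}). First I would set up the reductions. It is convenient to prove the formally stronger ``coset form'': for \emph{every} closed subvariety $X\subseteq\G_{\mathrm m}^n$, the set $X\cap\Gamma$ is a finite union of cosets of subgroups of $\Gamma$ contained in $X$; this yields the statement on Zariski closures (take $X=\overline{\Sigma}$) and behaves well under the limiting arguments below. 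Next I would pass to $\Gamma$ \emph{finitely generated}: picking multiplicatively independent $\gamma_1,\dots,\gamma_r\in\Gamma$ with $r=\mathrm{rank}\,\Gamma$ gives $\Gamma\subseteq\sqrt{\langle\gamma_1,\dots,\gamma_r\rangle\cdot\mu_\infty}=\bigcup_m\Gamma_m$, where $\Gamma_m:=\sqrt[m]{\langle\gamma_1,\dots,\gamma_r\rangle\cdot\mu_m}$ is finitely generated and increasing in $m$, so a Noetherian argument lets one treat each $\Gamma_m$ separately. A specialisation argument then replaces $\C$ by $\overline\Q$ (the finitely many group elements that intervene generate a finitely generated field, over which one specialises preserving all multiplicative relations and the ideal of $X$). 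Finally I would take $X$ irreducible, $X\cap\Gamma$ Zariski dense in $X$, and --- after shrinking $\G_{\mathrm m}^n$ to a subtorus and translating by an element of $\Gamma$ --- arrange that $X$ lies in no proper torus coset, and (quotienting by $\mathrm{Stab}(X)$ and invoking the inductive hypothesis otherwise) that $\mathrm{Stab}(X)=\{g:gX=X\}$ is finite; the target is then $X=\G_{\mathrm m}^n$, proved by induction on a complexity measure of $X$ (its dimension, together with the number of monomials in its defining equations).

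For the base case $\mathrm{rank}\,\Gamma=0$ we have $\Gamma\subseteq\mu_\infty^n$, the Manin--Mumford situation for $\G_{\mathrm m}^n$; I would deduce it from Theorem~\ref{T.torsion} by induction on $n$, combined with the action of the cyclotomic Galois group on the conjugates of a torsion point (which, together with a B\'ezout degree bound, forces the relevant torsion cosets to appear), and under the normalisations above this gives $X=\G_{\mathrm m}^n$. For $\mathrm{rank}\,\Gamma=r\ge1$ I would parametrise a general point of $\Gamma$ as $\gamma^a\zeta$, where $a=(a_1,\dots,a_r)\in\Z^r$, $\gamma^a:=\gamma_1^{a_1}\cdots\gamma_r^{a_r}$, and $\zeta$ lies in the (finite) torsion part. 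Substituting into the finitely many defining equations $f=\sum_\alpha c_\alpha\x^\alpha=0$ of $X$ turns each of them, for each fixed $\zeta$, into a relation $\sum_\alpha (c_\alpha\zeta^\alpha)\,\beta_\alpha^{\,a}=0$ of bounded length whose terms $\beta_\alpha^{\,a}$ are exponential functions of $a$ --- an $S$-unit equation in disguise. By the structure of the solution set of such equations (a consequence of the Subspace Theorem, see \cite{B-G}), the solutions $a$ form a finite set together with a finite union of cosets of proper subgroups of $\Z^r$, on each of which some proper subsum of $f$ vanishes identically; there $f$ restricts to a strictly shorter relation, and the corresponding part of $X\cap\Gamma$ sits inside a proper subvariety of $X$ (cut out by an additional monomial relation --- a proper torus coset --- or corresponding to a drop in the effective rank). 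The inductive hypothesis describes each of these parts as a finite union of cosets; since $X$ is irreducible and equals the Zariski closure of $X\cap\Gamma$, this forces $X$ itself to be a single coset, i.e.\ $X=\G_{\mathrm m}^n$, closing the induction.

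The hard part, as always in this circle of ideas, is the $S$-unit equation input --- equivalently, the reduction of the whole problem to Schmidt's Subspace Theorem in the multidimensional form due to Schlickewei and Evertse --- which I would simply invoke. The remaining difficulty is organisational: the passage from an arbitrary subvariety to systems of $S$-unit equations must be arranged so that the ``degenerate'' solutions are provably confined to strictly simpler data (a proper subtorus, a shorter relation, or a group of smaller rank), since this is exactly what makes the induction well-founded; the $\C\to\overline\Q$ specialisation and the reduction to finitely generated $\Gamma$ also need some care but are routine.
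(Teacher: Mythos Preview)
The paper does not actually prove this theorem: it is stated in \S\ref{S.pre} as an auxiliary result due to Laurent, with the proof simply referred to \cite{B-G} and the remark that it ``needs the Schmidt Subspace Theorem in Diophantine Approximation.'' So there is no ``paper's own proof'' to compare against.

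That said, your sketch is essentially the standard route to Laurent's theorem (and is the one presented in \cite{B-G}): reduce to finitely generated $\Gamma$ over a number field, handle the rank-zero case via the toric Manin--Mumford statement (your Theorem~\ref{T.torsion} and its higher-dimensional extension), and for positive rank convert the defining equations of $X$ into $S$-unit equations whose nondegenerate solutions are finite by the Evertse--Schlickewei--Schmidt theorem, with degenerate solutions pushed into strictly simpler subproblems. The outline is sound; the only points that would need more care in a full write-up are (i) the Noetherian/limiting argument in passing from the finitely generated $\Gamma_m$ back to the division group, and (ii) making precise the induction measure so that the ``vanishing subsum'' branches genuinely decrease it. But as a proof plan this is correct and matches what the paper is citing.
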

\smallskip

An important instance  of the above theorem is represented by the special case of a finitely generated group $\Gamma$. This is the case  appearing in the proof of Theorem \ref{T.P_2}. The case when $\Gamma$ coincides with  the torsion subgroup generalizes Theorem \ref{T.torsion} to higher dimensions and is used in the proof of Theorem \ref{T.buca} for circular billiards. This torsion case was proved independently by Sarnak-Adams \cite{SA}.
\smallskip

Our last auxiliary result is due to E. Bombieri, D. Masser and the second author:

\begin{thm}\label{T.BMZ}
Let $C\subset\G_{\mathrm{m}}^n$ be an irreducible algebraic curve. Suppose it is not contained in any translate of an algebraic subgroup of the torus. Then the union of the sets of the form $C\cap H$, where $H\subset \G_{\mathrm{m}}^n$ is a codimension $2$ subgroup is finite.
\end{thm}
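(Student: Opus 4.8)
The plan is to follow the Bombieri--Masser--Zannier argument, whose two pillars are a \emph{bounded height} statement and a descent to the two-dimensional torus. First some harmless reductions. Every algebraic subgroup of $\G_{\mathrm m}^n$ is cut out by monomial equations $\prod x_i^{a_i}=\zeta$ with $\zeta\in\mu_\infty$, hence defined over $\bQ$; so, after spreading out and specialising $C$ (a generic specialisation preserves the hypothesis of not lying in a coset of a proper subtorus), one may assume $C$ is defined over a number field $k$. When $n=2$ a subgroup of codimension $2$ is just a finite group of torsion points, so the assertion is exactly Theorem~\ref{T.torsion} applied to the irreducible curve $C$, which by hypothesis is not a torsion translate of a subtorus. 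So assume $n\ge 3$ and write $\Sigma$ for the union of all $C\cap H$ with $H$ of codimension $\ge 2$.

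The heart of the matter is the \textbf{Bounded Height Theorem}: there is a constant $B=B(C)$ with $h(P)\le B$ for every $P\in\Sigma$. The mechanism I would use is the following. A point $P=(x_1,\dots,x_n)\in\Sigma$ satisfies two $\Z$-independent relations $\prod x_i^{a_i}=1$ and $\prod x_i^{b_i}=1$, so for \emph{every} place $v$ of $k(P)$ the logarithmic valuation vector $\lambda_v:=(\log|x_1|_v,\dots,\log|x_n|_v)$ lies in the rational subspace $W_P=\{u\in\R^n:\langle a,u\rangle=\langle b,u\rangle=0\}$ of codimension $2$. On the other hand the valuation vectors of points of $C$ are constrained, at the non-archimedean places, by the tropicalisation of $C$ (a one-dimensional balanced fan in $\R^n$) and, at the archimedean places, by the behaviour of $C$ near the coordinate hyperplanes; the hypothesis that $C$ lies in no coset of a proper subtorus makes these local data span $\R^n$ and behave ``non-degenerately''. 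Confronting this non-degeneracy with the requirement $\lambda_v\in W_P$ at all $v$, and using the product formula $\sum_v\lambda_v=0$, one bounds $h(P)=\sum_v\max_i\max(0,\log|x_i|_v)$ purely in terms of $C$. Making this uniform in the unknown relations $a,b$ is precisely the quantitative work behind \cite{B-G}, and this is the step I expect to be the main obstacle: everything rests on controlling how much the valuation vectors of a point of $C$ can grow once they are forced into a codimension-$2$ rational subspace at every place.

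Granting bounded height, finiteness follows by descent to $\G_{\mathrm m}^2$. For $P\in\Sigma$ let $\Lambda_P\subset\Z^n$ be the saturated lattice of those $a$ with $\prod x_i(P)^{a_i}\in\mu_\infty$; then $\mathrm{rank}\,\Lambda_P\ge 2$, and since each coordinate $x_i(P)$ has height $\le B'=B'(C)$ and lies on the fixed curve $C$, a geometry-of-numbers argument (exploiting that two independent relations are available) shows that $\Lambda_P$ contains two independent primitive vectors $a,b$ with $\|a\|,\|b\|\le M(C)$; hence only finitely many sublattices $\langle a,b\rangle$ occur. For each such pair the monomial map $m_{a,b}:\G_{\mathrm m}^n\to\G_{\mathrm m}^2$, $x\mapsto(x^a,x^b)$, sends $C$ onto an irreducible curve $C'=m_{a,b}(C)$ which is \emph{not} a torsion coset in $\G_{\mathrm m}^2$ (otherwise $C$ would lie in the coset $m_{a,b}^{-1}(C')$ of a proper subtorus, against the hypothesis), while it sends $P$ to a torsion point of $\G_{\mathrm m}^2$; by Theorem~\ref{T.torsion} (or, if one prefers, Theorem~\ref{T.tori}) there are only finitely many such images, and each has finite fibre in $C$ since $m_{a,b}|_C:C\to C'$ is a finite morphism of curves. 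Running over the finitely many admissible $\langle a,b\rangle$ shows that $\Sigma$ is finite, as desired.
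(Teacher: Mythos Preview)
The paper does not prove this theorem at all: it is quoted as an auxiliary result from \cite{BMZ}, with only the one-line description ``proved by combining a height estimate for points on a curve satisfying multiplicative dependence relations with lower bounds for the Mahler measure of algebraic points in tori''. Your sketch correctly identifies the two-step architecture (Bounded Height Theorem, then finiteness from bounded height), and you are honest that the first step is only gestured at.

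There is, however, a genuine gap in your second step. You claim that, once $h(P)\le B$, a geometry-of-numbers argument produces two independent primitive vectors $a,b\in\Lambda_P$ with $\|a\|,\|b\|\le M(C)$ depending only on $C$. This does not follow. The successive minima of the lattice of multiplicative relations among $x_1(P),\dots,x_n(P)$ are bounded in terms of the heights \emph{and the degree} $[k(P):\Q]$; bounded height alone says nothing about the degree, so the bound $M(C)$ you assert need not exist. (Think of coordinates that are roots of unity of very large order: height zero, yet the only relations can have arbitrarily large exponents unless further structure is present.) This is exactly the place where the ingredient the paper names --- lower bounds for the Mahler measure, i.e.\ Dobrowolski/Amoroso--David type estimates --- enters in the original BMZ argument: those estimates couple height and degree and allow one to play the short-relation bound against the degree to force both to be bounded. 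Your projection-to-$\G_{\mathrm m}^2$ plus Theorem~\ref{T.torsion} endgame is fine \emph{once} the relation vectors are uniformly bounded, but getting that uniform bound is the real content of the second step and is missing from your outline.
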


 This result was improved by Maurin \cite{Maurin}, who replaced the hypothesis on $C$ by the weaker (and optimal) one that $C$ is not contained in any {\it torsion} subgroup. Se also \cite{BHMZ} by  the authors of \cite{BMZ} with Ph. Habegger for a different proof. 
 
 The crucial case of the above theorem, and the one which is needed in the present work, is the case of a curve in $\G_{\mathrm{m}}^3$: the theorem can be rephrased by saying that given three rational functions on any curve, multiplicatively independent modulo constants, the set of points on the curve where the values of the three functions satisfy two independent multiplicative dependence relations is finite.

 \medskip

We spend just a few words on the proofs of these results. The first two depend on an analytic description of the abelian varieties which appear, namely as complex tori (which vary). Correspondingly, expressing  the values of the sections as linear combinations  in a basis of periods for the tori, gives rise to coefficients which are real-valued functions; these are called ``Betti coordinates". 
Now, the relevant relations correspond to relations with integer coefficients among these coordinates; in turn,  one uses counting theorems for rational points in transcendental varieties  to prove that  if these relations hold for values at points of large degree, then they must come from a geometric relation. (See \cite{Z3} for much more on this.) 

The proof of Theorem \ref{T.torsion} is more elementary and still uses, albeit in a simpler way, the Galois action on torsion points. This theorem will be needed to treat circular billiards. 

The last theorem in the above list, Theorem \ref{T.BMZ}, is proved by combining a height estimate for points on a curve satisfying  multiplicative dependence relations with lower bounds for the Mahler measure of algebraic points in tori.
\smallskip

On the other hand, as we already mentioned, the proof of Theorem \ref{T.tori}  needs the Schmidt Subspace Theorem in Diophantine Approximation; this is a deep result, but of rather different nature compared to the former theorems. See  the book \cite{B-G} by E. Bombieri and W. Gubler for a proof, and see \cite{Z3}  for a description of related results and evolutions. 

So, the context shows the peculiarity that completely analogous statements admit completely different treatments.

\medskip

One can ask if it is possible to compute the relevant solutions, when they are finite in number. Now, the Subspace Theorem is presently ineffective and there is to date little hope to obtain an effective proof.  The results alluded to for the former theorems were ineffective as well, but conceptually the obstacles to effectivity were considered of more moderate nature compared to the Subspace Theorem. Indeed, Binyamini \cite{Bin} recently found many effective proofs in this realm.  It appears that these should allow to make effective some of the present results. Concerninig Theorem \ref{T.BMZ}, to be used only for circular billiard, at present it is still ineffective.


\section{Proofs}\label{S.proofs}

 For the proof of  Theorem \ref{T.esiste} we refer to the Appendix, where we shall add several other discussions and conclusions.

\subsection{Proof of Theorem \ref{T.angolo}} 
 This time we have two billiard shots $(p_0,v)$, $(p_0,v')$, such that $v,v'$ (in some order)  form an angle $\alpha\in (0,\pi)$ given in advance. Also, we denote as above $p_0=(a,b)$ where we suppose to be in a {\it real} billiard, so that $a,b$ are  real numbers  (a restriction which can be eliminated).  If  $\xi,\xi'$ are the respective slopes of $v,v'$, then, setting $t_0:=\tan\alpha\in (-\infty, +\infty)$, we have
\begin{equation}\label{E.tan}
\xi'={\xi+t_0\over 1-t_0\xi}=:g(\xi),
\end{equation}
and $\xi'=\frac{-1}{\xi}$ if $t_0=\infty$,
where for this proof we denote by $g\in \mathrm{PGL}_2$ the homography defining $\xi'$. Note that it has the fixed points $\pm i$.

As before, each billiard shot corresponds to a section of $\L$ over a base whose function field is a quadratic extension of $\C(\xi)$ resp. $\C(\xi')$ (we do not mind extending to $\C$ the constants in this case).  We may view the pair of shots as giving a section of the fiber product of $\L$ with  another copy of $\L$ over the $\xi$-line, with respect to the map defined by \eqref{E.tan}.  That is, the second copy equals the first, however with $\xi'$ in place of $\xi$.  Hence we obtain an abelian  scheme $\Aa$ over a finite cover $B$ of the $\xi$-line, where the fibers are products of two elliptic curves, in short $\Aa=\Aa_1\times_B\Aa_2$, where $\Aa_i$ are elliptic schemes. 

Recall now that each scheme is of Legendre type, with parameter $\lambda=s/c^2$ (where $C_s$ is the relevant caustic), and that $s$ is given in terms of the slope $\xi$ by equation \eqref{E.ssigma}, i.e. 
\begin{equation*}
s={c^2+(\xi a-b)^2\over \xi^2+1}=: R(\xi),
\end{equation*}
 where, again for this proof, we denote by $R$ the present  rational function of degree $2$ expressing $s$ in terms of $\xi$.

We start by proving that the two schemes obtained as above are not (generically and geometrically)  isogenous, that is, the corresponding elliptic curves do not become isogenous over any extension of $\C(s)$. In principle there are several methods for checking this, and we choose the following one. If the curves were isogenous, their $j$-invariants $J,J'$ would satisfy some modular equation $\Phi_n(J,J')=0$. Recall that such equations are over $\Z$, irreducible over $\C$, symmetric  and monic in both variables. Therefore $J'$ is integral over $\Z[J]$ and conversely. In particular, if $J,J'$ are rational functions on a certain complete smooth curve, they have exactly  the same poles. 

Now, the $j$-invariant of the Legendre  elliptic curve with parameter $\lambda$ is given by 
\begin{equation*}
j(\lambda)=1728{(\lambda^2-\lambda+1)^3\over \lambda^2(1-\lambda)^2}.
\end{equation*}
So this has poles of order $2$ at $\lambda=0,1,\infty$. If we take $\lambda=s/c^2$, this corresponds to 
$s=0,c^2,\infty$. In turn, if we use the above formula relating $s$ and $\xi$ this corresponds respectively  to 
\begin{equation*}
\xi= {b\pm ic\over a}, \qquad {\pm b\over c\pm a},\qquad \pm i,
\end{equation*}
at any rate for $a\neq 0, \pm c$, which we suppose for the moment. 
Consider now the image $\xi'$ of these points under the map $g$ appearing in   \eqref{E.tan}. The last two points are fixed by $g$, while under our present assumption the whole set of these points has to be stabilized by $g$. So the set of the first  four points has to be stabilized by $g$.  Since however $g$ is defined over $\R$ in fact it has to send the set of the first (resp. second)  two points into itself. Since $g$ has only $\pm  i$ as fixed points, $g^2$ must then fix all   points, and then it has to be the identity,  forcing $t_0=\infty$ (since $t_0=0$, corresponding to  $\alpha=0$, is excluded.  But then $g(\xi)=-\xi^{-1}$ and from $g((b+ic)/a)=g((b-ic)/a)$ we get $ b^2+c^2=-a^2$ which is impossible.

It remains to consider the cases $a=0$ and $a=c$.  

If $a=0$ the first two points are replaced by $\infty$ and  we obtain that the set $\{\infty,b/c,-b/c\}$ is acted on by the automorphism $g$, which, we recall, fixes $\pm i$. This is possible if only if $t_0=\pm \sqrt{3}$, i.e. $\alpha=\pm \pi/3$,  $p=(0,c/\sqrt{3})$ and the three point set is $\{\infty,1/\sqrt{3},-1/\sqrt{3}\}$.   
In this case, we have
\begin{equation*}
\lambda(\xi)=\frac{4 }{3(\xi^2+1)},\qquad \lambda(\xi')=\frac{3\xi'^2+2\sqrt{3}\xi'+1}{3(\xi'^2+1)}.
\end{equation*}
From these relations it follows that, although the two functions $j(\lambda(\xi))$ and $j(\lambda(\xi'))$ have the same pole set, the corresponding multiplicities do not coincide: for instance $\xi=\infty$ is a pole of order $4$ for $j(\lambda(\xi))$ while $\xi'=\infty$ is a double pole for $j(\lambda(\xi'))$. Since the modular polynomials are symmetric, the two functions $j(\lambda(\xi))$ and $j(\lambda(\xi'))$, for $\xi'=g(\xi)$, cannot be related by such an equation.

In the last case to consider, namely $a=c$, an argument of the same type as above shows that $g$ is an involution, so $\alpha$ is a right angle and $p$ lies in a focus. Then the assertion of the theorem is then trivial, since, as we previously noticed, there is only one periodic trajectory passing through the foci.

We note that the whole argument we have used can be rephrased in terms of bad reduction: unless $p$ lies in a focus, there always exist an angle $\gamma$ giving rise to bad reduction while $\gamma+\alpha$ corresponds to a non-degenerate caustic.

\medskip

This proves that the two elliptic schemes are not isogenous (up to one possible exception, when the theorem is already proved). But then the present scheme $\Aa$ has no proper group subschemes other than product or torsion subschemes, and any such subscheme projects to a torsion subscheme on at least one component.   But then,  if our product section has image contained into one of these, then one of the sections would be torsion. But the billiard section is not torsion, as we have shown  in Proposition \ref{P.nontors}.  Then we may apply Theorem \ref{CMZ}, which concludes the argument. 

\bigskip

It remains to treat the case of circular billiards, where both the geometric and the diophantine tools are very different. 

We can suppose the circle is defined by the equation
\begin{equation*}
x^2+y^2=1
\end{equation*}
and the point $p_0$ from which the ball is shot has coordinates $(u,0)$, with $-1<u<1$, $u\neq 0$. 
The line containing the first segment of the trajectory has an equation of the form
\begin{equation*}
\frac{x}{u}+by=1
\end{equation*}
for some $b\in \R$. Let $C_b$ the only circle centered at the origin which is tangent to that line. The billiard trajectory associated to $b$ will be periodic if and only if the angle formed by the tangent to $C_b$ drawn from a point on the border of the billiard with the diameter is commensurable with $\pi$. We can consider the point $(-1,0)$, so that this tangent line will have an equation of the form
\begin{equation*}
 -x+b' y=1.
\end{equation*}
 The fact that this two lines are tangent to a same circle centred in the origin amounts to the quadratic relation
 \begin{equation}\label{E.rel-b,b'}
 u^2+b^2=1+b'^2.
 \end{equation}
Letting $\gamma$ be the angle formed by the first line with the horizontal diameter, and $\beta$ the angle formed by the same diameter with the second line,
we have
\begin{equation*}
\tan(\gamma)=(bu)^{-1},\qquad \tan(\beta)=b^{-1}.
\end{equation*}
 The relation \eqref{E.rel-b,b'} is easily seen to be equivalent to the relation
 \begin{equation*}
 u^2\sin^2\gamma=\sin^2 \beta.
 \end{equation*}
By obvious symmetries, we can suppose that $u\sin\gamma=\sin \beta$. Now, let us suppose that the shots of angles $\gamma$ and $\gamma+\alpha$, where $\alpha$ is fixed, both give rise to periodic orbits. We then obtain that $u\sin\gamma=\sin\beta$ and $u\sin(\gamma+\alpha)=\sin\beta'$   for two angles  $\beta,\beta'$ which are commensurable with $\pi$. Writing 
\begin{equation*}
\sin\gamma=\frac{t-t^{-1}}{2i},\qquad \sin(\gamma+\alpha)=\frac{te^{i\alpha}-t^{-1}e^{-i\alpha}}{2i}
\end{equation*}
\bigskip
for a suitable complex number $t$ in the unit circle, and correspondingly for the  sines of $\beta,\beta'$, we arrive at the system of algebraic equations
\begin{equation}\label{E.system-G_m^3}
\left\{\begin{matrix}
u\cdot (t-t^{-1})&=&\xi-\xi^{-1}\\
u\cdot (te^{i\alpha}-t^{-1}e^{-i\alpha})&=&\eta-\eta^{-1}\end{matrix}\right.
\end{equation}
to be solved in $(t,\xi,\eta)\in \G_{\mathrm m}^3$ where $\xi,\eta$ are roots of unity.
We let $\mathcal{X}\subset \G_{\mathrm m}^3$ be the algebraic curve defined by the above system, $\pi:\, \G_{\mathrm m}^3\to \G_{\mathrm m}^2$ be the projection to the $(\xi,\eta)$ coordinates and $\mathcal{Y}=\pi(\mathcal{X})$ the projected algebraic curve. We must prove that $\mathcal{Y}$ cannot contain infinitely many torsion points.

We shall first verify  that if $e^{i\alpha}\neq \pm 1$, which we have supposed,  $\mathcal{X}$ and $\mathcal{Y}$ are isomorphic under $\pi$. We will then check that if $u\neq 0,\pm 1$, which we are supposing, $\mathcal{X}$ (and so $\mathcal{Y}$) is not a rational curve. This will allow the application of Theorem \ref{T.torsion} completing the proof.\smallskip

For the first assertion, we must check that for every $(\xi,\eta)\in\mathcal{Y}=\pi(\mathcal{X})$ there is just a point $t$ satisfying both equations in the system \eqref{E.system-G_m^3}. Indeed, these equations are quadratic in $t$ and the product of the two solutions to the first equation is $-1$, while the product of the solutions to the second equation is $e^{-2i\alpha}$, excluding that the two sets of solutions coincide.
\smallskip

As to the second assertion, we can view $\mathcal{X}$ as a fiber product of the two quadratic coverings of the $t$-line given by the individual equations of the system \eqref{E.system-G_m^3}. These equations define (isomorphic) elliptic curves, since the discriminant of the quadratic equation satisfied by $\xi$ (resp. by $\eta$) is $u^2(t-t^{-1})^2+4$ (resp. $u^2(te^{i\alpha}-t^{-1}e^{-i\alpha})^2+4$) which are rational functions on the line with four simple zeroes and two double poles. Hence the corresponding curve is a quadratic cover of the line ramified over four points.
Now, since $\mathcal{X}$ dominates an elliptic curve it cannot be rational. 

\qed

\subsection{Discussion on parallelogram billiards}\label{SS.altri}  We start with proving Theorem \ref{T.angolo-parallelogramma}, i.e. the analogue of Theorem \ref{T.angolo} for parallelogram billiard (e.g. rectangular ones), as promised in Remark \ref{R.altri}.

\begin{proof}[Proof of Theorem \ref{T.angolo-parallelogramma}]  

Let us normalise the lattice by complex dilation (which does not affect the issue), so to  assume that  $L=\Z\tau+\Z$, $Im(\tau)>0$. We shall write elements $\lambda\in L$ as linear combinations $a\tau+b$ of $\tau, 1$ with integer coefficients,  indicating this  with $F_\lambda(\tau)$, and we denote by $F_\lambda(x)$ the  polynomial $ax+b$  in the indeterminate $x$ of degree $\le 1$ having those same coefficients (so $\lambda=F_\lambda(\tau)$ is consistent). 

 The condition of periodicity of a shot with direction $v$ amounts to $v\in \R \cdot L:=\{t\lambda: t\in\R, \lambda\in L\}$ for the direction. So we want that $v\in\R\cdot L$ and also $v'=e^{i\alpha}v\in \R\cdot L$, and here $\alpha\in (0,\pi)$ is given.
 A solution of this amounts to an equation   $e^{i\alpha}=t\lambda/\delta$, where $t\in\R^*$ and $\lambda,\delta\in L-\{0\}$, and where two solutions have to be considered equivalent if the respective $\lambda,\delta$ are  the same up to a factor in $\R$ (which in fact should then lie in $\Q$); this amounts to the directions being the same. 

If $\C/L$ has $CM$ then  the $\Q$-vector space generated by the lattice is a field and, for an infinity of $\alpha$,  we obtain an infinity of inequivalent solutions starting from any single solution: indeed, from a solution $e^{i\alpha}=t\lambda/\delta$ and any $\eta\in L, n\in\N$, we obtain another solution $e^{i\alpha}=t\lambda'/\delta'$, where $\lambda'=n\lambda \eta$, $\delta'=n\delta\eta$ both belong to $L$ for suitable $n>0$. By taking an infinity of  pairwise non-$\Q$-proportional elements $\eta\in L$, we obtain infinitely many pairs of periodic solutions satisying the conditions of Theorem \ref{T.angolo-parallelogramma}.

For the converse assertion, let us then suppose to have four  essentially distinct solutions (i.e. with  non proportional $v$), denoted as $t_j,\lambda_j,\delta_j$, $j=1,2,3,4$. Putting $g_j(x)=F_{\lambda_j}(x)$, $h_j(x)=F_{\delta_j}(x)$.  We have $e^{i\alpha}=t_j\lambda_j/\delta_j$, so
\begin{equation*}
\lambda_i\delta_j=(t_j/t_i)\lambda_j\delta_i=t_{ij}\lambda_j\delta_i,\qquad t_{ij}=t_j/t_i\in\R^*.
\end{equation*}
We may write uniquely $\lambda_i\delta_j=g_i(\tau)h_j(\tau)$ as a quadratic $a_{ij}\tau^2+b_{ij}\tau+c_{ij}$ with integer coefficients in such a way  that the same holds on replacing $\tau$ with $x$.   For $1\leq i<j\leq 4$, set  $\v_{ij}=(a_{ij},b_{ij},c_{ij})$; these are six vectors in $\Q^3$. 

Let $\tau^2+a\tau+b=0$ be the minimal equation of $\tau$ over $\R$; we have to prove that $a,b\in\Q$. 

In any case,  the vector $\u:=(1,a,b)$ is proportional to $\v_{ij}-t_{ij}\v_{ji}$, provided this last vector is nonzero. Now, if $\v_{ij}-t_{ij}\v_{ji}=0$ for some pair $(1\leq i<j\leq 3$ then we have identically $g_i(x)h_j(x)=t_{ij}g_j(x)h_i(x)$.  But $g_k(x),h_k(x)$ cannot be proportional, no matter $k$, since otherwise $e^{i\alpha}$ would be real. So, the equation implies that  $h_j(x)=ch_i(x)$, $g_j(x)=ct_{ij}^{-1}g_i(x)$, for a rational constant $c$, and $t_{ij}$ must also be rational.  But then the two equations for $e^{i\alpha}$ are essentially the same, i.e. obtained just by multiplying the lattice elements by rational constants. We may assume this is not the case, so $\v_{ij}-t_{ij}\v_{ji}\neq 0$ for all $1\leq i <j\leq 3$, and actually the same argument proves that $\v_{ij}$ and $\v_{ji}$ are linearly independent.  This already shows that $\u$ lies in the plane spanned by them, i.e. the rational plane orthogonal to $\v_{ij}\wedge\v_{ji}\neq 0$, so $1,a,b$ are linearly dependent over $\Q$. If there exist two of these planes which are distinct, then $\u$ would lie in their intersection, which is a rational line, so $\u$ must be rational as wanted. So suppose that all these planes are equal; then the polynomials $g_i(x)h_j(x)$, $i\neq j$  generate a vector space of dimension $2$ over $\Q$.  In particular, $g_1(x)h_2(x)$, $g_1(x)h_3(x)$  generate this space, or $h_2(x),h_3(x)$ are proportional. This last assumption is impossible, since otherwise the second and third solutions would be essentially equal. Hence any $g_s(x)h_k(x)$, $s\neq k$, is a linear combination of $g_1(x)h_2(x)$, $g_1(x)h_3(x)$, and hence is a multiple of $g_1(x)$. In particular, $g_1|g_2h_3$ and $g_1|g_2h_4$, so, since   $g_1$ cannot divide $g_2$, for the same reason why $h_s$ cannot be proportional to $h_k$ for $s\neq k$, we must have $g_1|h_3$ and $g_1|h_4$; but this implies that   $h_3$ and $h_4$ are proportional. This is  contradiction  concludes the argument.

\end{proof}

\begin{proof}[Discussion of the assertion in Remark \ref{R.altri} concerning Theorem \ref{T.buca}] For Theorem \ref{T.buca} things are again  elementary, and we only add a few words. It is easy to see that, for instance in a rectangular billiard, taking three points $p_i=(a_i,b_i)$, and setting $\alpha_{ij}=a_i-a_j$, $\beta_{ij}=b_i-b_j$, the trajectories from $p_1$ passing through $p_2$ and $p_3$ correspond to integer solutions $(x,y,z,w)$ of $(\alpha_{13}+x)(\beta_{12}+y)=(\alpha_{12}+z)(\beta_{13}+w)$.  For instance if $p_1,p_2,p_3$ are rational points with common denominator $N$ and if, putting $A_{ij}=N\alpha_{ij}$, $B_{ij}=N\beta_{ij}$, we have $A_{13}B_{12}\equiv A_{12}B_{13}\pmod N$, there are infinitely many integer solutions: it suffices to find `many'  integers $m>0$, $m\equiv A_{13}B_{12}\pmod N$, and  having two divisors congruent modulo $N$ resp. to  $A_{13}$ and $A_{12}$. 

We leave it to the interested readers to discover the exact assumptions that have to be imposed for obtaining an analogue of Theorem \ref{T.buca} (and possibly Theorem \ref{T.ritorno}) for parallelogram billiards. 
\end{proof}

\subsection{Proof of Theorem  \ref{T.buca} and the finiteness part of Theorem \ref{T.ritorno}}
We now prove Theorem \ref{T.buca} distinguishing the  elliptic and the circular cases; the proofs are somewhat different.  The technique used to treat the elliptic case turns out to be useful in the proof of the finiteness part in Theorem \ref{T.ritorno}.

\subsubsection{Proof of Theorem \ref{T.buca} in the elliptic case} Given the three points $p_1,p_2\in \T^o$ and $h\in C$, consider the sections $\sigma_{p_1}, \sigma_{p_2}, \sigma_{h}$ associated to these points, as explained at the end of sub-section \ref{SS.phase-space}.  


Before going on, we recall  that these sections are not well-defined over the base  (parametrising the caustics)  of our elliptic scheme, but $\sigma_{p_1}$ and $\sigma_{p_2}$ are defined over (possibly different)  quartic extensions of the base; these extensions ramify over the points of the base corresponding to the two caustics passing through the relevant point and to the caustic $C$.  On the contrary, the section $\sigma_h$ is defined over a quadratic extension: so, although    there are still four choices  for a shot from $h$ with given caustic,  one can canonically choose the direction of the shot for each tangent (and we shall be interested in the direction pointing to $h$). Hence $\sigma_h$ can be defined over a degree two  extension of the base, which ramifes over the (hyperbolic) caustic passing through $h$ and (again) the caustic $C$. 

We can then define three elliptic schemes $\mathcal{X}_{p_1}, \mathcal{X}_{p_2}, \mathcal{X}_h$, each of them derived from the billiard scheme $\E\to \P_1-\{4\, \mathrm{points}\}$ by base change as explained in the sub-section  \ref{SSS.ell-surface}, each endowed with a new section, namely  $\sigma_{p_1}, \sigma_{p_2},\sigma_h$. These sections associate to each point of the  base of $\mathcal{X}_{p_1}$ (resp. $\mathcal{X}_{p_2}, \mathcal{X}_{h}$)  a `shot' passing through $p_1$ (resp. $p_2$, $h$) 
 We can also let $B$ be the compositum of all the base changes, and define an abelian scheme $\mathcal{A}\to  B$ by taking the fibre products of the ellipic schemes $\mathcal{X}_{p_1}, \mathcal{X}_{p_2}, \mathcal{X}_h$. 

The billiard shots we are considering in Theorem \ref{T.buca}, i.e. those sending $p_1$ to $p_2$ after $m$ bounces and then $p_2$ to $h$ after $n$ further bouncings, correspond to the values of $s\in B$ such that
\begin{equation}\label{E.buca1}
\left\{\begin{matrix}
(\sigma_{p_1} +m\kappa)(s)&=& \sigma_{p_2}(s)\\
(\sigma_{p_2}+n\kappa)(s)&=& \sigma_h(s)
\end{matrix} 
\right.
\end{equation}
where $\kappa$ is the billiard section viewed on $B$. More precisely, if (contrary to what is claimed in Theorem \ref{T.buca}) there existed infinitely many shots from $p_1$ sending the ball to $p_2$ and eventually to the hole $h$, then for a suitable choice of the sections associated to $p_1,p_2,h$, the system of equations \eqref{E.buca1} would have admit infinitely many solutions $s\in B$, $m,n\in\Z$. 

Let us write this system in the form
\begin{equation}\label{E.buca2}
\left\{\begin{matrix}
(\sigma_{p_2}-\sigma_{p_1})(s) &=& m\kappa(s)\\
(\sigma_{h}-\sigma_{p_2})(s) &=& n\kappa(s) 
\end{matrix} 
\right.
\end{equation}
The three sections $\sigma_{p_2}-\sigma_{p_1},\sigma_{p_2}-\sigma_h,\kappa$ give rise to a section $\sigma:B\to \mathcal{A}$ to the three-dimensional abelian scheme just defined. The solutions $s\in B$ to the above system give  rise to points where $\sigma(s)$ is contained in a subgroup scheme of codimension $2$.

Before going on, we note that in the present situation, unlike that of the proof of Theorem \ref{T.angolo}, the four sections $\sigma_{p_1}, \sigma_{p_2}, \sigma_h,\kappa$ we are considering can be algebraically defined over the same scheme $\mathcal{L}\to\P_1$ (the Legendre scheme). Hence in principle there can be linear relations among them (and indeed there are some, in very special cases). So some extra work is needed to exclude linear relations of a certain type, which might prevent an application of Theorem \ref{B-C}. That is, the image of our section could be {\it identically} lie in a subgroup-scheme of codimension $2$, and then of course this would continue to hold for each point $s\in B$.

More precisely, if the three sections $\sigma_{p_2}-\sigma_{p_1},\sigma_{p_2}-\sigma_h,\beta$ are linearly independent,  the curve $\sigma(B)$ is not contained in any proper subgroup scheme of $\mathcal{A}\to B$, and Theorem \ref{B-C} applies, assuring the finiteness of the solutions to \eqref{E.buca2}.

Hence we have to investigate these possible dependencies.

\medskip

We first show that the sought independence  holds {\it generically} and then we shall treat the special cases. 

\medskip

{\tt Claim}. {\it Let $p_1,p_2$ be interior points outside the axes of the ellipse. Unless the two caustics containing $p_1$ coincide with the two caustics containing $p_2$, the four sections $\sigma_{p_1},\sigma_{p_2},\sigma_h,\kappa $ are linearly independent. In particular, the three sections $\sigma_{p_2}-\sigma_{p_1},\sigma_{p_2}-\sigma_h,\kappa$ are linearly independent.}
\medskip

The principle of the proof is the following: if some algebraic sections $\sigma_1,\ldots,\sigma_k$ of an elliptic scheme can be rationally defined on a base which is unramified over a certain place $s_0$ while another section $\sigma_{k+1}$ cannot, then no multiple of this last section can belong to the group generated  by the previous ones. In particular, if $\sigma_1,\ldots,\sigma_k$ are proved to be independent, then also $\sigma_1,\ldots,\sigma_{k+1}$ will be independent.

\smallskip

{\it Proof of the Claim}. 
 Suppose first that the elliptic caustics passing through $p_1$ and $p_2$ are different.
 Recall that the billiard section is defined over a quadratic extension of the base of the Legendre scheme which ramifies only over $s=1$ and $s=\infty$; since the minimal field of definition of $\sigma_h$ ramifies  over the hyperbolic caustic containing $h$, the section $\sigma_h$ cannot be dependent with $\kappa$. 
 Consider now $\sigma_{p_1}$; observe that it is defined on a base which ramifies over the elliptic caustic passing through $p_1$, while  $\kappa,\sigma_h$ can be defined on a base which is unramified over such a caustic. Hence no multiple of $\sigma_{p_1}$ belongs to the subgroup generated by $\sigma_h,\kappa$. For the same reason, looking at the elliptic caustic passing through $p_2$, we deduce that no  multiple of $\sigma_{p_2}$ can be generated by $\kappa,\sigma_h,\sigma_{p_1}$.
 
 If, on the contrary, the two points $p_1,p_2$ are contained in a same elliptic caustic, but not in a same hyperbolic caustic, we argue as follows: certainly at most one of the two hyperbolic caustics containing $p_1,p_2$ can contain $h$; suppose for instance that the hyperbolic caustic containing $p_2$ does not contain $h$ (nor $p_1$). Then we proceed as before to prove that $\kappa,\sigma_h,\sigma_{p_1}$ are linearly independent, and to conclude the argument we use the ramification over this hyperbolic caustic (containing $p_2$ but not $p_1$).

\smallskip 

It remains to treat the special case when $p_1,p_2$  lie at the intersection of the same caustics or one of them lies on the axis; we shall see that in this case $\sigma_{p_1},\sigma_{p_2}$ can indeed be linearly dependent (and they always  are so for a suitable choice of the sections). For simplicity, we shall still suppose that $p_1,p_2$ do not lie on the axes,  leaving  to the reader that special (and easier) case.  

In the sequel we shall treat this special case, but we stress that the theorem is already proved in the `generic' case when $p_1,p_2$ are not on the same caustics, so that the  arguments we are using below are  needed only in that special case.

We shall use a modification of Theorem \ref{B-C}, which can be formally deduced from the general results of  \cite{B-C} and was formulated earlier by D. Ghioca, L.C. Hsia and T. Tucker. Here is the statement:

\begin{thm}\label{T.Ghioca-Hsia-Tucker}
Let $\rho_1,\rho_2,\kappa$ be sections of a complex elliptic scheme $\E\to B$. If there are infinitely many points $s\in B$ such that the system
\begin{equation*}
\left\{\begin{matrix}
m\kappa(s)&=&\rho_1(s)\\
n\kappa(s)&=&\rho_2(s)
\end{matrix}\right.
\end{equation*}
admits a solution $(m,n)\in\Z^2$ then either there exists an index $i\in\{1,2\}$ and an integer $l\in\Z$ such that $l\kappa=\rho_i$,  or $\rho_1,\rho_2$ are linearly dependent.
\end{thm}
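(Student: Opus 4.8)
The plan is to reduce the statement to Theorem \ref{B-C} by building an appropriate abelian scheme and section, and then to analyze what the subgroup-scheme alternative of Theorem \ref{B-C} means for $\rho_1,\rho_2,\kappa$. First I would form the fibre product $\mathcal{A}=\E\times_B\E\times_B\E\to B$, a three-dimensional abelian scheme, and consider the section $\sigma=(\rho_1,\rho_2,\kappa):B\to\mathcal{A}$. A point $s\in B$ for which the displayed system has a solution $(m,n)\in\Z^2$ is precisely a point at which $\sigma(s)$ lies on the subgroup scheme cut out (fibrewise) by the two relations $X_1-mX_3=0$ and $X_2-nX_3=0$; this is a subgroup scheme of $\mathcal{A}$ of codimension $\ge 2$. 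Hence if there are infinitely many such $s$, then $\sigma(B)$ meets the union of all codimension-$\ge 2$ subgroup schemes of $\mathcal{A}$ in an infinite set. By Theorem \ref{B-C} this forces $\sigma(B)$ to be contained in a proper group subscheme $G\subsetneq\mathcal{A}$, i.e. there is a nonzero integer vector $(a,b,c)\in\Z^3$ with $a\rho_1+b\rho_2+c\kappa=0$ identically (after passing to the connected component of $G$ through $\sigma$, one obtains such a relation, possibly with a torsion ambiguity that disappears after multiplying by the order of the torsion; I would absorb this into the coefficients).

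Next I would extract the conclusion from the relation $a\rho_1+b\rho_2+c\kappa=0$. If $b=0$ then $a\rho_1=-c\kappa$ with $a\neq 0$ (if $a=0$ too then $c\kappa=0$, making $\kappa$ torsion, which is excluded in our billiard application; in the abstract statement one may note that then the hypothesis degenerates, or simply observe $\kappa$ torsion trivially yields one of the alternatives after clearing denominators). Dividing by $a$ in the Mordell-Weil group tensored with $\Q$ and then clearing to get an honest integer relation — using that $\E(\overline{K(B)})$ is finitely generated, or working directly with the relation $a\rho_1=-c\kappa$ and the fact that we may replace $\rho_1$ by $\rho_1$ and allow $l$ rational after scaling — one reaches $l\kappa=\rho_1$ for some $l\in\Z$, up to the subtlety of divisibility which I address in the next paragraph. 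Symmetrically, $a=0$ gives $l\kappa=\rho_2$. Finally, if both $a\neq 0$ and $b\neq 0$, the relation $a\rho_1+b\rho_2+c\kappa=0$ exhibits a nontrivial $\Z$-linear dependence between $\rho_1$ and $\rho_2$ modulo the subgroup generated by $\kappa$; I would argue that this already yields that $\rho_1,\rho_2$ are linearly dependent in the required sense, because the system then forces $\kappa(s)$, hence $\rho_1(s),\rho_2(s)$, into a fixed proper subgroup for the infinitely many relevant $s$, and one can rerun the two-section version of Theorem \ref{B-C} (or Theorem \ref{CMZ}) on the sub-scheme to conclude that $\rho_1,\rho_2$ satisfy a genuine linear relation over $\Z$.

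The main obstacle I anticipate is bookkeeping the distinction between "contained in a translate of a proper subgroup scheme" and "contained in a proper subgroup scheme through the origin": Theorem \ref{B-C} as quoted gives a relation only after dealing with a possible torsion translate, and the divisibility issue (getting $l\in\Z$ rather than merely $l\in\Q$, i.e. $l\kappa=\rho_i$ on the nose rather than $l\kappa = \rho_i$ up to torsion) requires a little care. I would handle this by noting that any torsion ambiguity can be cleared by multiplying the relation by the (uniformly bounded) order of the relevant torsion and by observing that the statement of the theorem already allows us to absorb constants into $m,n$; alternatively one invokes that over $\C(B)$ the relevant Mordell-Weil group modulo torsion is torsion-free, so a relation $l\rho_1 \in \Z\kappa + (\text{torsion})$ with $\gcd$ conditions can be massaged into the asserted form. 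The geometric input (non-isotriviality, the precise shape of the Legendre scheme) is not needed here since the theorem is stated for an arbitrary complex elliptic scheme; everything rests on Theorem \ref{B-C} and elementary group theory in $\E(\overline{\C(B)})$.
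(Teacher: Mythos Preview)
The paper does not give a detailed proof of this statement; it only remarks that it ``can be formally deduced from the general results of~[B-C]'' (i.e.\ the full Barroero--Capuano theorem, which is strictly stronger than Theorem~\ref{B-C} as quoted here) and credits the formulation to Ghioca--Hsia--Tucker. So there is no argument in the paper to compare against beyond that pointer.

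Your reduction via $\sigma=(\rho_1,\rho_2,\kappa):B\to\E\times_B\E\times_B\E$ is the natural first move and correctly produces a nontrivial identity $a\rho_1+b\rho_2+c\kappa=0$. The gap is the final case $a\neq 0,\ b\neq 0$: a single such relation does \emph{not} force $\rho_1,\rho_2$ to be linearly dependent, and your proposal to ``rerun'' Theorem~\ref{B-C} or Theorem~\ref{CMZ} inside the codimension-one subgroup $G=\{aX_1+bX_2+cX_3=0\}$ fails because the relevant subgroups $H_{m,n}=\{X_1=mX_3,\ X_2=nX_3\}$ with $am+bn+c=0$ meet $G$ in codimension~$1$, not~$2$, so those theorems impose no further constraint. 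Concretely, take independent non-torsion sections $\tau,\kappa$ and set $\rho_1=2\tau$, $\rho_2=\tau+\kappa$: then $\rho_1-2\rho_2+2\kappa=0$, yet $\rho_1,\rho_2$ are independent and neither equals $l\kappa$; nevertheless for every $s$ with $\tau(s)\in\Z\kappa(s)$ (and there are infinitely many such $s$, e.g.\ by Proposition~\ref{P.reali1}) the system is solvable with $(m,n)=(2k,k+1)$. So the bare Theorem~\ref{B-C} is not enough: one really needs the stronger ``relative'' form of Barroero--Capuano, which controls intersections with subgroups whose trace on the minimal subgroup containing $\sigma(B)$ still has codimension $\ge 2$ --- and this example also indicates that the second alternative in the dichotomy should be read as linear dependence of $\rho_1,\rho_2,\kappa$ rather than of $\rho_1,\rho_2$ alone.
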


After re-writing \eqref{E.buca2} in the more symmetric  way
\begin{equation}\label{E.buca3}
\left\{\begin{matrix}
(\sigma_{p_1}-\sigma_{h})(s) )&=& (m-n)\kappa(s)\\
(\sigma_{p_2} -\sigma_h)(s) &=& n\kappa(s) 
\end{matrix} 
\right.
\end{equation}
we set $\rho_1=\sigma_{p_1}-\sigma_{h}, \rho_2=\sigma_{p_2}-\sigma_{h}$ and shall apply the above Theorem. 

To proceed in the proof of Theorem \ref{T.buca} we  need to prove the following two lemmas:

\begin{lem}\label{lem1}
Let $p,h\in \T$  be distinct real points, not foci. For no choice of sections $\sigma_p,\sigma_h$ associated to them a relation of the form $\sigma_p-\sigma_h=l\kappa $ can hold.
\end{lem}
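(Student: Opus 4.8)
The proposal is to argue by a ramification count over the $s$--line $\P_1$ that parametrises the caustics. Suppose, for contradiction, that for some choice of the sections associated to $p$ and $h$ one had $\sigma_p-\sigma_h=l\kappa$ for some $l\in\Z$. Pulling everything back to a common base dominating $Y_p$, $Y_h$ and $\P_1$, this becomes the identity $\sigma_p=\sigma_h+l\kappa$ between sections of (the pullback of) the Legendre scheme. The first step is to record the loci over which the \emph{minimal} fields of definition of the three sections are ramified: by Remark \ref{R.bmap} the billiard section $\kappa$ is defined over $\C(s,\sqrt{1-s})$, ramified only over $s=1$ and $s=\infty$; by the discussion in \S\ref{SS.phase-space}, for $h\in C$ the section $\sigma_h$ is defined over a quadratic extension of $\C(s)$ ramified only over $s=1$ and over the parameter $s_h$ of the confocal conic $\neq C$ through $h$, which is a hyperbola, so $0<s_h<c^2$ (and $s_h=1$ in the degenerate case where $h$ is a vertex of $C$). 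Hence $\sigma_h+l\kappa$ would be defined over a field $F/\C(s)$ that is unramified away from $\{s_h,1,\infty\}$.

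The second step is to see that $\sigma_p$ cannot be defined over such a field $F$. Let $s^{*}$ denote the parameter of the confocal \emph{ellipse} through the interior point $p$ (when $p$ lies on the open segment between the foci this ellipse degenerates, and one uses instead $s^{*}=c^{2}$, the focal segment). As $s\to s^{*}$ the two tangent lines drawn from $p$ to $C_s$ coalesce into the tangent to $C_{s^{*}}$ at $p$, so the degree-four cover $Y_p\to\P_1$ of \S\ref{SSS.ell-surface} is genuinely ramified over $s^{*}$; moreover, by the explicit model of $\mathcal{X}_p$ and of $\sigma_p$ recalled there and in \cite{CZPonc}, the inertia at a point of $Y_p$ above $s^{*}$ swaps the two tangents and therefore carries $\sigma_p$ to a different section. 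Consequently no field of definition of $\sigma_p$ is unramified over $s^{*}$. Since $p$ is interior to $C$ and not a focus one has $c^{2}\le s^{*}<1$, whence $s^{*}\notin\{s_h,1,\infty\}$ because $s_h<c^{2}$; this contradicts $\sigma_p=\sigma_h+l\kappa$ and yields the lemma. (The value $l=0$ is covered by the same argument, and the cases where $p$ lies on a coordinate axis would be handled by the analogous, and easier, variant of the construction of $\mathcal{X}_p$; similarly when $h$ is a vertex of $C$ one only needs $s^{*}\notin\{1,\infty\}$, which again holds.)

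The step I expect to require genuine care — everything else being bookkeeping with confocal conics — is the assertion that the \emph{section} $\sigma_p$, and not merely its base curve $Y_p$, is ramified over $s^{*}$; equivalently, that the monodromy interchanging the two tangents from $p$ does not fix $\sigma_p$. This is precisely the point where the detailed geometry of the elliptic surface $\mathcal{X}_p$ (its model as quadruples of points of $C$, the Galois structure of $Y_p\to\P_1$, the description of the sections $\sigma_p,\tilde\sigma_p$) developed in \S\ref{SSS.ell-surface} and in \cite{CZPonc} is needed; once it is granted, the contradiction in the plan above is immediate. It is worth noting that the same mechanism (a section being forced to acquire ramification at a place where the other sections remain unramified) is exactly what is used just above to establish the Claim, so no essentially new ingredient beyond that bookkeeping is expected here.
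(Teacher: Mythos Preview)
Your ramification argument is valid (at least in the case $h\in C$ and $p$ interior, which is the one actually used, and in fact the only case the paper's own proof addresses despite the more general statement), and it has the merit of reusing verbatim the mechanism from the proof of the Claim just above. The paper, however, takes a different and more geometric route: working on the phase-space surface $\mathcal{X}$ birational to $C\times C$, it observes that $\overline{L}_h$ meets the diagonal at the \emph{real} point $(h,h)$ while $\overline{L}_p$ meets the diagonal only at the \emph{non-real} points $(q,q)$ for which the tangent to $C$ at $q$ passes through $p$; since the billiard automorphism $\beta$ fixes every diagonal point away from the four blown-up ones, and $(q,q)$ is such a fixed point not lying on $\overline{L}_h$, no power of $\beta$ can carry $L_p$ to $L_h$. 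The paper also records an entirely elementary argument for this same case: choose an elliptic caustic $C_s$ with $p$ strictly inside it; then no trajectory tangent to $C_s$ can reach $p$, so $\sigma_h(s)+l\kappa(s)\ne\sigma_p(s)$ for every $l>0$. Your approach costs you exactly the verification you flag---that the inertia at $s^{*}$ really moves the section $\sigma_p$ and not just the base $Y_p$, which ultimately reduces to $\sigma_p\ne\tilde\sigma_p$---whereas the paper's diagonal argument is self-contained and pinpoints the obstruction geometrically as a fixed-point mismatch between $L_p$ and $L_h$.
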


In this  lemma, to be proved below together with the next one,  $p$ and $h$ can be either interior points of the billiard table $\T$ or border points; in our application here $p$ is an interior one and $h$ lies on the border $C$.
Note that geometrically this means that, whatever the interior point $p$ and the hole $h$ be fixed, there exists no integer $l$ such that shooting the ball from $p$ in any direction, after $l$ bounces the ball ends in the hole. From this fact it follows that the set of directions from $p$ sending the ball to the hole is at most countable. (In Theorem \ref{T.esiste} we also show that it is indeed an infinite countable set and provide an estimate for the number of suitable directions in term of the number of bounces.)

\begin{lem}\label{lem2}
Suppose that for some choices of three distinct points $p_1,p_2, h$, of which $h$ is on the border, $p_1,p_2$ are interior and not foci, and some choice of sections $\sigma_{p_1},\sigma_{p_2}$ and $\sigma_h$ the two sections $\sigma_{p_1}-\sigma_{h}$, $\sigma_{p_2}-\sigma_{h}$ are linearly dependent. Then the two caustics passing through $p_1$ coincide with those passing through $p_2$   and a minimal  linear relation reads $2(\sigma_{p_2}-\sigma_h)=2(\sigma_{p_1}-\sigma_h)$ i.e 
\begin{equation}\label{E.only-relation}
2\sigma_{p_1}=2\sigma_{p_2}.
\end{equation}
\end{lem}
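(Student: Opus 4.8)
The plan is to derive both assertions from the ramification bookkeeping already used for the Claim, together with the fact that the symmetry of the confocal pencil carrying $p_1$ to $p_2$ acts on the elliptic fibres as translation by a non-zero point of order two.

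\emph{The caustics must coincide.} First I would note that $\rho_1:=\sigma_{p_1}-\sigma_h$ and $\rho_2:=\sigma_{p_2}-\sigma_h$ are non-torsion. Indeed $\sigma_{p_i}$ is defined only over a degree-four cover of the $s$-line ramified over the elliptic caustic $C_{e_i}$ through $p_i$ (with $c^2<e_i<1$), whereas $\sigma_h$ is defined over a cover ramified only over the hyperbolic caustic through $h$ (parameter $<c^2$) and over $s=1$; since $C_{e_i}$ is a fibre of good reduction, a torsion section would be unramified there, so $\sigma_{p_i}-\sigma_h$ is not torsion (the same remark also reproves Lemma~\ref{lem1}, since $\kappa$ is unramified over $e_i$ too). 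Hence a dependence $a\rho_1+b\rho_2=0$ with $(a,b)\ne 0$ is an honest $\Z$-relation $a\sigma_{p_1}+b\sigma_{p_2}=(a+b)\sigma_h$ among $\sigma_{p_1},\sigma_{p_2},\sigma_h$ -- non-trivial even if $a+b=0$, since then $\sigma_{p_1},\sigma_{p_2}$ would be dependent. By the Claim this cannot occur unless the two caustics through $p_1$ coincide with those through $p_2$; as $p_1,p_2$ avoid the axes they are then two of the four intersection points $(\pm a,\pm b)$ of those two caustics, so $p_2=\tau(p_1)$ for one of the three non-trivial sign-symmetries $\tau\colon(x,y)\mapsto(\pm x,\pm y)$ of $C$.

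\emph{Identifying $\sigma_{p_2}-\sigma_{p_1}$ and concluding.} The symmetry $\tau$ fixes every confocal caustic; in the birational model of \S\ref{SSS.ell-surface} it sends $(q_1,q_2,q_1,q_2)$ to $(\tau q_1,\tau q_2,\tau q_1,\tau q_2)$, so it identifies $\mathcal{X}_{p_1}\to Y_{p_1}$ with $\mathcal{X}_{p_2}\to Y_{p_2}$ over an isomorphism of bases and carries $\sigma_{p_1}$ to one of the four sections through $p_2$, while acting on the fibres as translation by a non-zero point of order two (see \S\ref{SS.phase-space} and \cite{CZPonc}). Combining this with the explicit description in \cite{CZPonc} of the four sections through a point, and passing to the compositum $B$ of $Y_{p_1},Y_{p_2},Y_h$, the choice of sections making $\rho_1,\rho_2$ dependent must satisfy $\sigma_{p_2}=\sigma_{p_1}+t$ with $t$ of order two: the competing possibilities (e.g. $\sigma_{p_2}=-\sigma_{p_1}+t'$ with $t'$ torsion, or the analogous ``change-tangent'' variants) are excluded by the distinct ramification and non-torsion nature of $\sigma_{p_1}$ and $\sigma_h$, which would otherwise force $\rho_1,\rho_2$ to be independent. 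Moreover $t\ne 0$, because the line $\overline{p_1p_2}$ is tangent to a single caustic, so $\sigma_{p_1}=\sigma_{p_2}$ cannot hold as sections. Then $2(\rho_2-\rho_1)=2(\sigma_{p_2}-\sigma_{p_1})=2t=0$, which is \eqref{E.only-relation}; and any $\Z$-relation $a\rho_1+b\rho_2=0$ rewrites as $(a+b)(\sigma_{p_1}-\sigma_h)=-bt$, which forces $a+b=0$ (a non-torsion element equal to a torsion one) and then $2\mid b$ (since $t$ has order two), so the relations form exactly $\Z\cdot(2,-2)$ and \eqref{E.only-relation} is minimal.

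\emph{Main obstacle.} The delicate point is the assertion that the sign-symmetry relating $p_1$ and $p_2$ lifts to a \emph{pure} $2$-torsion translation of the K3 surfaces $\mathcal{X}_p$ -- not an inversion, nor a translation composed with $[-1]$ -- and the correct matching of $\sigma_{p_1}$ with one of the four sections through $p_2$. This rests on the explicit structure of the Mordell-Weil groups of the $\mathcal{X}_p$ and of the action of the symmetry group of $C$ on their sections, established in \cite{CZPonc}; one must also verify that the ramification arguments of the first step remain valid after passing to the common base $B$.
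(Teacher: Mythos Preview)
Your approach is the same as the paper's: deduce the coincidence of caustics from the Claim (via ramification), then in the special case use that the symmetry $(x,y)\mapsto(\pm x,\pm y)$ carrying $p_1$ to $p_2$ acts fibrewise as translation by a point of order two, so that among the four choices $\pm\sigma_{p_1}+\rho,\ \pm\tilde\sigma_{p_1}+\rho$ for $\sigma_{p_2}$ only $\sigma_{p_1}+\rho$ is compatible with a dependence between $\rho_1,\rho_2$, yielding \eqref{E.only-relation}.

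The gap is in your exclusion of the ``change-tangent'' variants. You assert these are ruled out ``by the distinct ramification and non-torsion nature of $\sigma_{p_1}$ and $\sigma_h$'', but this is exactly where ramification no longer helps: in the only case not already covered by the Claim, namely when $h$ lies on the hyperbolic caustic through $p_1$, the sections $\sigma_{p_1},\tilde\sigma_{p_1},\sigma_h$ all have minimal fields of definition ramified only over the two caustics through $p_1$ and over $s=1$, so there is no fresh ramification to separate $\sigma_{p_1}-\sigma_h$ from $\tilde\sigma_{p_1}-\sigma_h$. What is actually needed is the linear independence (modulo torsion) of $\sigma_{p_1},\tilde\sigma_{p_1},\sigma_h$, and the paper obtains this not from ramification but from an explicit computation of the N\'eron--Tate height pairing on the lattice they generate (carried out in \cite{CZPonc}): the Gram matrix
\[
\begin{pmatrix} 1 & 0 & 1/2 \\ 0 & 1 & -1/2 \\ 1/2 & -1/2 & 1 \end{pmatrix}
\]
has non-zero determinant. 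Your ``Main obstacle'' paragraph does defer to \cite{CZPonc}, but misidentifies the needed input: it is this height computation, not the lifting property of the symmetry (which is already established in \S\ref{SS.phase-space}), that carries the weight at this step. The case $\sigma_{p_2}=-\sigma_{p_1}+\rho$ is indeed handled by independence of $\sigma_{p_1},\sigma_h$, as you and the paper both note.
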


Let us conclude the proof of Theorem \ref{T.buca} assuming these lemmas (to be proved in a moment). The application of Theorem \ref{T.Ghioca-Hsia-Tucker} provides the sought finiteness result unless $\sigma_{p_1},\sigma_{p_2}$ satisfy the above linearly dependence relation (since the first conclusion of Theorem \ref{T.Ghioca-Hsia-Tucker} cannot hold in view of Lemma \ref{lem1}). In that case, looking at  the system \eqref{E.buca2} we obtain that the solutions $s$ to the system are torsion points for both $\beta$ and $\sigma_{p_2}-\sigma_h$. But $\sigma_{p_2}-\sigma_h$ is non-torsion by the above lemma, and the billiard section $\beta$ is also non-torsion, as we have already remarked. Then by Theorem \ref{CMZ} the set of such points $s$ is finite.
\medskip

It remains now to prove the two lemmas.

\smallskip

{\it Proof of Lemma \ref{lem1}}. We prefer to use the phase space  model of the elliptic scheme described in paragraph \ref{SS.phase-space}.  Recall that the compactification of the total space of the basic billiard surface is the surface $\mathcal{X}$ obtained by suitably blowing up eight times over the surface $C\times C\simeq \P_1\times\P_1$. The billiard map can also be viewed as an automorphism $\beta:\mathcal{X}\to\mathcal{X}$ of that surface. The sections $\sigma_p,\sigma_h$ are indeed multi-sections in this model, and correspond to curves, say $L_p,L_h$,  on the surface $\mathcal{X}$. The lemma we are proving amounts to saying that no power of $\beta$ sends the curve $L_p$ to the curve $L_h$. 

Consider the images $\overline{L}_p$ (resp. $\overline{L}_h$)  of $L_p$ (resp. $L_h$) on $C\times C$ (see diagram \eqref{E.diagramma}).  
Observe that $\overline{L}_h$ intersects the diagonal of $C\times C$ at the point $(h,h)$, while  $\overline{L}_p$  intersects the diagonal at the complex points of the form $(q,q)$ where $q\in C$ is such that the tangent at $q$ passes through $p$. Now, since $p$ is internal, $q$ cannot be real, so in particular $q\neq h$; this implies that $(q,q)$ does not belong to $\overline{L}_h$. Also, since $p$ is not a focus, $q$ is not one of the four points to blow up in the construction of $\mathcal{X}$. It follows that the automorphism $\beta$, viewed as a rational automorphism of $C\times C$, is well defined at these points $(q,q)$ (there are two of them) and fixes them. Since $(q,q)$, which is fixed by $\beta$,  does not belong to $\overline{L}_h$, no power of $\beta$ can send $\overline{L}_p$ to $\overline{L}_h$.
\qed

\smallskip

Note that in the case of our concern, i.e. when $h$ lies on the boundary and $p$ is an interior point, a simpler proof is available: take an elliptic caustic $C_s$ so that $p$ lies inside it; then every trajectory starting from $h$, which is made of segments of lines tangents to $C_s$, cannot pass through the point $p$. This proves that for such an $s$ and for every positive integer $l$, $\sigma_h(s)+l\kappa(s)\neq \sigma_p(s)$.

\smallskip

Concerning Lemma \ref{lem2}, we first note that it can happen that $\sigma_{p_1},\sigma_{p_2}$   satisfy relation \eqref{E.only-relation}. Indeed, suppose that $p_2$ is the image of $p_1$ under one of the three non-identical automorphisms $(x,y)\mapsto(\pm x,\pm y)$ of $C\times C$; this is precisely the case when the caustics passing through $p_1$ are the same as those passing through $p_2$. We already remarked in paragraph \ref{SS.phase-space} that these symmetries  lift on the surface $\mathcal{X}$ to the automorphisms of translations by points of order $2$. The corresponding multi-sections $L_{p_1},L_{p_2}$ are interchanged by one of these order two automorphisms. This implies that after performing a base change so that $L_{p_1}\subset\mathcal{X}$  gives rise to a rational section  $\sigma_{p_1}$,  there is a suitable choice  for the rational section  $\sigma_{p_2}$, depending on the choice of $\sigma_{p_1}$ (recall that there are four choices for each section above $L_{p_2}$), such that the difference $\sigma_{p_2}-\sigma_{p_1}$ is of order two, and so relation \eqref{E.only-relation} holds.

\smallskip

{\it Proof  of Lemma \ref{lem2}}. The lemma has already been proved in the generic case, i.e. with the possible exceptions when $p_1,p_2$ lie on the same caustics and $h$ lies in the hyperbolic caustic containing $p_1,p_2$. Let us then consider only this case. Denote by $\pm \sigma_{p_1}, \pm\tilde{\sigma}_{p_2}$ the four sections associated to the interior point $p_1$ as in \ref{SSS.ell-surface} and by $\sigma_h$ one of the two sections associated to $h$, ``pointing towards $h$''; the other such section is $-\sigma_h-\kappa$. Then the four sections associated to the second interior point $p_2$ are of the form $\pm \sigma_{p_1}+\rho, \pm\tilde{\sigma}_{p_1}+\rho$, where $\rho$ is a torsion section of order $2$.
To prove the lemma, it suffices to prove the independence modulo torsion of the two sections $\sigma_{p_1}-\sigma_h$, $\tilde{\sigma}_{p_1}-\sigma_h$ as well as the independence  of the two sections $\sigma_{p_1}-\sigma_h, -\sigma_{p_1}-\sigma_h$. It then remains to consider only the trivial case  of the pair of sections both  equal to $\theta_{p_1}-\theta_h$ which leads to the relation \eqref{E.only-relation}.

The linear independence of the sections $\sigma_{p_1}-\sigma_h, -\sigma_{p_1}-\sigma_h$ follows simply by the already proven independence of the sections $\sigma_{p_1},\sigma_h$. 

As to the remaining case, this
 can be achieved by computing the canonical height quadratic form in the lattice generated by $\sigma_{p_1}, \tilde{\sigma}_{p_1},\sigma_h$. The calculations follow by a systematic procedure in the theory of elliptic surfaces, so we do not perform them here, and refer to   Chapter 9 of \cite{CZPonc}. It turns out that the intersection matrix of the N\'eron-Tate height bilinear form is  (dropping the index we write $\sigma_p$ for $\sigma_{p_1}$)
 \begin{equation*}
 \begin{matrix}
 {} & \sigma_p & \tilde{\sigma}_p & \sigma_h \\
 \sigma_p & 1 & 0 & 1/2  \\
   \tilde{\sigma}_p & 0 & 1 &  {-1}/{2} \\
 \sigma_h &  {1}/{2} &  {-1}/{2} & 1
 \end{matrix}
 \end{equation*}
 From these data the independence modulo torsion of the three sections $\sigma_p,\tilde{\sigma}_p,\sigma_h$ follows immediately, so in particular the follows    differences $\sigma_{p_1}-\sigma_h$, $\tilde{\sigma}_{p_1}-\sigma_h$ turn out to be independent modulo torsion. \qed
 \smallskip

\medskip

\subsubsection{Proof of the finiteness statement in Theorem \ref{T.ritorno}}. The proof of this statement is similar to that just given for  Theorem \ref{T.buca}.  
Given an interior  point $p\in\T^0$, we can define four algebraic sections $\sigma_p, \tilde{\sigma}_p, -\sigma_p,-\tilde{\sigma_p}$ corresponding to the four possible shots from $p$ with given caustic. Once $\sigma_p$ is chosen, $-\sigma_p $ corresponds to changing the orientation of the trajectory, while $\tilde{\sigma}_p$ and $-\tilde{\sigma}_p$ correspond to the other choice for the tangent. These four sections become rational after a single quartic extension of the base.

Suppose that for a certain shot, i.e. for a point  $s$ of the base, $\sigma_s$ gives rise to a trajectory of type (2) and (3), namely passing through $p$ two more times, once with the same tangent but opposite orientation and once with different direction. Then $s$ will be a solution to the system
\begin{equation}
\left\{\begin{matrix}
\sigma_p(s)+m\kappa(s)&=& -\sigma_p(s)\\
\sigma_p(s)+n\kappa(s)&=& \tilde{\sigma}_p(s).
\end{matrix}\right.
\end{equation}
where $\kappa$ still denotes the billiard section.
This system can be written in the form
\begin{equation}\label{E.ritorno}
\left\{\begin{matrix}
2\sigma_p(s) &=& -m\kappa(s)\\
\sigma_p(s)- \tilde{\sigma}_p(s) &=&  n\kappa(s).
\end{matrix}\right.
\end{equation}
similiarly to the system \eqref{E.buca3}. We have already proved that $\sigma_p,\beta$ are linearly independent. It can be  proved, e.g. looking at the canonical height, that, if $p$ is not on an axis of the ellipse, the three sections $\sigma_p,\tilde{\sigma}_p,\kappa$ are also linearly independent (see again Chapter 9 of \cite{CZPonc}, where we calculated the intersection matrix of the images of these sections, from which independence follows at once). From this fact, an application of Theorem \ref{B-C} provides the sought finiteness.

If, however, the point $p$ lies on an axis but not on a focus, then $\tilde{\sigma}_p$ can be obtained from $\sigma_p$ by applying a symmetry of the ellipse $C$, which corresponds to translation by a point of order $2$. Hence the   linear dependence relation between $\sigma_p,\tilde{\sigma}_p$ reads $2\sigma_p=\pm 2\tilde{\sigma}_p$ and every solution $s$ of \eqref{E.ritorno} gives rise to a torsion point for the two sections $\sigma_p,\kappa$, which enables an application of Theorem \ref{CMZ}. 

Suppose now that the trajectory for a given point $s$ of the base is at the same  time of type (1) and (2), so it is periodic and passes through $p$ (infinitely many times) with both orientations. This corresponds to a system
\begin{equation}
\left\{\begin{matrix}
\sigma_p(s) +m\kappa(s) &=& \sigma_p(s)\\
\sigma_p(s) + n\kappa(s) &=& \tilde{\sigma}_p(s) 
\end{matrix}\right.
\end{equation}
which also reads as
\begin{equation}
\left\{\begin{matrix}
m\kappa(s) &=& 0\\
n(\sigma_p - \tilde{\sigma_p})(s) &=&  0.
\end{matrix}\right.
\end{equation}
We have already proved that the two sections $\sigma_p,\tilde{\sigma}_p$ are independent, so in particular their difference is not torsion; also, the billiard section $\kappa$ is not torsion, so we  can  apply Theorem \ref{ThmComplex},  which provides the finiteness of the solutions $s\in B$ of the above system.

In the case a trajectory is of types (1) and (3), we reduce to the system
\begin{equation}
\left\{\begin{matrix}
(\sigma_p +m\kappa)(s) &=& \sigma_p(s)\\
(\sigma_p  +n\kappa)(s) &=& -\sigma_p(s)\\
 \end{matrix}\right.
\end{equation}
which again implies that both $\sigma_p(s)$ and $\kappa(s)$ are torsion points in $\E_s$. Again Theorem \ref{ThmComplex} leads to finiteness.
This concludes the proof. \qed

\medskip

\subsubsection{Proof of Theorem \ref{T.buca} in the  circular case}.
It remains to treat the case of circular billiards in the context of Theorem \ref{T.buca}. Let then $C$ be the unit circle in the complex plane, so that $C$ is the multiplicative group of complex numbers of modulus $1$. Clearly, the billiard map consists in a rotation, in the following sense; if a segment of the trajectory goes from point $\zeta\in C$ to point $\zeta\xi$, for some $\xi\in C$, then  the orbit consists of the points of the form $\zeta\xi^n$, for $n\in\Z$. We can then parametrize the phase space as pairs $(\zeta,\xi)\in C\times C$, where $\xi$ determines the rotation corresponding to the billiard map.

Let us fix two interior points $p_1,p_2$, both distinct from the center; without loss of generality, we can suppose that the hole is represented by the point $1\in C$.   The shots passing through $p_1$ are parametrized by pairs $(\zeta,\xi)\in C\times C$ such that the real line joining $\zeta$ with $\zeta\xi$ contains $p_1$. If $p_1$ is represented by the complex number $\alpha$ with $0<|\alpha|<1$, then the condition that $\zeta,\zeta\xi,\alpha$ are aligned corresponds to the condition that the solution $t$ to the equation $t\zeta+(1-t)\zeta\xi=\alpha$ be real. Since $t$ is given by $t=(\alpha-\zeta\xi)/(\zeta-\zeta\xi)$, such a condition amounts to the relation
\begin{equation*}
\frac{\alpha-\zeta\xi}{\zeta-\zeta\xi} = \frac{\bar{\alpha}-\bar{\zeta}\bar{\xi}}{\bar{\zeta}-\bar{\zeta\xi}}.
\end{equation*}
Since $\bar{\zeta}=\zeta^{-1}$ and $\bar{\xi}=\xi^{-1}$, the above equation can be written as an algebraic equation which becomes, after some simplifications, 
\begin{equation*}
\xi=-\frac{1-\alpha\zeta^{-1}}{1-\bar{\alpha}\zeta}.
\end{equation*}
 Setting $u=\alpha\zeta^{-1}$ the above equation becomes 
 \begin{equation}\label{E.alpha}
 \xi=-\frac{1-u}{1-|\alpha|^2u^{-1}}.
 \end{equation}
 The condition that after $m$ bounces the ball hits $p_2=:\beta$ amounts to a similar equation referred to the pair $(\zeta\xi^m,\xi)$, namely 
 \begin{equation}\label{E.beta}
 \xi=-\frac{1-v}{1-|\beta|^2v^{-1}}.
 \end{equation}
where $v=\beta\zeta^{-1}\xi^{-m}$.

Finally, the ball will fall into the hole after $n$ bounces if and only if $\zeta\xi^n=1$.  This gives the relations
\begin{equation}\label{E.u,v}
u=\alpha\xi^n,\qquad v=\beta\xi^{n-m}.
\end{equation}

The system of equations \eqref{E.alpha}, \eqref{E.beta} defines a curve $\mathcal{X}\subset \G_{\mathrm{m}}^3$, while the relations \eqref{E.u,v} imposes two multiplicative dependence conditions. In other words, we are interested in solutions $(\xi,u,v)$ such that $\alpha^{-1}u,\beta^{-1}v$ lie in the multiplicative subgroup generated by $\xi$. 

\smallskip

We first note that the algebraic curve $\mathcal{X}$, birational to a cubic curve in the plane, is irreducible of genus $1$ if $|\alpha|\neq|\beta|$, while in the special case $|\alpha|=|\beta|$ it is the union of a conic and one line. \smallskip

Let us first consider  the generic case, when $|\alpha|^2\neq |\beta^2|$. We claim that in this case the curve $\mathcal{X}\subset\G_{\mathrm{m}}^n$ is not contained in any translate of an algebraic subgroup. Indeed, first note that $v$ is quadratic over $\C(u)=\C(u,\xi)$, while $u$ is quadratic over $\C(v,\xi)=\C(v)$. Now, starting from an equation of the form $\xi^a\cdot u^b\cdot v^c=\mathrm{const.}$, supposing e.g. $c\neq 0$,  we would deduce that the conjugate $v'$ of $v$ differs multiplicatively from $v$ by a root of unit. Writing $v'=\zeta v$, with $\zeta^c=1$, we obtain that its trace is $(1+\zeta)v\in \C(u)$, forcing $\zeta=-1$ and its trace being zero. However, from the quadratic equation \eqref{E.beta} follows that the trace is $1+\xi\neq 0$. 

We can then apply Theorem \ref{T.BMZ}, which provides the finiteness of the points $(\xi,u,v)\in\mathcal{X}$ satisfying the dependence conditions \eqref{E.u,v}. This concludes the proof in the case $|\alpha|^2\neq |\beta^2|$.
\smallskip

Let us consider now the special case $|\alpha^2|=|\beta^2|=:r$.  In this case,   we have
\begin{equation}\label{E.degenere}
u=v \qquad \mathrm{or} \qquad uv+r-r(u+v)=0.
\end{equation}
 
The first possibility $u=v$ leads to a curve contained in a proper algebraic subgroup of $\G_{\mathrm{m}}^3$. Hence Theorem \ref{T.BMZ} does not apply. In this case, however, the relations \eqref{E.u,v} are equivalent to
$\xi^m=\beta/\alpha$. We have then to solve equation \eqref{E.alpha} (or the equivalent equation \eqref{E.beta}) under the condition $u=\alpha\xi^n$, knowing that $\xi$ is an $m$-th root of $\beta/\alpha$. If $\beta/\alpha$ is a root of unity, we can apply directly Theorem \ref{T.torsion}, since the curve defined in $\G_{\mathrm{m}}^2$ by equation \eqref{E.alpha} is not an algebraic subgroup. Otherwise we can either apply the full Laurent's Theorem, using for $\Gamma$ the division group of the cyclic group generated by $\alpha/\beta$ or use the following trick: given a solution $(\xi,u)=(\xi,\alpha\xi^n)$ with $\xi^m=\alpha/\beta$ we obtain other solutions conjugating over $\C(\alpha,\beta)$. The conjugate solutions are of the form $(\theta\xi,\theta^n\xi)$ for some root of unit $\theta$. After taking two conjugates, we obtain two more equations. We then obtain three algebraic equations in $(\xi,u,\theta,\theta')$ to be solved in roots of unity $\theta,\theta'$. Eliminating $\xi,u$, one obtains a single algebraic equation in $\theta,\theta'$ to which Theorem \ref{T.torsion} applies. A third way to conclude in this case is applying a theorem of A. Schinzel on irreducibility of lacunary polynomials \cite{S}, which constitutes a particular case of Theorem \ref{T.BMZ} (see Remark after Theorem 2 in \cite{BMZ}).

Let us now consider the second possibility in the degenerate case $|\alpha|=\beta|$, i.e. the second equality in \eqref{E.degenere}. In that case, after rewriting \eqref{E.alpha}, \eqref{E.beta} and the second equality in \eqref{E.degenere} we arrive at the system of equations 
\begin{equation*}
\left\{\begin{matrix}
\xi&=&\frac{u-1}{u-r}\cdot u\\ \xi&=&\frac{v-1}{v-r}\cdot v \\ u&=& \frac{v-1}{v-r}\cdot r
\end{matrix}
\right.
\end{equation*}
The above equations imply $uv\xi^{-1}=r$, so our curve is contained in a non-torsion translate of an algebraic subgroup. In this situation we can either conclude by applying the full Maurin's theorem, or argue more simply in a way similar to the previous one, since  from the last equality and relations \eqref{E.u,v} we can deduce that $\xi^{2n-m-1}=r\alpha^{-1}\beta^{-1}$.
\qed

\section{Proof of Theorem \ref{T.P_2}} \label{S.P_2}

We recall the notation in the statement of Theorem \ref{T.P_2}. We are given three lines $L_1,L_2,L_3$ in the plane and we are looking for the (complex) points $P\in L_1$  such that for some pair $(m,n)\in\Z^2$, $\beta^m(P)\in L_2, \beta^n(P)\in L_3$. It is useful to write this condition in the form
$P\in L_1\cap \beta^{-m}(L_2)\cap \beta^{-n}(L_3)$. In view of our assumption that $L_1,L_2,L_3$ lie in different orbits of $\beta$, the three lines $L_1,\beta^{-m}(L_2),\beta^{-n}(L_3)$ are pairwise distinct and we shall have $L_1\cap \beta^{-m}(L_2)\cap \beta^{-n}(L_3)=\{P\}$. 
\smallskip

The  automorphism $\beta$ is given by a $3\times 3$ invertible matrix, defined  up to scalars. The powers of $\beta$ form a cyclic subgroup of $\mathrm{PGL}_3(\C)$; its Zariski-closure is an algebraic  sub-group of $\mathrm{PGL}_3$; we shall denote by $G_\beta$ the connected component of this algebraic group containing the identity.

For the algebraic group $G_\beta$, as a linear algebraic group, there are six possible isomorphism classes: $\{1\}, \G_a,\G_m,\G_a\times \G_m, \G_m^2$. The first case arises when $\beta$ has finite order; the last case is the generic one, holding outside a countable union of proper Zariski-closed subsets of $\mathrm{PGL}_3$, and corresponds to a diagonalizable matrix which, in suitable normalized form, has eigenvalues $1,\lambda_1,\lambda_2$ with $\lambda_1,\lambda_2$ multiplicatively independent.

\smallskip

The aim of this sub-section is the proof of the following proposition, of which Theorem \ref{T.P_2} is an immediate consequence:

\begin{prop}\label{P.P_2}
Let $L_1,L_2,L_3$ be three lines in $\P_2$, $\beta\in\mathrm{PGL}_3(\C)$ be a projective automorphism. Suppose that the three lines $L_1,L_2,L_3$ belong to distinct  orbits for the group generated by $\beta$.  Denote by $\mathcal{O}=\mathcal{O}(L_1,L_2,L_3;\beta)$ the set of orbits for the action of $\beta$ on $\P_2(\C)$ which intersect each of the three lines $L_1,L_2,L_3$. Let $G_\beta\subset\mathrm{PGL}_3$ be the algebraic group determined by $\beta$ as above.

Then:

\begin{enumerate}
\item If $G_\beta=\G_m^2$ then $\mathcal{O}$ is finite.

\item If $G_\beta=\G_m$ and $\mathcal{O}$ is infinite, then at least one of the three lines, say $L_1$, contains a fixed point for $\beta$. Also, the set of pairs 
$(m,n)$ such that there exists a point $P\in L_1$ such that $\beta^m(P)\in L_2, \beta^n(P)\in L_3$ is the union of finitely many cosets of subgroups in $\Z^2$. 

Also, either each of the three lines contains a fixed point and there exists an automorphism $\gamma\in\mathrm{PGL}_3(\C)$ such that the  group $\Gamma=<\beta,\gamma>$ is commutative and $\mathcal{O}$ is contained in finitely many orbits for $\Gamma$, or only one line contains a fixed point. In this last case, the set of points $P$ on such a line whose orbit lies in $\mathcal{O}$ is given, in suitable affine coordinates,  by the values of a binary linear recurrence sequence.

\item If $G_\beta=\G_a\times\G_m$ and $\mathcal{O}$ is infinite, then
two among the three   lines $L_1,L_2,L_3$, say $L_1,L_2$,  contain  a fixed point for $\beta$. Also, the set of pairs $(m,n)\in\Z^2$ such that there exists a point $P\in L_1$ such that $\beta^m(P)\in L_2, \beta^n(P)\in L_3$ is the union of a finite set and an infinite set of the form $m=a\lambda^n+bn$, where $a,b\in\Q^*$. Also, there exists an automorphism $\gamma\in\mathrm{PGL}_3(\C)$ such that the  group $\Gamma=<\beta,\gamma>$ is commutative and $\mathcal{O}$ is contained in finitely many orbits for $\Gamma$.

\item If $G_\beta=\G_a$ and $\mathcal{O}$ is infinite, then the set of fixed points for $\beta$ contains a line, so each of the three lines $L_1,L_2,L_3$ contains a fixed point. All the pairs $(m,n)\in \Z^2$ such that there exists a point $P\in L_1$ such that $\beta^m(P)\in L_2$ and $\beta^n(P)\in L_2$ lie on a single line in $\Z^2$. Also, there exists an automorphism $\gamma\in\mathrm{PGL}_3(\C)$ as in case (3).     

\end{enumerate}

\end{prop}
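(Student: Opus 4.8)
\emph{Strategy.} The plan is to put $\beta$ in normal form according to the type of $G_\beta$ and to convert the condition defining $\mathcal O$ into a single polynomial--exponential equation in the pair of integers $(m,n)$. Write each line $L_i$ by a linear form $\ell_i$. The orbit of a point $P$ meets $L_1,L_2,L_3$ exactly when there are integers $m,n$ with $P\in L_1\cap\beta^{-m}(L_2)\cap\beta^{-n}(L_3)$; since the three lines lie in distinct orbits, this intersection is, for all but finitely many $(m,n)$, either empty or a single point $P=P(m,n)$, and it is nonempty precisely when the determinant
\[
\Delta(m,n):=\det\begin{pmatrix}\text{coeffs of }\ell_1\\ \text{coeffs of }\ell_2\circ\beta^{m}\\ \text{coeffs of }\ell_3\circ\beta^{n}\end{pmatrix}=0 .
\]
If $\beta$ is diagonalised as $\mathrm{diag}(1,\lambda_1,\lambda_2)$ (case $G_\beta=\G_{\mathrm m}^2$), expanding $\Delta$ gives a sum of at most six monomials $c_\chi\,\lambda_1^{a_\chi m+c_\chi n}\lambda_2^{b_\chi m+d_\chi n}$; in the remaining cases, where $\beta$ carries a Jordan block, the entries of $\beta^m$ are mixed polynomial--exponential expressions $m^j\lambda^m$ (with $\lambda=1$ on the unipotent part), so $\Delta(m,n)$ becomes a polynomial--exponential function of $(m,n)$. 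Throughout we may assume $L_1,L_2,L_3,\beta$ defined over a field finitely generated over $\Q$, so all multiplicative groups arising below have finite rank.

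\emph{The generic case $G_\beta=\G_{\mathrm m}^2$.} Here $\lambda_1,\lambda_2$ are multiplicatively independent, the map $(m,n)\mapsto(\lambda_1^{a_\chi m+c_\chi n}\lambda_2^{b_\chi m+d_\chi n})_\chi$ is a homomorphism $\Z^2\to\G_{\mathrm m}^{r}$ with finitely generated image, and $\Delta(m,n)=0$ says this image point lies on the fixed hyperplane $\sum_\chi c_\chi X_\chi=0$. Applying Theorem \ref{T.tori} (equivalently the $S$-unit equation theorem) to this situation, the set of solutions $(m,n)\in\Z^2$ is contained in a finite union of cosets of subgroups of $\Z^2$, and the solutions for which no proper subsum of $\Delta$ vanishes are finitely many. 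The crux is then to match each vanishing proper subsum of $\Delta$ with a geometric coincidence: such a vanishing forces the three lines $L_1,\beta^{-m}(L_2),\beta^{-n}(L_3)$ to be specially placed with respect to the $\beta$-fixed flags (all through one of the two fixed points, or tangent to a fixed pencil), which, under the hypotheses that no $L_i$ contains a fixed point and that the $L_i$ lie in distinct orbits, happens only for $(m,n)$ in a proper sublattice producing no new orbit, or not at all. Hence only the finitely many non-degenerate $(m,n)$ survive, and $\mathcal O$ is finite, proving (1).

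\emph{The degenerate cases.} When $G_\beta$ is $\G_{\mathrm m}$, $\G_a\times\G_{\mathrm m}$, or $\G_a$, one argues in the same way, but $\Delta(m,n)=0$ is now a genuine polynomial--exponential identity, so one combines Theorem \ref{T.tori} with results on such equations (including the Skolem--Mahler--Lech theorem for the purely polynomial contributions of unipotent Jordan blocks) to conclude that the solution set of $(m,n)$ is a finite union of cosets of subgroups of $\Z^2$, together with, in the $\G_a\times\G_{\mathrm m}$ case, one exotic family $m=a\lambda^n+bn$. As $G_\beta$ degenerates, the fixed locus of $\beta$ grows (a fixed point and a fixed line for $\G_{\mathrm m}$ or $\G_a\times\G_{\mathrm m}$, a whole line of fixed points for $\G_a$), and the analysis of which subsums of $\Delta$ may vanish shows that an infinite $\mathcal O$ is possible only when, respectively, one, two, or all three of the $L_i$ pass through fixed points, in the stated pattern; it also yields the asserted shape of the set of admissible $(m,n)$. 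Finally, when $\mathcal O$ is infinite, the commuting automorphism $\gamma$ is produced inside the connected commutative group $G_\beta$ (or a one-parameter subgroup of it fixing the relevant points): the family of orbits in $\mathcal O$ is swept out by a coset of $(m,n)$'s, a suitable one-parameter element $\gamma\in G_\beta$ carries $L_1$ through this family, $\Gamma=\langle\beta,\gamma\rangle$ is commutative since $G_\beta$ is, and $\mathcal O$ falls into finitely many $\Gamma$-orbits; the sub-case of $G_\beta=\G_{\mathrm m}$ in which only $L_1$ contains a fixed point is exactly the one admitting no such sweeping $\gamma$, and there the relevant points of $L_1$ are the terms of a binary linear recurrence read off from the solved exponential equation.

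\emph{Main obstacle.} The diophantine input --- reduction to unit equations and Skolem--Mahler--Lech --- is routine once the normal forms are in place. The real work is the bookkeeping matching each possible vanishing subsum of $\Delta$ to a geometric degeneracy and checking it against the hypotheses, together with the passage from ``$(m,n)$ lies in finitely many cosets of subgroups of $\Z^2$'' to ``$\mathcal O$ is finite'' (resp. ``$\mathcal O$ lies in finitely many $\Gamma$-orbits''): a positive-dimensional coset of parameters could a priori yield infinitely many distinct orbits, and one must show either that it does not or that it forces one of the excluded configurations. Handling the four group types simultaneously, and the several sub-cases inside $G_\beta=\G_{\mathrm m}$, is where the argument is heaviest; note also that the use of Theorem \ref{T.tori} makes the finitely many surviving orbits, in general, ineffective.
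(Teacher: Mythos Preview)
Your overall architecture matches the paper's: normal forms for $\beta$, the determinant $\Delta(m,n)=0$, and Laurent's theorem (Theorem~\ref{T.tori}) in the multiplicative cases, followed by the geometric bookkeeping you correctly flag as the heart of the matter. For $G_\beta=\G_{\mathrm m}^2$ your sketch is essentially what the paper does (the paper phrases it via a hypersurface $V\subset\G_{\mathrm m}^4$ rather than vanishing subsums, but these are equivalent). One small slip: in part~(1) you invoke ``no $L_i$ contains a fixed point'' as a hypothesis, but Proposition~\ref{P.P_2} does \emph{not} assume this; the case analysis must itself dispose of the configurations where some $L_i$ passes through a coordinate point, which the paper does via Remarks~\ref{R.inv} and~\ref{R.fix}.

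The genuine gap is in the degenerate cases. Your proposed tool there, ``Theorem~\ref{T.tori} plus Skolem--Mahler--Lech'', does not do the job. In Case~3 ($G_\beta=\G_a\times\G_{\mathrm m}$) the equation is $\lambda^m P(n)-\lambda^n Q(m)=R(m,n)$ with $P,Q,R$ linear; this is a genuinely two-variable polynomial--exponential equation, and SML (a one-variable result on zeros of linear recurrences) says nothing about it. The paper instead uses a direct growth argument: choosing an absolute value with $|\lambda|_\nu>1$ and comparing sizes of the two sides along a subsequence with $m>n$ and $m-n\to\infty$ forces a contradiction unless one of $m,n,m-n$ is bounded, after which Remark~\ref{R.P,m,n} finishes. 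The exotic family $m=a\lambda^n+bn$ then drops out by solving explicitly when $b_3=0$. In Case~4 with a single $3\times3$ Jordan block the determinant is a cubic polynomial in $(m,n)$ with leading part $a_1b_1c_1\,mn(m-n)$; here there is no exponential at all, and the paper appeals to Runge's theorem (three rational points at infinity) to bound the integral points when the cubic is irreducible, handling the reducible case by inspecting the lines. None of this is captured by the tools you name. Once you insert these two arguments, your outline becomes the paper's proof.
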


In each case, there is an effective procedure to detect whether the set $\mathcal{O}$ is infinite: when it is so, $\mathcal{O}$ is the union of finitely many infinite families, which can be explicitly described, and a finite set.

Examples of infinite families in cases (2), (3) and (4) will be constructed.

Let us begin the proof of Proposition \ref{P.P_2}, by fixing the notation which will be used.

In projective coordinates, the three lines $L_1,L_2,L_3$ are defined by the vanishing of a linear form in three variables, which will also be denoted by $L_1,L_2,L_3$. Let us write the three linear forms as
\begin{equation*}
\begin{matrix}
L_1(x_1,x_2,x_3)&=&a_1x_1+a_2x_2+a_3x_3\\
 L_2(x_1,x_2,x_3)&=&b_1x_1+b_2x_2+b_3x_3\\
  L_3(x_1,x_2,x_3)&=&c_1x_1+c_2x_2+c_3x_3
  \end{matrix}
\end{equation*}

Before proceeding to the proof, let us make three useful remarks, which will be applied several times throughout the proof.

\begin{small}\begin{rem}\label{R.P,m,n}
 We are looking for triples $(P,m,n)$ with $P\in L_1$, $m,n\in\Z$, such that
\begin{equation}\label{E.P,m,n}
P\in L_1,\quad \beta^{m}(P)\in L_2,\quad \beta^n(P)\in L_3.
\end{equation}
We are assuming that $L_1,L_2,L_3$ have distinct $\beta$-orbits. Hence every pair $(m,n)\in\Z^2$ can give rise to at most one triple $(P,m,n)$ satisfying the above relation. Also, if $m,n$ or $m-n$ bounded there are at most finitely many points  $P\in\P_2$ giving rise to a solution $(P,m,n)$  of \eqref{E.P,m,n} .  
\end{rem}\end{small}

\begin{small}\begin{rem}\label{R.inv}
If the first line $L_1$ is invariant under $\beta$, then there can be at most one   $\beta$-orbit intersecting $L_1$, $L_2$ and $L_3$. Indeed, if there is one such orbit, up to changing $L_2$, $L_3$ by suitable pre-images of themselves under powers of $\beta$, we can suppose that $L_1\cap L_2\cap L_3=\{P\}$ is non-empty. Then  the only orbit intersecting all the three lines is the orbit of $P$.  Of course, the same remark applies when $L_2$ or $L_3$ is invariant. 
\end{rem} \end{small}
\smallskip  

\begin{small}\begin{rem}\label{R.fix}
If two of the three lines intersect in a point $P$  which is fixed for $\beta$, then the  orbit of $P$ is the only orbit which  can intersect all the three lines, and actually $P$ would be the unique point, if any, belonging to $L_1$ and possessing images both in $L_2$ and in $L_3$.   
\end{rem}\end{small}

To prove Proposition \ref{P.P2}, we shall distinguish several cases, according to the nature of the group $G_\beta$, which in turn depends on the Jordan form of a matrix associated to $\beta$.

\smallskip

{\it Case 1: $G_\beta=\G_m^2$}. As mentioned, this is the generic case. 
It means that the automorphism is diagonalizable with pairwise distinct eigenvalues $\lambda_1,\lambda_2,\lambda_3$ such that $\lambda_1/\lambda_3, \lambda_2/\lambda_3$ are multiplicatively independent. We can choose suitable coordinates so that the matrix is in diagonal form, and also normalize it so that one eigenvalue equals  $1$, so the matrix of $\beta$ will be of the form 
\begin{equation}\label{E.beta-diagonal}
\left(\begin{matrix} \lambda_1 & 0& 0\\ 0& \lambda_2 & 0\\ 0&0&1\end{matrix}\right).
\end{equation}
Given a pair of positive integers $m,n$, 
the existence of a point $x=(x_1:x_2:x_3)\in\P_2(\C)$ such that  $L_1(x)=L_2(\beta^m(x))=L_3(\beta^n(x))=0$ amounts to the relation
\begin{equation*}
\det \left(\begin{matrix} a_1 & a_2 & a_3 \\ b_1\lambda_1^m & b_2 \lambda_2^m & b_3  \\ c_1\lambda_1^n & c_2\lambda_2^n & c_3
\end{matrix}\right)=0
\end{equation*}
Let us then consider the subvariety $V$ of $\G_m^4$ defined by the equation
\begin{equation}\label{E.V}
V:\qquad \det \left(\begin{matrix} a_1 & a_2 & a_3 \\ b_1 x_1 & b_2 x_2 & b_3  \\ c_1 y_1 & c_2 y_2 & c_3
\end{matrix}\right)=0
\end{equation}
We are looking for points $(x_1,x_2,y_1,y_2)=(\lambda_1^m, \lambda_2^m,\lambda_1^n,\lambda_2^n)\in V\cap \Gamma$, where $\Gamma\subset\G_m^4(\C)$ is the multiplicative group generated by the points $(\lambda_1,\lambda_2,1,1)$ and $(1,,1,\lambda_1,\lambda_2)$. 
\medskip

Suppose first that the variety $V$ is the whole of $\G_m^4$. Then in particular for every choice of $m,n\in \Z$, the three lines $L_1, \beta^{-m}(L_2),\beta^{-n}(L_3)$ are concurrent. This can happen only if their intersection point is fixed for the group, hence for $\beta$, and we conclude via Remark \ref{R.fix}.  
\smallskip

We can then suppose that $V$ is a  hypersurface of $\G_m^4$.  

Let $W\subset V$ be a positive dimensional irreducible component of the Zariski-closure of $V\cap\Gamma$ in $V$. By Theorem \ref{T.tori}, it is a translate of a  subtorus of $\G_m^4$. In particular, $W$ is contained in a translate of a maximal proper subtorus, so its points satisfy a monomial relation of the form
\begin{equation*}
x_1^ax_2^b y_1^cy_2^d=\xi
\end{equation*}
for some non-zero complex number $\xi\in\C^*$ and some non-zero vector $(a,b,c,d)\in\Z^4$. By our assumptions $\Gamma\cap W$ is dense in $W$, so in particular we have infinitely many solutions $(m,n)\in\Z^2$ to the equation
\begin{equation}\label{E.dip-m,n}
\lambda_1^{am+cn}\lambda_2^{bm+dn}=\xi.
\end{equation}
Since $\lambda_1,\lambda_2$ are multiplicatively independent, the above relation implies a pair of relations of the form
\begin{equation*}
\left\{\begin{matrix}
am+cn &=& e\\
bm+dn &=& f
\end{matrix}\right.
\end{equation*}
for suitable fixed integers $e,f\in\Z$ and all $m,n$.   

Then either the above relations are independent, so there exists at most one pair $(m,n)$ satisfying them, or  we can parametrize all the pairs $(m,n)$ as $m=m_0-ct, n=n_0+at$
for suitable $m_0,n_0\in \Z$, where $t$ varies in $\Z$. Put  $b_1^*=b_1\lambda_1^{m_0}, b_2^*=b_2\lambda_2^{m_0}, c_1^*=c_1\lambda_1^{n_0} $ and $c_2^*=c_2\lambda_2^{n_0}$. Note that $b_1^*, b_2^*,b_3$ (resp. $c_1^*,c_2^*,c_3$) are the coefficients of the linear form vanishing on $\beta^{-m_0}(L_2)$ (resp. on $\beta^{-n_0}(L_3)$. Hence by our assumptions the line vectors $(b_1^*,b_2^*,b_3), (c_1^*,c_2^*,c_3)$ are linearly independent.  

By Remark \ref{R.P,m,n}, if $a$ or $b$ vanishes, we are done. Then suppose none of them vanishes, so we obtain 
 that identically (for $(x_1,x_2)\in\G_m^2$)
\begin{equation}\label{E.matrix-x_1,x_2}
\det \left(\begin{matrix} a_1 & a_2 & a_3 \\ b_1^*x_1^{-bt}  & b_2^* x_2^{-bt} & b_3  \\ 
c_1^*x_1^{at} & c_2^*x_2^{at}  & c_3
\end{matrix}\right)=0.
\end{equation}

Now, if the third column vanishes, the point $(0:0:1)$, which is a fixed point for the group, is the intersection point $L_1\cap \beta^{-m}(L_2)\cap \beta^{-n}(L_3)$, hence of $L_1\cap L_2\cap L_3$ and by Remark \ref{R.fix} we are done. Analogously, if the second column vanishes, then $(0:1:0)$, which is also a fixed point for the group, would be the intersection point of $L_1\cap \beta^{-m}(L_2)\cap \beta^{-n}(L_3)$, while the vanishing of the first column would give $L_1\cap \beta^{-m}(L_2)\cap \beta^{-n}(L_3)=(1:0:0)$, and again by Remark \ref{R.P,m,n} we are done.

Suppose then that no column in the   matrix in \eqref{E.matrix-x_1,x_2}  vanishes. 

In view of Remark \ref{R.inv}, we can also suppose that $(a_2,a_3)\neq (0,0)$.

If the second column in \eqref{E.matrix-x_1,x_2} is a multiple of the third one for every $x_2\in\G_m$, then since, as we remarked,  we cannot have neither $(a_2,a_3)=0$ nor the vanishing of the second column, we shall have $a_2\neq 0, a_3\neq 0$ and  so $b_2^*x_2^{-bt}$ as well as $c_2^*x_2^{at}$ would be constant. This implies $b_2^*=c_2^*=0$, so $\beta^{-m}(L_2)\cap \beta^{-n}(L_3)$ would be the fixed point $(0:1:0)$. But this point would then belong to $L_2\cap L_3$ and by Remark \ref{R.fix} we are done.

Finally, we can suppose that the last two columns are generically independent. Then, since the determinant in \eqref{E.matrix-x_1,x_2} is identically zero, the first column is, for every $x_1\in\G_m$,    a multiple of a constant vector. This can happen only if two of the three coefficients $a_1,b_1^*,c_1^*$ vanish, or if $a_1$ vanishes and $a=b$. In the first case two lines would meet in a fixed point for the group, and once again we apply Remark \ref{R.fix}. In the second case we have $a-b=0$, i.e. $(m,n)=(m_0,n_0)+(t,t)$ and by Remark \ref{R.P,m,n} we obtain the finiteness of the relevant points $P$.

This achieves the proof in Case 1.
\smallskip

{\it Case 2: $G_\beta=\G_m$}. In this case the Zariski closure of the cyclic group generated by $\beta$ might be a disconnected algebraic group.  Observe, however, that the conclusion does not change if we replace $\beta$ by one of its powers with non-zero exponent, and replace the three lines by suitable elements in their $\beta$-orbit. Hence we can and shall suppose that the Zariski closure of the cyclic group generated by $\beta$ itself is isomorphic to $\G_m$, so that $\beta$ can be expressed in suitable coordinates by a matrix of the form
\begin{equation*}
\left(\begin{matrix} \lambda^u &0&0\\ 0&\lambda^v &0 \\ 0&0&1\end{matrix}
\right)
\end{equation*}
where $\lambda\in\C^*$ is not a root of unity and $u,v$ are non-zero integers. 

We repeat the argument developed in Case 1 (and the notation for the equations of the three lines): consider the variety $V\in\G_m^2$ defined by the equation
\begin{equation*}
\det \left(\begin{matrix} a_1 & a_2 & a_3 \\ b_1 x_1^u & b_2x_1^v & b_3\\ c_1 x_2^u &c_2x_2^v &c_3 \end{matrix}
\right)
\end{equation*}
By the same argument as in Case 1, if the determinant vanishes identically, the above matrix contains at least two zeros in one single row or one single column, so we conclude  by applying one of the Remarks \ref{R.fix}, \ref{R.inv}. So we assume that $V$ is a curve in $\G_m^2$ and look for its points of the form $(x_1,x_2)=(\lambda^m,\lambda^n)$. In principle, the equation of $V$ involves monomials of the form $x_1^ix_2^j$ where for the exponents $(i,j)$ six pairs are possible, namely
\begin{equation*}
(u,0),\, (v,0)\, (0,u)\, (0,v),\, (u,v),\, (v,u).
\end{equation*}
It is easy to see that if $u$ or $v$ or $u-v$ vanishes, then $V$ is indeed a translate of a subgroup, but the resulting equation in $(\lambda^m,\lambda^n)$ leads to a bound on either $m$ or $n$ or $n-m$ and we conclude by Remark \ref{R.P,m,n}. 

So we can suppose that $u,v,u-v$ are all non-zero, which implies that the six pairs of integers displayed above are pairwise distinct. Then, 
in order that the determinant equation involves only two monomials, so that it defines a translate of a sub-torus of $\G_m^2$, it is necessary and sufficient that three coefficients in the above matrix vanish; as we already remarked, these coefficients must belong to different lines and rows. 

Then, up to permuting the coordinates we can suppose that the vanishing coefficients are $a_3,b_2,c_1$ and we obtain the equation $b_1c_3a_2x_1^u=+c_1a_2b_3x_2^u=0$. This equation can have infinitely many solutions of the form $(x_1,x_2)=(\lambda^m,\lambda^n)$. In this case the pair $(m,n)$ satisfies the equation 
\begin{equation*}
mu+n(u-v)=d
\end{equation*}
 where $d\in\Z$ is fixed. It then follows that the pairs $(m,n)$ are parametrized as
 \begin{equation*}
 m=m_0+(v-u)t,\, n=n_0+ut
 \end{equation*}
for suitable $m_0,n_0$ and varying $t$. 
Then, after replacing $L_2$ by $\beta^{m_0}(L_2)$ and $L_3$ by $\beta^{n_0}(L_3)$, we obtain an infinite family of solutions $(P,(v-u)t,ut)$. The solution corresponding to $t=0$ provides a point $P$ in the intersection $L_1,\cap L_2\cap L_3$. In our coordinates, $L_1$ contains the fixed point $(1:0:0)$, $L_2$ th fixed point $(0:1:0)$ and $L_3$ the fixed point $(0:0:1)$. After change of coordinates, which will not affect the matrix of $\beta$, we can suppose that $P=(1:1:1)$. Hence $L_1$ (resp. $,L_2,L_3$) must be defined by the   equation  $y=z$  (resp. $x=z$, $x=y$). The points of $L_1$
\begin{equation*}
P_t:=(1:\lambda^{(vu-u^2)t}:\lambda^{(vu-u^2)t})
\end{equation*}
have the properties that for $m=t(v-u)$ and $n=tu$, $\beta^m(P_t)\in L_2$ and $\beta^n(P_t)\in L_3$.
Note that all the points $P_t$ lie in a same orbit for the action of a diagonal matrix, hence for an automorphism commuting with $\beta$. 

It remains to consider the other situation,  of an equation involving more than two monomials and nevertheless  defining a translate of an algebraic subgroup. Namely, the determinant might be a reducible polynomial  in $\C[x_1^{\pm 1},x_2^{\pm}]$,  one of its factor being a binomial. This  case can occur for instance when $u=-v$ (see Example \ref{Ex.u=-v}) below. In this case too the conclusion that the relevant pairs $(m,n)$ belong to finitely many lines in $\Z^2$ still hold. We omit the proof that in this case the corresponding points $P\in L_1$ form a binary linear recurrent sequence. This fact will be clear from the concrete example presented in \ref{Ex.u=-v}.

\smallskip

{\it Case 3: $G_\beta=\G_a\times\G_m$}. In this case we can suppose that the matrix of $\beta$ takes the form
\begin{equation*}
\left(\begin{matrix}
1& 1&0\\ 0&1 & 0\\ 0&0&\lambda
\end{matrix}\right)
\end{equation*}
for some $\lambda\in\C^*$ which is not a root of unity. Its powers are
\begin{equation*}
\left(\begin{matrix}
1 & n &0\\ 0& 1 & 0\\ 0&0&\lambda^n
\end{matrix}\right).
\end{equation*}
The presence of a triple $(P,m,n)$ satisfying \eqref{E.P,m,n} is equivalent to the equation
\begin{equation*}
\det \left(\begin{matrix} a_1 & a_2 & a_3 \\ b_1  &  b_1 m +b_2  & b_3\lambda^m  \\ 
c_1   &  c_1 n +c_2     & c_3 \lambda^n
\end{matrix}\right)=0.
\end{equation*}

Then the above equation reads as
\begin{equation}\label{E.separata}
\lambda^m P(n)-\lambda^n Q(m)=R(m,n)
\end{equation}
for suitable polynomials $P(X),Q(Y),R(X,Y)\in\C[X,Y]$  of degree $\leq 1$. 

 Suppose first that $b_3$ and $c_3$ are both non-zero, so that we can take them to be equal to $1$. Then 
 \begin{equation*}
  P(X)=a_1c_1X+a_1c_2-c_1a_2, \quad  Q(X)=a_1b_1X+a_1b_2-a_2c_1.
 \end{equation*}
 If one of the two polynomial vanishes then one couple among  $(a_1,a_2)$, $(a_1,b_1)$ and $(a_1,c_1)$ must vanish. Then we can apply either  Remark \ref{R.inv} (in the first case) or Remark \ref{R.fix} (in the second and third cases).

  Then we can suppose that the two polynomials $P(X),Q(X)$ are both non-zero. We claim that then either $m$, or $n$ or $m-n$ is bounded, which permits to conclude via Remark \ref{R.P,m,n}. Indeed, suppose that an infinite sequence of solutions $(m,n)$ to equation \eqref{E.separata} exists with $m,n,|m-n|$ all tending to infinity. We can extract a subsequence for which $m>n$ and $m/n$ converges in $[1,+\infty]$. Dividing by $\lambda^n$ we get the equation 
  \begin{equation*}
  P(n)\lambda^{m-n}=Q(m)+R(m,n)\lambda^{-n}.
  \end{equation*}
 Now, since $\lambda$ is not a root of unity, there exists an absolute value $|\cdot|_\nu$ of a field containing $\lambda$ and all the involved coefficients with  $|\lambda|_\nu >1$ (if $\lambda$ is transcendental this is  possible since it can be sent by a possibly discontinuous automorphism of $\C$ to any other transcendental number; if $\lambda$ is algebraic, this follows from  the  fact that its height is $>1$).   Now, if $m/n\to 1$, we can bound $|R(m,n)|_\nu\ll 1+n$ and so clearly $R(m,n)\lambda^{-n}\to 0$, while $|Q(m)|_\nu \ll m$ so that the above relation cannot hold for large $n$ and large $m-n$. If on the contrary  $m/n$ does not tend to $1$, then $m-n\gg m$ and again the above relation cannot hold for large $m$. 
  \smallskip
  
It remains then to consider the case in which one between $b_3$ and $c_3$ vanishes. Note that we can suppose that exactly  one of them vanishes, otherwise the two lines $L_2,L_3$ would meet in the fixed point $(0:0:1)$ and we would conclude via Remark \ref{R.fix}. For the same reason we can also assume  that $a_3\neq 0$.

Suppose then that  $b_3=0$, $c_3=a_3=1$.  The equation \eqref{E.separata} reads
\begin{equation*}
\lambda^n(a_1b_2-a_2b_1+a_1b_1m)+(b_1c_2-b_2c_1+b_1c_1(n-m))=0
\end{equation*}
which can be written as
\begin{equation*}
m=\frac{A\lambda^n+Bn+C}{D\lambda^n+E}
\end{equation*}
where $A=a_2b_1-a_1b_2, B=-b_1c_1, C=b_1c_2-b_2c_1, D=a_1b_1, E=-b_1c_1$. Recall that we are supposing that the above ratio on the right-hand side, depending only on $n$, takes infinitely many integral values. By taking a subsequence of these integer numbers,  and looking at its limit, we conclude that $D=0$. Then $m$ is of the form $A\lambda^n+Bn+C$, as predicted by the proposition we are proving. Note also that $B\neq 0$, otherwise either $b_1=0$, so $L_2$ would be invariant for $\beta$, or $b_1\neq 0$, $c_1=0$ and, in view of $D=0$,   we would have $a_1=c_1=0$, so that $L_1\cap L_3$ would be a fixed point. Again, Remarks \ref{R.inv} or \ref{R.fix} would provide finiteness.

 \smallskip
 
 {\it Case 4}: $G_\beta=\G_a$. In this case the automorphism $\beta$ can admit either just one fixed point, or infinitely many (forming a line).  In the first case the matrix of $\beta$ is conjugate to the matrix
 \begin{equation*}
\left(\begin{matrix} 1 &1&0\\ 0&1 &1 \\ 0&0&1\end{matrix}
\right)
\end{equation*}
 so that its $n$-th power reads
 \begin{equation*}
\left(\begin{matrix} 1 &n&{n}\choose{2}\\ 0&1 &n \\ 0&0&1\end{matrix}
\right)
\end{equation*}
We now prove that in this situation the set $\mathcal{O}$ is finite.
With the above notation, the determinant equation becomes
\begin{equation*}
\det\left(\begin{matrix} a_1 &a_2&a_3\\ b_1& b_1m+b_2 & b_1 {m \choose 2} +b_2m+b_3 \\ c_1& c_1n+c_2 & c_1 {n \choose 2} +c_2n+c_3
\end{matrix}
\right)=0
\end{equation*}
which reads
\begin{equation}\label{E.cubica}
a_1b_1c_1(m^2n-mn^2)=P(m,n)
\end{equation}
for a polynomial $P(X,Y)\in\C[X,Y]$ of degree $\leq 2$. 

If $a_1b_1c_1=0$ then we can suppose, by symmetry, that $c_1=0$. Then the equation becomes of the form
\begin{equation*}
Q(m)=nF(m)
\end{equation*}
for   polynomials $Q(X), F(X)$ with $\deg Q\leq 2$, $\deg F\leq 1$. Now, either the solutions $(m,n)$ of the above diophantine equation have $m$  bounded, or the two polynomials $Q(X),F(X)$ admit a common integral root $m_0$; in that case, however, the line $\beta^{-m_0}L_2$ would coincide with  $L_1$, contrary to our assumptions.

We can then suppose that the coefficient $a_1b_1c_1$ in equation \eqref{E.cubica} does not vanish, so that the equation defines a cubic curve with three (rational) points at infinity. Now, if this curve is irreducible, then by Runge's theorem it has only finitely many integral points, which can be easily found. In the reducible case,   the only components which can contain infinitely many integral points are lines; now, for such a line, looking at its point at infinity one sees that its integral points $(m,n)$ would have either $m$, or $n$ or $m-n$ bounded, concluding the proof in this sub-case.

\smallskip

 Let us now consider the case in which $G_\beta=\G_a$ and $\beta$ admits infinitely many fixed points.  In suitable coordinates, the matrix for $\beta$ reads  
\begin{equation*}
\left(\begin{matrix}
1& 1&0\\ 0&1 & 0\\ 0&0&1
\end{matrix}\right)
\end{equation*}
The relevant determinant equation becomes 
\begin{equation*}
\det\left(\begin{matrix} a_1 &a_2&a_3\\ b_1& b_1m+b_2 &  b_3 \\ c_1& c_1n+c_2 &  c_3
\end{matrix}
\right)=0
\end{equation*}
which is either unsolvable or is the equation of a line.

\qed
 \medskip

We now show how to produce infinite families of solutions in each of the cases (2), (3) and (4). \medskip

\begin{example}  Here is a first example of an infinite set of orbits intersecting all the lines in the setting of Proposition \ref{P.P_2}.
 The three lines are given by the equations:
\begin{equation*}
L_1:\, y-z=0  \quad L_2:\,  x+y=0 \quad
L_3:\, x+y+z=0
\end{equation*}
Thake for $\beta$ the  automorphism  represented by the matrix
\begin{equation*}
\left(\begin{matrix} 1&1&0\\ 0&1&0\\ 0&0&2
\end{matrix}\right)
\end{equation*}
so that $G_\beta=\G_a\times\G_m$. Note the fixed points $(1:0:0)\in L_1$ and $(0:0:1)\in L_2$. 

The sequence of points in $L_1$
\begin{equation*}
P_m=(-m-1:1:1)
\end{equation*}
has the property that for each 
  $m\in\Z$, $\beta^m(P_m)\in L_2$; also, if  $m=2^n+n$ then $\beta^n(P_m)\in L_3$.
  
  Note that letting $\gamma$ denote the automorphism defined by the matrix
  \begin{equation*}
\left(\begin{matrix} 1&1&0\\ 0&1&0\\ 0&0&1
\end{matrix}\right)
\end{equation*}
we have $\gamma\circ\beta=\beta\circ\gamma$ and $P_m=\gamma^{-m}(P_0)$. 
\end{example}

\begin{example}\label{Ex.u=-v}
Here is another example of an infinite family of solutions for $G_\beta=\G_m$.
Given non-zero complex numbers $a,b$, the three lines are defined by the equations
\begin{equation*}
L_1:\, x-y=0\quad
L_2:\, ax+by=z\quad
L_3:\, bx+ay=z
\end{equation*}
The automorphism is defined by the matrix
\begin{equation*}
\left(\begin{matrix} \lambda&0&0\\ 0&\lambda^{-1}&0\\ 0&0&1
\end{matrix}\right)
\end{equation*}
for any $\lambda\in\C^*$, not a root of unity, so  that $G_\beta=\G_m$. The line $L_1$ contains the fixed point $(0:0:1)$, while the other lines contain no fixed point.

Consider the sequence $P_m\in L_1$ defined by
\begin{equation*}
P_m=(1:1:a\lambda^m+b\lambda^{-m}).
\end{equation*}
(Observe that the  sequence $m\mapsto a\lambda^m+b\lambda^{-m}$ is a binary recurrent sequence.)
Now  $\beta^m(P_m)\in L_2$ and $\beta^{-m}(P_m)\in L_3$. Note that the automorphism $F\in \mathrm{PGL}_3(\C)$ sending $(x:y:z)\mapsto (y:x:z)$ restricts to the identity on $L_1$, interchanges $L_2$ with $L_3$ and satisfies $F\circ \beta \circ F^{-1}=\beta^{-1}$. From these facts it follwos that  if a point $P\in L_1$ satisfies $\beta^m(P)\in L_2$, then $\beta^{-m}(P)=F(\beta^m(P))\in L_3$.

\end{example}

\begin{example} It is easy to construct examples where the set $\mathcal{O}$ is infinite when $G_\beta=\G_a$. In this case, one can take for $\beta$ an affine translation, say defined by $\beta(x,y)=(x+1,y)$. 
Observe that for every three lines $L_1,L_2,L_3$ in general position on the plane, defined over $\Q$, none of which is horizontal, there are infinitely many horizontal lines cutting $L_1,L_2,L_3$ at points having integral distances.  This gives an infinite family of $\beta$-orbits intersecting $L_1,L_2,L_3$. However, not every example   is given by  lines and endomorphisms definable (in suitable coordinates) over the rationals.
\end{example}

We end this section by classifying the cases when infinitely many points of a single orbit lie on a line, and this orbit intersects the other two lines. This completes the classification of the cases when infinitely many points on a line have their orbit intersecting the other two lines. 
We prove the following 

\begin{prop}\label{P.SML}
Suppose the lines $L_1,L_2,L_3\subset\P_2$ have distinct orbits under an automorphism $\beta$ of $\P_2$. Suppose that a point $P\in L_1$ has the property that its orbit intersects    $L_1$ at infinitely many points and it also intersects $L_2$ and $L_3$. Then the orbit of the line  $L_1$ is finite and at most finitely many points of the orbit of $P$ can intersect $L_2 \cup L_3$.
\end{prop}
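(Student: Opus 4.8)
The plan is to reduce this statement to the Skolem-Mahler-Lech theorem together with a short linear-algebra argument; notably, Theorem \ref{T.P_2} is not needed for this particular implication (it governs the complementary situation, in which the relevant points of $L_1$ lie in infinitely many distinct orbits, which is covered by Proposition \ref{P.P_2}). First I would fix a representing matrix $M\in\mathrm{GL}_3(\C)$ for $\beta$ (well defined up to a scalar), a non-zero vector $v$ for $P$, and a row vector $\ell_i$ for each line $L_i$, so that $\beta^k(P)\in L_i$ if and only if $\ell_iM^kv=0$. By Cayley-Hamilton each two-sided sequence $u_i(k):=\ell_iM^kv$ (for $k\in\Z$, using that $M$ is invertible) is a linear recurrence sequence of order at most $3$. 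Moreover the hypothesis forces the $\beta$-orbit of $P$ to be infinite, hence $k\mapsto\beta^k(P)$ is injective, so ``the orbit of $P$ meets $L_1$ at infinitely many points'' is equivalent to $S:=\{k\in\Z:u_1(k)=0\}$ being infinite.

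To obtain the first conclusion I would apply Skolem-Mahler-Lech to $u_1$ (run in whichever of the two directions contains infinitely many zeros): then $S$ contains an infinite arithmetic progression $r+b\Z$ with $b\ge 1$. Setting $w:=M^rv\ne 0$ and $N:=M^b$, we get $\ell_1N^jw=0$ for all $j$, so the $N$-invariant subspace $W:=\mathrm{span}\{N^jw:j\in\Z\}$ is contained in the plane $\ker\ell_1$, with $1\le\dim W\le 2$. If $\dim W=1$ then $w$ is an eigenvector of $N=M^b$, so the point $\beta^r(P)$ --- hence $P$ --- would be fixed by $\beta^b$, contradicting the infiniteness of the orbit. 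Therefore $\dim W=2$, i.e.\ $W=\ker\ell_1$ is $M^b$-invariant, which says $\beta^b(L_1)=L_1$; in particular the $\beta$-orbit of $L_1$ has at most $b$ elements. This dichotomy is the only delicate point, and it is precisely here that all the hypotheses (distinct orbits aside) are really used.

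Finally, for the second conclusion I would observe that $r\in S$ gives $\beta^r(P)\in L_1$, so that, using $\beta^b(L_1)=L_1$ and writing an arbitrary $k\in\Z$ as $k=r+bj+c$ with $0\le c<b$, one has $\beta^k(P)=\beta^c\bigl(\beta^{bj}(\beta^r(P))\bigr)\in\beta^c(L_1)$. Hence the entire orbit of $P$ is contained in the finite union of lines $\bigcup_{c=0}^{b-1}\beta^c(L_1)$, which is precisely the $\beta$-orbit of $L_1$. Since $L_2$ and $L_3$ lie in orbits different from that of $L_1$, neither of them equals any $\beta^c(L_1)$, so each intersection $L_2\cap\beta^c(L_1)$ and $L_3\cap\beta^c(L_1)$ is a single point; therefore the orbit of $P$ meets $L_2\cup L_3$ in at most $2b$ points, which is the assertion.
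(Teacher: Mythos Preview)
Your proof is correct and follows essentially the same route as the paper's: both reduce to Skolem--Mahler--Lech applied to the linear recurrence $k\mapsto \ell_1 M^k v$, extract an arithmetic progression of zeros, and then argue that the line $L_1$ must be $\beta^b$-invariant (you via the cyclic subspace $W$, the paper via the observation that infinitely many distinct iterates of $P$ under $\beta^b$ lie on one line, forcing that line to be stable). Your treatment of the second conclusion---that the whole orbit of $P$ lies in $\bigcup_{c=0}^{b-1}\beta^c(L_1)$, whence it meets $L_2\cup L_3$ in at most $2b$ points---is more explicit than the paper's ``follows immediately'', but the content is the same.
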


\begin{proof}
We shall apply the celebrated Skolem-Mahler-Lech theorem concerning zeroes of linear recurrent sequences. In appropriate homogeneous coordinates, the line $L_1$ is expressed by the equation $z=0$ and the point $P$ has homogeneous coordinates $(1:0:0)$. Letting $T\in\mathrm{GL}_3(\C)$ be a matrix representing the automorphism $\beta$, the condition $\beta^m(P)\in L_1$ reads
\begin{equation*}
u_m:=(0,0,1)\cdot T^m \cdot \left(\begin{matrix}1\\ 0\\ 0\end{matrix}\right)=0.
\end{equation*}
The left-hand side is a sequence satisfying a linear recurrence of the form $u_{m+3}=au_{m+2}+bu_{m+1}+cu_m$, where $a,b,c$ are the coefficients of the characteristic polynomial of $T$, i.e. $\det(T-xI_3)=-x^3+ax^2+bx+c$. 
By hypothesis the above equation has infinitely many solutions $m\in \Z$. By the mentioned Skolem-Mahler-Lech theorem there is an arithmetic progression $n\mapsto hn+k=m$, for suitable integers $k\geq 0$ and $h\geq 1$,  such that for each $m$ in that set, $u_m=0$. This amounts to saying that 
$\beta^k\circ (\beta^h)^n (P)\in (L_1)$ for all $n=0,1,\ldots$, which can be written as
\begin{equation*}
(\beta^h)^n (P)\in \beta^{-k}(L_1)
\end{equation*}
for all $n=0,1,\ldots$. But then, since the orbit of $P$ is infinite, so that $P$ cannot be a fixed point for $\beta^h$, the line $\beta^{-k}(L_1)$ must be invariant under $\beta^h$. This implies  that $L_1$ too is invariant under $\beta^h$ and that the $\beta$-orbit of $L_1$ is finite. The second conclusion of the proposition follows immediately.
\end{proof}

\section{Some remarkable formulae}\label{S.formula} 

In this section we comment about a series of rather surprising formulae  pointed out in the paper \cite{RGK}, and verified subsequently by A. Akopian, M. Bialy, R. Schwartz  and Tabachnikov in private communication to the authors of \cite{RGK} (see that paper for references). Let us start with a brief summary about this, focusing on only one of several similar phenomena.

Suppose that we have a sequence of distinct  points $p_1,\ldots ,p_n\in C(\R)$ defining a periodic billiard trajectory of exact period $n$.  Several    things were known, e.g. on the perimeter of the corresponding polygon: see e.g. \cite{ConnesZagier} or \cite{RGK}. In \cite{RGK} a new property was pointed out: 

\smallskip

{\bf Claim}: {\it Let  $\alpha_i$ be the angle of the trajectory at $p_i$. 
Then, keeping fixed the caustic (tangent to all the segments),   the sum  $\sum_{i=1}^n\cos \alpha_i$ is constant, that is,  it is  independent of $p_1$.} 

\medskip

Our purpose here is twofold: first, we wish to prove this property using the viewpoint of elliptic curves, showing moreover how to obtain in a sense `all' properties of this type. Second, we will see how this viewpoint allows to say something also when the trajectory is not periodic. For instance we shall prove that this sum, again as a function of $p_1$ for a fixed caustic, cannot attain any  value more than twice (taking into account obvious symmetries), which is a best possible conclusion.

The proofs by the authors mentioned above that we have alluded to  allow to analyse also the case of more general billiard curves; however it seems to us that hardly such approach can extend to the case of non-periodical orbits. 

\medskip

Since this topic is not a main one in this paper, we shall proceed somewhat briefly. 

We have remarked above that the function given on the phase space by  $(1-c^2)xv_1+yv_2$ depends only on the caustic. Then it is easy to see that up to a quantity depending only on the caustic, we may replace $\cos\alpha_i$ by the reciprocal of the squared norm of the gradient of $(1-c^2)x^2+y^2$ at $p_i$, i.e. we may replace $\cos\alpha_i$ by $h(p_i)$, where  $h$ is the function of degree $4$ on $C$ given by 
\begin{equation*}
h(p)=h(x,y)={1-c^2\over (1-c^2)^2x^2+y^2}={1\over 1-c^2x^2 }={1\over 2}\left({1\over 1+cx}+{1\over 1-cx}\right).
\end{equation*}
 This function on $C$ has simple poles at the four non-real points $(\pm 1/c, \pm i(1-c^2)/c)\in C(\C)$, denoted $v_i$, $i=1,...,4$,  for this section (we have already considered these points; see formula \eqref{E.4points}). As already noted, these points are significant since the tangent to $C$ at $v_i$, denoted $l_i$ in this section,  is tangent to every caustic.   
 
 As before, we may express $h$ as a function of $z=y/(x-1)$, which provides an isomorphism $z:C\to\P_1$.  The poles become the values $z(v_i)$, i.e. $\pm i(1\pm c)$.  A partial fraction decomposition then yields 
 \begin{equation*}
h={1\over 1-c^2}+\sum  {\epsilon\over c}\cdot {(1+\eta  c)\over z\pm\epsilon  i(1\pm \eta c)},
\end{equation*}
the sum running over all choices of $\epsilon,\eta=\pm 1$.

 Further, we let $h^*$ be the rational  function on $\E_s$ which is the pullback of $h$ via $\pi$, namely, if $\xi:=(p,l)\in \E_s$, so $p\in l$ and $l$ is tangent to $C_s$, and if $\pi(\xi):=p$, we put 
 \begin{equation*}
h^*=h\circ \pi:\E_s\to\P_1,\qquad h^*(\xi)=h^*(p,l)=h(p).
\end{equation*}
The poles of $h^*$ are the points in $\pi^{-1}(v_i)=:\{\nu_i,\nu_i'\}$, say.  Since $l_i$ is tangent to every caustic, we may put $\nu_i=(v_i,l_i)$, whereas $\nu_i'=(v_i,l_i')$, say, with $l_i'$ depending on $s$ and being the other tangent to $C_s$ from $v_i$. 

Let us compute the billiard map $T=\iota^*\circ\iota$ on $\E_s$, when applied to $\nu_i$. Since $l_i$ is tangent to every caustic, it is tangent to $C$, hence $l_i$ meets $C$ only in $v_i$ with multiplicity $2$, hence $\iota$ fixes $\nu_i$: $\iota(\nu_i)=\nu_i$. Therefore $T(\nu_i)=\iota^*(\nu_i)=\nu_i'$, by the very definition of $\iota^*$. Hence the billiard map sends $\nu_i$ to $\nu_i'$  and $\pi^{-1}(v_i)=\{\nu_i, T(\nu_i)\}$. 

\medskip

For an integer $n>0$, let us now put
 \begin{equation}\label{E.H}
H(\xi)=h^*(\xi)+h^*(T(\xi))+\ldots +h^*(T^{n-1}(\xi)),\qquad H:\E_s\to\P_1.
\end{equation}

\begin{proof}[Proof of Claim.] Verifying the above Claim  amounts to prove that $H$ is constant if $T^n=$identity on $\E_s$.

For this, let now $\chi=\chi_s$ be a nonzero differential of the first kind on $\E_s$ and let us consider  the differential $H\cdot \chi$, which is the sum of the differentials $W_r:=(h^*\circ T^r) \chi$, $0\le r\le n-1$.  Since $\chi$ is invariant by $T$ (which is a translation on an elliptic curve corresponding to $\E_s$), we have  $W_r=W_0\circ T^r$. 

Since $\chi$ has no zeros and no poles, the divisor of $W_0$ equals the divisor of $h^*$, hence has simple poles at the points in $\bigcup_{i=1}^4\{\nu_i, T(\nu_i)\}$. Moreover, by the above partial fraction decomposition, the residues of $W_0$ are resp. constant multiples of the residues of the differentials obtained through the  functions of degree $1$ on $C$ which appear in the decomposition,  pulled back to $\E_s$ and multiplied by $\chi$. The four  such functions have  simple poles resp. at $\{\nu_i, T(\nu_i)\}$, and since the sum of the residues is zero, the same happens for $h^*\cdot \chi$.  

Let then $r_i$, resp. $-r_i$ be the residue of $h^*\cdot \chi$ at $\nu_i$ resp. $T(\nu_i)$.  Then $W_r=W_0\circ T^r$ will have simple poles at $T^{-r}(\nu_i)$ and $T^{-r+1}(\nu_i)$ with residues resp. $r_i,-r_i$, for $i=1,2,3,4$.  

Hence we see that the poles of $H\cdot \chi$  occur at at  the $T^s(\nu_i)$,  $s=1,0,\ldots ,-n+1$, with residues which cancel except possibly for $s=1,-n+1$, where anyway the residues are opposite. But if $T^n$ is the identity, then these poles are the same and the residues cancel as well. Hence $H\cdot \chi$ has no poles and is therefore a regular differential on $\E_s$, which must be a constant multiple of $\chi$, proving that $H$ is constant, as wanted.
\end{proof}

\subsubsection{Other formulae?}  

One may ask for other rational functions $g$ on $C$  behaving like $h$, i.e. having the following 

\medskip

{\bf Property}: {\it For a  fixed caustic $C_s$  for which $T$ has finite order $n$ on $\E_s$,  the sum $g(p_1)+\ldots +g(p_{n-1})$ is constant as a function of $p_1$. }

\medskip

The same argument as above shows that in place of $h$ we may tale any linear combination of the functions occurring in its partial fraction decomposition, i.e. constants and the functions $(z\pm\epsilon  i(1\pm \eta c)^{-1}$ on $C$ ($\epsilon,\eta\in\{\pm1\}$).

Actually,  restricting to caustics of {\it even} period $n$  for $T$, there are many other such functions. Indeed, let  $g$ be a (rational)  function   on $C$  which is odd  with respect to the automorphism $p\to -p$ on $C$, i.e. $g(-p)=-g(p)$. Now, note that if $T$ has even period $2m$ on  an  {\it elliptic}  caustic $C_s$, then the points $p_1,\ldots ,p_n$  in a periodic orbit can be grouped into $m$ pairs $p_i,p_{i+m}$, which satisfy $p_{i+m}=-p_i$: indeed, we have that $T^m$ is an automorphism of period $2$, which can be checked to correspond to $p\to -p$ on $C$. (This may be verified e.g. on using the elliptic  picture, in which passing from $i$ to $i+m$ with respect to the caustic $C_s$ corresponds to adding $mB(\lambda)$ in $L_\lambda$, and the above analysis shows that this is a point of order $2$ coresponding to the said automorphism. We note that the case of hyperbolic caustics is slightly more complicated and the   period considered modulo $4$ plays a role.)  Thus $\sum g(p_i)=0$.

\medskip

In general, these functions are essentially the only exceptions.  Indeed assuming for instance that $g$ has only simple poles, and restricting to odd period, we may prove the following assertion:

\begin{prop}[Converse Claim] {\it  Suppose that a function  $g\in \C(C)$, with only simple poles   satisfies  the Property merely for infinitely many caustics, but where we require that the period is odd. Then $g$   is a linear combination of 
the functions $1$ and $z\pm\epsilon  i(1\pm \eta c)^{-1}$ on $C$.}
\end{prop}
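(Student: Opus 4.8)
The plan is to mimic the residue argument used in the proof of the original Claim, but now running it backwards: the hypothesis that $\sum_{i} g(p_i)$ is constant on infinitely many odd-period caustics will be shown to force strong constraints on the divisor of $g$, precisely pinning down its poles and residues. First I would set up notation exactly as in the proof of the Claim: pull $g$ back to $g^* = g\circ\pi$ on $\E_s$, pick a nonzero invariant differential $\chi=\chi_s$, and form $G = \sum_{r=0}^{n-1} g^*\circ T^r$ and the differential $G\cdot\chi$. Since $g^*\cdot\chi$ has simple poles, located at $\pi^{-1}(\text{poles of }g)$ with residues I will call $\rho$, summing the translates $W_r = (g^*\circ T^r)\chi$ telescopes the residues: the Property (that $G$ is constant, i.e. $G\cdot\chi$ is holomorphic hence a constant multiple of $\chi$) holds precisely when all residues of $G\cdot\chi$ vanish. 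When $T^n = \mathrm{id}$, this telescoping automatically kills the residues at interior points, so the constancy of $G$ gives no information — UNLESS the poles of $g$ are themselves arranged so that the residues already cancel. The key point: if $P$ is a pole of $g^*$ lying in a $T$-orbit which, on a periodic caustic $C_s$, visits some of the other poles of $g^*$, then the residues along that orbit must sum to zero for $G\cdot\chi$ to be holomorphic; and if $P$ lies in a $T$-orbit disjoint from the other poles, then $G\cdot\chi$ genuinely has a pole at $P$ (with residue $\rho_P \neq 0$) unless $\rho_P=0$.

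The crux is then a counting/genericity argument over the infinitely many admissible caustics. Since $g$ has finitely many poles on $C$, lift them to finitely many points $Q_1,\dots,Q_k$ on $\E_s$ (the number is bounded uniformly in $s$); for infinitely many $s$ the periodicity order $n=n(s)$ is odd and tends to infinity (this uses Proposition \ref{P.nontors}: the billiard section is non-torsion, so it takes torsion values of arbitrarily large, and in particular arbitrarily large odd, order — the second argument for Proposition \ref{P.nontors} actually produces coprime orders $m,n\geq 3$). For such large $n$, two distinct lifted poles $Q_i, Q_j$ lie in the same $\langle T\rangle$-orbit on $\E_s$ only if $Q_i - Q_j$ is a multiple of the billiard section $B(\lambda)$ of order $< n$; as $s$ varies this is a torsion condition on a fixed finite set of sections, so by Theorem \ref{CMZ} (applied to the section $Q_i-Q_j$ of the relevant elliptic scheme) it can hold for infinitely many $s$ only if $Q_i-Q_j$ is identically a multiple of $B(\lambda)$, i.e. only for "structural" coincidences. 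Outside this structural locus, for infinitely many admissible $s$ every lifted pole sits in its own $\langle T\rangle$-orbit, so the residue at each such pole survives in $G\cdot\chi$ — forcing $\rho_{Q_i}=0$ for those $i$. A pole with zero residue contradicts the assumption that $g$ has only simple poles (a simple pole has nonzero residue), so those poles cannot occur at all. This shows the only possible poles of $g$ are those whose $\E_s$-lifts are "structurally linked" by the billiard translation, and a direct computation — using that $T(\nu_i)=\nu_i'$ and $\pi^{-1}(v_i)=\{\nu_i,T(\nu_i)\}$ as established in the proof of the Claim — identifies these with the four points $v_i$, i.e. $z = \pm\epsilon\, i(1\pm\eta c)$.

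Finally, knowing that $g$ can only have (simple) poles among the four points $v_i$, write $g$ as a partial fraction expansion $g = \mathrm{const} + \sum_{\epsilon,\eta} \frac{c_{\epsilon\eta}}{z \pm \epsilon i(1\pm\eta c)}$ in the coordinate $z$; this is then automatically a linear combination of $1$ and the functions $(z\pm\epsilon i(1\pm\eta c))^{-1}$ on $C$, which is the desired conclusion. The main obstacle I anticipate is the middle step: making rigorous that "for infinitely many admissible $s$, all lifted poles lie in distinct $T$-orbits" — this requires controlling, uniformly in $s$, when the finitely many difference-sections $Q_i - Q_j$ hit torsion of order less than $n(s)$, and ruling out that they are identically torsion multiples of $B(\lambda)$ except in the exact configurations corresponding to the $v_i$. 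Handling this cleanly may require invoking Theorem \ref{CMZ} (or Theorem \ref{B-C}) carefully and doing a genuine, if short, computation of the relevant differences on the Legendre model using equations \eqref{E.zeta} and \eqref{E.masser}; I would also need to check the parity hypothesis (odd $n$) is genuinely used to exclude the "odd functions $g(-p)=-g(p)$" family, which otherwise would be counterexamples — this is where the restriction to odd periods enters the argument, paralleling the remark preceding the Proposition about even periods.
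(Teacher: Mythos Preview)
Your overall architecture matches the paper's proof: pull back $g$ to $g^*$ on $\E_s$, argue that constancy of $G=\sum g^*\circ T^r$ forces the poles of $g^*\chi$ to fall into $T$-orbits with vanishing residue sums, pass to an infinite subset of caustics where a fixed grouping occurs, and then invoke a CMZ-type result to deduce that for any two poles $\nu_i,\nu_j$ in the same group the difference $\psi(\nu_i)-\psi(\nu_j)$ is \emph{identically} linearly dependent with the billiard section $B$. The partial-fraction finish is also the same.

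The genuine gap is your identification step. You assert that ``a direct computation \ldots\ identifies these with the four points $v_i$'', but you do not say what the computation is, and the paper's argument here is neither short nor a computation in the Legendre coordinates you point to. What the paper actually does is analyze the \emph{ramification over $\C(s)$ of the minimal field of definition} of each section $\psi(\nu)$. For $\pi(\nu)=(v_x,v_y)\in C$, this field ramifies at $\lambda=v_x^2$, whereas the billiard section $B$ is defined over a field unramified except above $s\in\{0,c^2,1,\infty\}$. A linear dependence $a(\psi(\nu_i)-\psi(\nu_j))=bB$ then forces the ramification of the left side to lie over this fixed set, which drives a case split on whether $v_x^2\in\{0,1,1/c^2\}$. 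The cases $v_x^2\in\{0,1\}$ produce sections whose doubles are $4$-torsion, so the relevant differences have exact even order; the case where two poles share $v_x$ but come from the symmetry group $\Gamma$ produces a difference which is exact $2$-torsion. Both are excluded precisely because $B(\lambda)$ has \emph{odd} order at the caustics in question --- this is where the parity hypothesis actually bites, not merely as a device to exclude odd functions $g(-p)=-g(p)$. Only $v_x^2=1/c^2$ survives, giving the four points $v_i$ and the relation $2\psi(\nu)=\pm B$.

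So your plan is right in outline but underestimates the middle step. The sentence ``a direct computation using \eqref{E.zeta} and \eqref{E.masser}'' should be replaced by the ramification comparison just described, together with the order-parity case analysis; without it, you have not ruled out configurations like two poles $v,w$ with distinct abscissas and $\psi(\nu)-\psi(\mu)\in\Q B$ as sections.
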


We remark that the restriction to odd periods can be eliminated, allowing the other  functions described above,  at the cost of a more complicated argument. Since this relies exactly on the same ideas, and especially since this matter is not the main one on this paper, we confine to the given statement, which illustrates all the principles (and we shall be brief in the proofs).

On the other hand, we remark that the finiteness assertions  at the basis of many  results of this paper (especially Theorem \ref{CMZ}) play a role also in this proof, and this is one more reason for including such result here.  After the proof we shall observe that replacing `{\it infinitely many}' with `{\it at least $K$}' would lead to a false assertion no matter how  large $K$ is. 

\begin{proof}[Sketch of proof]
 Let us consider a given caustic $C_s$ so that the assumptions holds for it, so $T$ is periodic on $\E_s$ of odd (exact) period $n=n_s$.
 
 Let $V$ be the set of poles of $g$ on $C$. Each $v\in V$ lifts to $\le 2$ points $(v,l)\in \E_s$, related by $\iota^*$, whose set we denote by $\tilde V$. These poles are the ones of $g^*:=g\circ\pi$ and they are simple except when $v$ is one of the four ramification points of $\pi$, namely the $P_i$. But this holds only for finitely many caustics at most, so let us assume this is not the case, so each $v\in V$ lifts to exactly two points of $\tilde V$, related by $\iota^*$. 

The  assumption yields   that  the function $G:=g^*+g^*\circ T+\ldots +g^*\circ T^{n-1}$ is constant on $\E_s$. 
 Then, it is not too difficult to perform a similar   analysis as in the previous  proof of the Claim, to  show that a certain cancellation would necessarily occur among the poles of the $(g^*\chi)\circ T^m$, $m=0,\ldots ,n-1$, which entails that the  poles  of $g^*\chi$ may be grouped in sets in the same orbit under $T$, such that moreover the sum of the residues in each such set vanishes. Going to an infinite subset of caustics we may assume that the same grouping occurs and that the corresponding poles of $g$ in each group are the same for all caustics.
 
 Let  $\tilde W$ be one such group, with distinct poles $\nu_1,\ldots ,\nu_r$ of $g^*$, so that $\nu_i=T^{m_i}\nu_1$, where $m_i$ are distinct  integers modulo $n$ depending on $s$ ($m_1=0$), whereas the set $W:=\pi\tilde W$ does not depend on $s$, and the sum of the residues of $g^*\chi$ at these poles is zero. 
 
 We may now interpret this on the Legendre model, through the isomorphism $\psi:\E_s\to \L_s$. In the above notation, we have (recalling  $s=c^2\lambda$),
 \begin{equation*}\label{E.dep_i}
  \psi(\nu_i)=\psi(\nu_1)+m_iB(\lambda),\qquad i=1,\ldots ,r.
 \end{equation*}
 Since the sum of the residues vanishes, we have $r\ge 2$ for each group of points. Since this holds for infinitely many $s$ such that $B(\lambda)$ has finite (odd) order on $\L_s=L_\lambda$, by Theorem \ref{CMZ} we infer that the sections $\psi(\nu_i)-\psi(\nu_j)$ and $B(\lambda)$ are (generically)  linearly dependent for each $i,j\in\{1,\ldots ,r\}$. These sections  correspond to a base which is a cover of the $s$-line unramified except above a certain finite set whereas the minimal  field of definition of $B$ is unramified except above $s=0,c^2,1, \infty$. 
 
 Let us inspect the ramification over $\C(s)$ of  a minimal  field of definition of a section  $\psi(\nu)$, where $\pi(\nu)=v =(v_x,v_y)\in C$. From equation \eqref{E.w}  we see that the ramification occurs when $z(P_0)=z(\gamma \nu)$, where $\gamma$ varies in the group $\Gamma$ of four symmetries of $\E_s$, which lift the symmetries of $C$ obtained from sign changes. A simple calculation yields that the ramification occurs when $\lambda=v_x^2$ or equivalently $1-\lambda=v_y^2/(1-c^2)$.  On the other hand, since the said sections are linearly dependent, we must have that $\psi(\nu_i)-\psi(\nu_j)$ is defined over a field unramified except above $s=0,c^2,1, \infty$. To exploit this information we distinguish among some cases.
 
 Suppose first that in a given group there are two indices $i,j$  such that the corresponding $v_x^2$ are distinct and distinct from $0,1,1/c^2$. Then the corresponding section $\psi(\nu_i)-\psi(\nu_j)$ is defined over a field necessarily ramified above some point outside the said ones, so it cannot be linearly dependent with $B$.  The same holds if one of the $v_x^2$ is distinct from $0,1,1/c^2$ and all the others fall in this set. Therefore we may assume that for all groups and for all pairs of points we have either $v_x^2\in\{0,1,1/c^2\}$ or we have  that the $v_x^2$ are equal. 
 
 Let us then consider the case when in a group $W$ as above  there are points $\nu_1\neq\nu_2$ such that $v_{1x}=v_{2x}\not\in \{0,1,1/c^2\}$. Then $v_1=\gamma(v_2)$ for some $\gamma\in\Gamma$. If $v_1=v_2$ then necessarily $\nu_2=\iota^*\nu_1$, so $\psi(\nu_2)=-\psi(\nu_1)$. Then $\psi(\nu_2)-\psi(\nu_1)=-2\psi(\nu_1)$ is not (generically)  linearly dependent with $B(\lambda)$ since the respective minimal  fields of definition have distinct ramification.  If $v_1\neq v_2$ then $\psi(\nu_2)=\pm\psi(\nu_1)+\tau$ where $\tau$ is a section of order exactly $2$.  If the minus sign holds, then we conclude as before using ramification. If the plus sign holds, then $\psi(\nu_2)-\psi(\nu_1)=\tau$ has exact order $2$, thus there cannot be any $s_0$ (of good reduction) for which it equals a multiple of $B(\lambda_0)$, which has odd order.
 
 So we are reduced to the case when every relevant $v_x$ in a group $W$ is in $\{0,1,1/c^2\}$.  It is easily seen (e.g. with a simple explicit calculation)  that the first two possibilities lead to distinct sections for which  $\psi(\nu)$ has exact order $4$. Therefore the difference 
 $\psi(\nu_2)-\psi(\nu_1)$ of any two distinct ones of them is of (exact) even order, which may be excluded as before. 
 
 The third case instead leads to the poles with $v_x=\pm 1/c$.  The corresponding sections $\nu$ satisfy $2\psi(\nu)=\pm B$.  Indeed, if $v=\pi(\nu)$, we may assume that $\nu=(v,l)$ where $l$ is tangent to {\it all} caustics (this is the distinguished property of the points in question). Hence $\iota$ fixes $\nu$ and therefore $\iota^*(\nu)=T(\nu)$ (we have seen this already in the proof of the Claim). At the level of Legendre curves we have $\psi(\iota^*\nu)=\psi(\nu)+B$. However $[\iota^*]=-1$, which yields the assertion. Now, if a group $W$ contains poles of both types then again we find that the order of $B$ would be even.
 
 But then the function  $g$ has only poles in the said set of four points, and an easy analysis with residues completes the proof. 
 \end{proof}

  \medskip


As a further remark, we note that if we let the function $g$ depend also on $s$, then the space of relevant functions greatly increases, containing e.g. all those of the shape $2f(v)-f(T_C(v))-f(T_C^{-1}(v))$. It may be still of interest to describe more completely this space.
  In any case this shows  that we may construct counterexamples to the conclusion of the Proposition, satisfying however the assumption for any finite set of caustics: indeed, by easy interpolation, given distinct $s_1,...,s_m$, one may construct a function $g(s,p)$ on $\P_1\times C$,  such that $g(s_i,p)$ coincides  with a prescribed function $g_i$ on $C$ (depending on $i$), and we may choose the $g_i$ as above.

\medskip

To go ahead, we remark that, in the opening case of the function $h$ above, we could compute the relevant constant, e.g. integrating with respect to the invariant measure mentioned above, and then obtain another proof of the  complete results of the above cited authors. But instead we show how this approach leads to further conclusions in the case when $T$ is not of finite order.

 \subsubsection{Non periodic orbits}  We now conclude this section by studying the same function (as in the Claim above) $\sum_{i=1}^{n}\cos\alpha_i$, but when the trajectory starting with $p_1,p_2,\ldots$ is not periodic. This issue does not appear in the quoted papers, and certainly the sum cannot be constant on $C$ (in the above sense)  this time. Nevertheless, we shall see that something relevant still can be said, on adopting the `elliptic scheme  viewpoint'. 
 
 By the same remarks as above, for any given caustic $\E_s$, the problem is reduced to the study of the rational function $H$ on $\E_s$ given by \eqref{E.H}.  Indeed, we have remarked that, for a fixed caustic, we have 
 $\sum_{i=1}^n\cos\alpha_i=k(s) H(\xi)$, for a number $k(s)\neq 0$ depending only on the caustic $C_s$ and for $\xi$ a point in $\E_s$ with $\pi(\xi)=p_1$. 
 
 As in the proof of the Claim,  the poles of $H\cdot \chi$  occur at at  the $T^s(\nu_i)$,  $s=1,0,\ldots ,-n+1$, with residues which cancel except possibly for $s=1,-n+1$, where anyway the residues are opposite.  Now we are assuming that $T$ has not finite order (on $\E_s$) so we cannot draw the conclusion that even these last poles cancel. Actually, the converse assertion is true as well. (The same holds if $T$ has finite order not dividing $n$.) 
 
 However  all this says that $H\cdot \chi$ has only eight simple poles, and the same holds for the function $H$ on $\E_s$. In particular, $H$ has degree $8$, is not constant,  and cannot attain any value more than $8$ times.

 Now,  if  $\sum_{i=1}^n\cos\alpha_i$ (as a function of $p_1$ for a given caustic) attains a certain value at $p_1$, then, by the invariance of the map $h$ under the symmetries of the ellipse,  it is easily seen geometrically  that this value is attained by $H$ at one point of $\E_s$ above $p_1$ and above  another point in the same semi-ellipse, namely at the symmetrical of $p_1$ with respect to the $x$-axis. The function $H$ also attains the same value at two further points corresponding to two points of $C$ in the other semi-ellipse. (This is clear also algebraically, say on looking at $\L_s$, since the two points above a given point of the ellipse correspond to changing sign in the elliptic curve, and the four symmetries correspond to addition of points of order $2$.) Then   we also obtain the following

 \begin{prop}\label{P.8}
 If $T^n$ is not the identity on $\E_s$, the sum  $\sum_{i=1}^n\cos\alpha_i$, as a function of $p_1$ (and keeping fixed the caustic),  cannot attain any value more than twice in any semi-ellipse.
 \end{prop}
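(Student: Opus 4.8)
The plan is to reduce everything to the analysis of the rational function $H$ on the fixed genus-one curve $\E_s$ that was already carried out in the discussion of the non-periodic case. From that discussion we have the key structural fact: when $T^n \neq \mathrm{id}$ on $\E_s$, the differential $H\cdot\chi$ (with $\chi$ a nonzero holomorphic differential) has exactly eight simple poles, located at $T(\nu_i)$ and $T^{-n+1}(\nu_i)$ for $i=1,2,3,4$, with nonzero residues (the internal cancellation telescopes, but the two boundary terms $s=1$ and $s=-n+1$ do not coincide and their residues do not vanish). Since $\chi$ has no zeros and no poles, $H$ itself has exactly these eight simple poles, hence is a non-constant function of degree $8$ on $\E_s$; in particular each value in $\P_1$ is attained at most $8$ times, counting multiplicity. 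First I would restate this precisely and record that $\deg H = 8$.

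The next step is to exploit the symmetries. Recall the group $(\Z/2)^2$ of fixed-point-free automorphisms of $\E_s$, which on $C$ realizes the sign changes $(x,y)\mapsto(\pm x,\pm y)$, together with the hyperelliptic-type involution $\iota^*$ whose orbits are the fibers of $\pi:\E_s\to C$. The function $h$ on $C$ is invariant under the full group of sign changes (this is visible from the explicit formula $h(x,y) = 1/(1-c^2x^2)$, which depends only on $x^2$), and $H = \sum_{r=0}^{n-1} h^*\circ T^r$ inherits invariance under the automorphisms of $\E_s$ lifting those symmetries, because $T$ is a translation and hence commutes with translations by $2$-torsion — here one must check, as is done implicitly above, that the lifts of the sign-change symmetries are exactly the $2$-torsion translations up to sign, and that $H$ is genuinely invariant (not merely covariant) under them. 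Granting this, if $H$ takes a value $\gamma$ at a point $\xi$ with $\pi(\xi) = p_1$, then $H$ also takes the value $\gamma$ at $-\xi$ (same fiber, i.e. same $p_1$), at $\xi + \tau$ and $-\xi+\tau$ for the $2$-torsion section $\tau$ realizing reflection in the $x$-axis, and at the remaining two translates by the other $2$-torsion points — altogether a set of (generically) eight points, partitioned into two groups of four: the four above $p_1$ and its $x$-axis reflection (lying in one semi-ellipse), and the four above the two points in the other semi-ellipse.

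Now the counting closes. Suppose, for contradiction, that $\sum_{i=1}^n \cos\alpha_i$, viewed as a function of $p_1$ with the caustic fixed, attained some value at three distinct points $p_1, p_1', p_1''$ lying in a single semi-ellipse. Passing to $\E_s$ and using $\sum\cos\alpha_i = k(s)H(\xi)$ with $k(s)\neq 0$, we get that $H$ attains the single value $\gamma := k(s)^{-1}\cdot(\text{that value})$ at the two points of $\pi^{-1}(p_1)$, the two points of $\pi^{-1}(p_1')$, and the two of $\pi^{-1}(p_1'')$ — but also, by the $x$-axis reflection symmetry just described, at the two points above the reflection $\bar p_1$ (or, if $p_1$ is one of the two fixed points of that reflection, one handles the multiplicity separately). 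Even being conservative and only using the genuinely forced points, three distinct fibers in a semi-ellipse contribute $6$ preimages, and adding the reflections of two of them (which land in the same semi-ellipse and are distinct from the original six for a generic configuration) pushes the count to at least $9 > 8$, contradicting $\deg H = 8$. The main obstacle — and the place requiring the most care — is precisely this bookkeeping of multiplicities and of the degenerate positions (when $p_1$ lies on the $x$-axis, or when two of the putative three points are related by a symmetry, or when $\xi$ is a ramification point of $\pi$), where one must verify that the symmetry still forces enough distinct preimages, or enough multiplicity at coincident ones, to break the bound of $8$; away from these thin loci the argument is immediate. One should also note at the end that the two points of $\pi^{-1}(p_1)$ are indeed distinct unless $p_1$ is one of the four branch points $P_i$ of $\pi$, which are not real and hence irrelevant for the billiard statement, so the reduction to "at most twice per semi-ellipse" is clean.
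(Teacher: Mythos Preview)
Your overall strategy---establish $\deg H = 8$ from the pole analysis, then use symmetries of $H$ to multiply each occurrence into several preimages and force a contradiction---is exactly the paper's. The genuine gap is in the symmetry step. You assert that if $H(\xi)=\gamma$ then also $H(-\xi)=\gamma$, i.e.\ that $H$ is invariant under $\iota^*$ (the involution swapping the two tangents from $p_1$, which on the Legendre model is $x\mapsto -x$). This is false. While $h^*=h\circ\pi$ is certainly $\iota^*$-invariant (since $\pi\circ\iota^*=\pi$), the sum $H=\sum_{r=0}^{n-1} h^*\circ T^r$ is not, because $T$ does not commute with $\iota^*$: one has $T\circ\iota^*=\iota^*\circ T^{-1}$, so
\[
H(\iota^*\xi)=\sum_{r=0}^{n-1} h^*(T^{-r}\xi),
\]
the sum over the \emph{backward} trajectory, which is generically different from $H(\xi)$. (The paper exploits exactly this distinction later: it is only the \emph{centred} sum $H_1$ that is $\iota^*$-invariant, and this is the whole point of introducing $H_1$ and the larger group $D_1$.) Consequently the two points of $\pi^{-1}(p_1)$ do not both lie in $H^{-1}(\gamma)$, and your ``six preimages from three fibers'' count has no basis.

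The paper's argument uses only the fourfold group $D\cong(\Z/2)^2$ of $2$-torsion translations (lifting the sign-changes on $C$); $H$ is genuinely $D$-invariant because translations commute with $T$. One occurrence at $p_1$ then produces a $D$-orbit of four points in $H^{-1}(\gamma)$---one each above $p_1$, its $x$-reflection in the same semi-ellipse, and two points in the other semi-ellipse---and $\deg H=8$ bounds the number of such orbits by two. Note, incidentally, that if your eightfold symmetry claim were correct, $H^{-1}(\gamma)$ would be a single orbit and the value would be attained only once per semi-ellipse, contradicting the ``at least twice'' lower bound recorded immediately after the proposition.
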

 
 Note that this function is  `half $C$-periodic',  in the sense that it attains the same values at $p$ and $-p$ (see also below). So, the result is in a sense best-possible: viewed on a semi-ellipse considered as an interval, the function is real and periodic, and non-constant, so the Proposition implies that   it will assume each value at least twice.
 
 \medskip

Let us now see how we can extract even more information. For this it shall be convenient to distinguish between even and odd $n$, and let us say that $n=2m+1$ is odd. We put
\begin{equation*}
H_1(\xi)=H(T^{-m}(\xi))=h^*(T^{-m}(\xi))+\ldots +h^*(T^{-1}(\xi))+h^*(\xi)+\ldots +h^*(T^m(\xi)).
\end{equation*} 
 For a given caustic, this function is a constant times the above sum of the cosines; however considered as a function of $p_{m+1}$, which is the central point in the sequence of $2m+1$ points in the billiard trajectory, we shall see that this function, more symmetrical than the former,  has a very special shape. 
 
  \medskip
  
  Let $D$ be the group of four automorphisms of $\E_s$,  introduced above,  induced by the four natural symmetries of $C$ (they correspond to the translation by points of order $2$ on $J_s$). Note that each element of $D$ commutes with $T$. For $\sigma\in D$, we thus have  $h^*(T^r(\sigma(\xi))=h^*(\sigma(T^r(\xi)))=h^*(T^r(\xi))$, where the last equality follows since $h$ is a function on $C$  invariant by the mentioned four symmetries. Hence $H$ is invariant by $D$. (This gives another explanation of the fact mentioned before.) 

Let also $D_1$ be the group generated by $D$ and $\iota^*$. Note that this group is commutative (as can be very easily checked by direct geometric reasoning), hence isomorphic to $(\Z/(2))^3$. We also note that on the Legendre model $\iota^*$ corresponds to $x\mapsto -x$ so the group is represented by $x\mapsto \pm x+t$ for $2t=0$. 

Now, we have $T\circ \iota^*=\iota^*\circ \iota\circ \iota^*=\iota^*\circ T^{-1}$. Moreover, since $\pi\circ \iota^*=\pi$,  the function $h^*$ is invariant by $\iota^*$.  Therefore $h^*(T^r\iota^*\xi)=h^*(\iota^*T^{-r}\xi)=h^*(T^{-r}\xi)$. This entails that  $H_1$ is also  invariant by $D_1$.

\medskip

Recall from the proof of the Claim that the poles of $h^*$ are simple and consist of the points denoted therein $\nu_i$  ($i=1,...,4$) and $\iota^*\nu_i$. These points $\nu_i$ are distinct and have the property of being  fixed points of $\iota$, so that $\iota^*\nu_i=T\nu_i$. The poles of $H$ turn out to be simple and occurring  at the eigth points $T\nu_i$ and $T^{1-n}\nu_i$, and in turn the poles of $H_1$ occur  at the $T^{1+m}\nu_i$ and $T^{-m}\nu_i$, $i=1,2,3,4$.  (We could show the poles are distinct under the present assumption, but that is in fact not needed.) 

\medskip

Now, $D_1$ acts as a group on the function field of $\E_s$, e.g. over $\C$, the fixed field having therefore index $8$ in $\C(\E_s)$. Since $h^*$ has degree $8$, the fixed field is precisely $\C(h^*)$.  Hence $H_1$ lies in $\C(h^*)$. On the other hand, $H_1$ has degree $\le 8$, hence $H_1$ is a linear fractional transformation of $h^*$:
\begin{equation}\label{E.H_1}
H_1(\xi)=H(T^{-m}(\xi))=h^*(T^{-m}(\xi))+\ldots  +h^*(T^m(\xi))={a_nh^*+b_n\over c_nh^*+d_n},
\end{equation}
where $a_n,b_n,c_n,d_n$ are complex numbers not all zero depending on $n$ and $s$, actually they are real because all functions are defined over $\R$. Note this could give rise to a constant function (if $a_nd_n=b_nc_n$), and this may indeed happen if $T$ has order dividing $n$, as we have seen. The corresponding discussion also shows that these are the only cases. 

\medskip

Note that we may view $H_1$ also as a function $\tilde H_1$ on $C$, because it is invariant under $\iota^*$, hence, setting $p:=\pi(\xi)$, we may write $H_1(\xi)=\tilde H_1(p)$. Recalling the above formula for $h$ and putting $p=(x,y)\in C$, we have
\begin{equation}\label{E.tildeH_1}
\tilde H_1(p)= {a'_nx^2+b'_n\over c'_nx^2+d'_n},
\end{equation}
where $a'_n,b'_n,c'_n,d'_n\in\R'$ again depend (only) on  $n$ and $s$ and are not all zero. This also says that the function $\tilde H_1$ is `a quarter of $C$-periodic', and takes its extremal values for $x=0,\pm 1$, i.e. at the vertices of the ellipse, i.e. the points where $xy=0$. Hence we may improve the previous Proposition  with the following

\begin{thm} If $T$ has not finite order dividing $n$, the function $\tilde H_1$   assumes its maximum and minimum precisely at the  points of $C$ where $xy=0$, and assumes equal values at opposite points. It assumes each value in its range exactly once in each quarter of $C$ (i.e. between any consecutive  two of the said points).
\end{thm}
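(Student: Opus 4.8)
The plan is to read off everything from the normal form \eqref{E.tildeH_1} already obtained, namely that $\tilde H_1$ is a fractional linear function of the single variable $t:=x^2$ with real coefficients $a'_n,b'_n,c'_n,d'_n$. First I record that under the present hypothesis --- $T$ not of finite order dividing $n$ --- the function $H_1$, hence $\tilde H_1$, is non-constant (this is exactly the exclusion of the case $a_nd_n=b_nc_n$ noted right after \eqref{E.H_1}); consequently the fractional linear map $t\mapsto (a'_nt+b'_n)/(c'_nt+d'_n)$ is non-degenerate, i.e. $a'_nd'_n-b'_nc'_n\neq 0$. The theorem then reduces to two elementary facts: (a) this fractional linear map has no pole for $t\in[0,1]$, the range of $x^2$ on the real ellipse \eqref{E.C}; and (b) on each of the four closed arcs into which the four points $(\pm1,0)$ and $(0,\pm\sqrt{1-c^2})$ (those with $xy=0$) cut $C(\R)$, the map $p\mapsto t=x^2$ is a homeomorphism onto $[0,1]$.

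Fact (b) is immediate: along such an arc the abscissa $x$ runs monotonically between $0$ and $\pm1$, so $t=x^2$ is continuous, strictly monotone, and surjective onto $[0,1]$, taking the value $0$ at the vertex with $x=0$ and the value $1$ at the vertex with $x=\pm1$. For fact (a), note that the poles of $\tilde H_1$ on $C$ are the images under $\pi$ of the poles of $H_1$ on $\E_s$; by the residue bookkeeping carried out in the proof of the Claim (and the subsequent identification of the poles of $H$, hence of $H_1$), these are the eight points $T^{1+m}\nu_i$ and $T^{-m}\nu_i$, which are non-real since the $\nu_i$ lie over the non-real points $(\pm1/c,\pm i(1-c^2)/c)$ and $T,\pi$ are defined over $\R$. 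For a caustic reached by billiard trajectories through every point of $C(\R)$ (e.g. any elliptic caustic), $\pi$ restricts to a surjection $\E_s(\R)\to C(\R)$ whose fibre over each real point is real, so no complex point of $\E_s$ can map to a real point of $C$; hence $\tilde H_1$ has no pole on $C(\R)$, which is fact (a). Equivalently, up to a caustic-dependent affine change $\tilde H_1$ is the quantity $\sum_{i=1}^n\cos\alpha_i$, which is real and bounded by $n$ along the real billiard, hence finite.

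Granting (a) and (b), the derivative $(a'_nd'_n-b'_nc'_n)/(c'_nt+d'_n)^2$ has constant nonzero sign on $[0,1]$, so $\tilde H_1$ is a strictly monotone homeomorphism of $[0,1]$ onto a fixed interval $I$; composing with the homeomorphism of fact (b) shows that the restriction of $\tilde H_1$ to each quarter of $C$ is a strictly monotone homeomorphism onto $I$. Since $t=x^2$ already sweeps all of $[0,1]$ on a single quarter, $I$ is the full range of $\tilde H_1$ on $C$; therefore $\tilde H_1$ attains each value of $I$ exactly once on each quarter, it attains its maximum and its minimum over $C$ only at the endpoints of the quarters --- that is, precisely at the points with $xy=0$ --- and it takes equal values at $p=(x,y)$ and $-p=(-x,-y)$ because it depends only on $x^2$. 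The main obstacle is fact (a): pinning down that the fractional linear function has no pole over $[0,1]$; I expect this to rest, as sketched, on the non-reality of the points $T^{k}\nu_i$ governing the polar divisor, where the distinguished tangents $l_i$ (tangent to every caustic) and the residue computation from the proof of the Claim re-enter the picture.
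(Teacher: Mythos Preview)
Your approach is the paper's: both derive the theorem directly from the normal form \eqref{E.tildeH_1}, observing that a non-degenerate real fractional linear function of $t=x^2$ is strictly monotone on each quarter-arc, hence attains its extremes at $t=0,1$. You make explicit a point the paper leaves tacit, namely that the denominator $c'_nt+d'_n$ has no zero on $[0,1]$; your argument via real fibers of $\pi$ over $C(\R)$ (or boundedness of $\sum\cos\alpha_i$) is sound for elliptic caustics, but note that for a hyperbolic caustic the eastern and western arcs of $C(\R)$ have non-real $\pi$-fibers and carry no real billiard trajectory, so neither of your two justifications covers the full range $t\in[0,1]$---the paper does not address this either.
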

Clearly this entails a corresponding statement for the sum $\sum_{i=-m}^m\cos \alpha_i$. 

The case of even $n=2m$ is similar, just a little more laborious: it suffices to observe that we may write $T=U^2$ for an automorphism $U$ of $\E_s$ and then the sum once symmetrized becomes $h^*(U^{-m}(\xi))+\ldots +h^*(U^{-1}(\xi))+h^*(U(\xi))+\ldots +h^*(U^m(\xi))$.  We omit the verifications for brevity.

It should also be possible  to express the numbers $a_n,...$ in terms of significant quantities, as has been done for the original of the mentioned authors, but we have not  performed this analysis. 

We suspect that the maximum (or minimum)  is not attained always at the same points, but that it is attained at vertices (on the lines $xy=0$) which alternate (finitely many times) depending on the caustic. However we have not proved this nor made particular effort, so that we do not express any opinion on how difficult this could be.

\medskip

On the other hand, we do not know whether these results admit (simple) proofs not using the elliptic description.

\bigskip

\section{Final remarks}

It is clear how Theorem \ref{T.P_2} is related to the Question formulated in the Introduction, and indeed this represents maybe the simplest issue of it.
Let us now illustrate   how also some of our finiteness theorems on billiards  enter into this frame. 

\smallskip

{\tt Theorem \ref{T.buca} and the Question}. Consider for instance Theorem \ref{T.buca}. The algebraic surface mentioned in the question will be the billiard elliptic surface $\mathcal{X}$, which, we recall, is birationally isomorphic to the product $C\times C \simeq \P_1\times \P_1$, hence also to the plane $\P_2 $. The group of endomorphisms $\Gamma$ is the cyclic group generated by the billiard map $\beta$, viewed as an automorphism of the surface $\mathcal{X}$. The curves $L_1$ (resp. $L_2,L_3$) are defined by the pairs $(x_1,x_2)\in C\times C \approx \mathcal{X}$ such that the line joining $x_1,x_2$ passes through $p_1$ (resp. through $p_2$, through $h$). Theorem \ref{T.buca} may be stated as asserting  that there are only finitely many points of $L_1$ whose orbit under $\Gamma$ intersects both $L_2$ and $L_3$.

Note that, since the surface $\mathcal{X}$ is rational, the automorphism $\beta$ can also be viewed as a rational automorphism of the plane $\P_2$ (a Cremona transformation). As we already remarked, the  methods of proof in these two cases are very different.
\smallskip

We have seen in Proposition \ref{P.P_2}, and in the examples at the end of paragraph \ref{S.P_2}, that, in the case of the plane $\P_2$ and a linear automorphism, the hypothesis that the lines belong to distinct orbits does not guarantee the finiteness of the set of orbits intersecting all of them. Similar counter-examples arise on elliptic surfaces.

Consider for example the case in which $L_1$ is the image of the zero section and the curves $L_2,L_3$ are distinct torsion curves  (images of algebraic torsion sections); taking for $\beta$ the autorphism of the surface induced by translation with respect to another (non-torsion), section we obtain that  infinitely many  torsion  points for $\beta$ have an orbit intersecting all the three curves. 

\smallskip

{\tt Another example of infinitude of orbits intersecting three curves}. We can produce another example,  still concerning elliptic surfaces and related to  Example \ref{Ex.u=-v} (involving  the projective plane).

Let again $\mathcal{X}$ be an elliptic surface with a section $\beta$ of infinite order, let $L_1$ be the zero section and take another arbitrary curve $L_2$ on $\mathcal{X}$. Let $F:\mathcal{X}\to \mathcal{X}$ be the automorphism sending $P\to -P$ (with respect to the group law on the fibers of the elliptic surface). Put $L_3=F(L_2)$. Denoting again by $\beta$ the translation map induced by the section $\beta$, we observe that,   as it was the case in Example \ref{Ex.u=-v}, $F\circ \beta\circ F^{-1}=\beta^{-1}$, and $F$ induces the identity on $L_1$. 
Then for every point $P\in L_1$ and every integer $m\in \Z$ such that $\beta^m(P)\in L_2$, it holds $\beta^{-m}(P)\in L_3$.

\smallskip

{\tt Further links with the Dynamical Mordell-Lang Conjecture}. Let us describe  more formally the link between our results and the so-called Dynamical Mordell-Lang Conjecture, for which we refer  to the book \cite{BGT} by J. Bell, D. Ghioca and T. Tucker. It turns out that our results are instances of a case of the Dynamical Mordell-Lang problem for an automorphism group of rank two.

Let us consider again our Theorem \ref{T.P_2}. To insert this result  into the frame of the Dynamical Mordell-Lang Conjecture, let us consider the algebraic four-fold $\mathcal{X}\simeq \P_2\times \P_2$ parametrizing pairs of lines on the plane. Given an automorphism $\beta$ of the plane, we define an action on $\mathcal{X}$ by the commutative group $\Z^2$ by setting 
\begin{equation*}
\Z^2\times \mathcal{X} \ni ((m,n),(L,L'))\mapsto (\beta^{-m}(L),\beta^{-n}(L'))
\end{equation*}
Given a line $L_1$, consider the hypersurface $\mathcal{Y}_{L_1}=\mathcal{Y}\subset\mathcal{X}$ formed by the pairs $(L,L')\in\mathcal{X}$ such that $L\cap L'\cap L_1\neq\emptyset$. 

Finally, fix two more lines $L_2,L_3$, so that the pair $(L_2,L_3)$ is a point of $\mathcal{X}$.

Given a pair $(m,n)\in\Z^2$, the existence of a point $P\in L_1$ such that $\beta^m(P)\in L_2$ and $\beta^n(P)\in L_3$ amounts  to the condition that $(\beta^{-m}(L_2),\beta^{-n}(L_3))\in \mathcal{Y}$. Hence, the problem treated in Theorem \ref{T.P_2} is equivalent to that of  describing the pairs $(m,n)$ such that the corresponding image of the point $(L_2,L_3)$ lies on the closed proper subvariety $\mathcal{Y}$ of $\mathcal{X}$. 

No general result seems to be known in the context of $\Z^2$-actions; to our knowledge, differently from the one dimensional case of $\Z$-actions,  no general conjecture has been formulated so far.

In the specific example just described, we proved that generically   such pairs $(m,n)$ are finite in number, while in particular cases we found infinite families which either consist  of lines in $\Z^2$ or in `exponential families' of the form $(m,a\lambda^m+bm+c)$, for fixed $a,b,c,\lambda$. 

\medskip

{\tt Dynamical viewpoint on Theorem \ref{T.angolo}}. Concerning  Theorem \ref{T.angolo}, we can view it as a statement about finiteness of periodic points in a subvarieties. Here are the details. Consider the four-fold $\mathcal{X}\times \mathcal{X}$, parametrizing pairs of segments of a billiard trajectory. It is endowed by the diagonal automorphism $(\beta,\beta)$. The pairs $(x,y)$ corresponding to shots from a given point $p_0$ forming a given angle $\alpha$ form a curve $L\subset \mathcal{X}\times \mathcal{X}$. If the two shots $x,y$ are both periodic for $\beta$, then so is the pair $(x,y)$ for $(\beta,\beta)$. Hence Theorem \ref{T.angolo} asserts the finiteness of periodic points lying on $L$. Note that the periodic subvarieties of $\mathcal{X}\times \mathcal{X}$, i.e. those formed by the fixed points of the itarates of $\beta$, are two dimensional (product of two curves in $\mathcal{X}\times\mathcal{X}$, so the expected condition for a subvariety $L\subset \mathcal{X}\times\mathcal{X}$ to contain infinitely many periodic points is that $\dim L \geq 2$. 

\medskip

{\tt Links with other issues}. A very recent work by S. Cantat and R. Dujardin \cite{CantDu} studies orbits for a group of automorphisms of surfaces. Possibly the present methods can be applied in some cases to deduce the finiteness of periodic orbits  outside exceptional cases, which could be classified. A first case to treat might be that of a double elliptic fibration on a surface.
\bigskip

\section{Appendix  - with the collaboration of Julian Demeio}

In this Appendix we shall prove in particular Theorem \ref{T.esiste}. But we shall develop several other results, for completely general sections of elliptic schemes, both in the real and the complex case.

For the case of the real billiard we shall give quite explicit formulae for the constants which express the asymptotics. We remark that, for this issue,  some similar analysis has been carried out in the book \cite{Du} (where a special attention is given to the s-called QRT maps which we do not consider here). However the present treatment is different in several respects: it is direct and  essentially  self-contained and develops formulae in terms of elliptic integrals which we have not found in the existing literature. 

\medskip

\subsection{Proof of Theorem \ref{T.esiste}}\label{SS.esiste} We start by recalling very  briefly the notion of Betti map (for which see especially  \cite{CMZ} and \cite{ACZ}). 

\subsubsection{The Betti map of a section}\label{BettiMap}  Let $\pi:\Aa\to B$ be an elliptic scheme over an (affine) complex smooth curve $B$, and let $\sigma:B\to\Aa$ be a section. Locally for $b\in B(\C)$ we may represent the elliptic curve $\Aa_b\cong \C/\Lambda_b$ analytically as a complex torus, where  $\Lambda_b=\Z\omega_1(b)+\Z\omega_{2}(b)$ is  a lattice and the {\it periods} $\omega_{i}$ vary locally holomorphically on $B$.  Again locally on  disks $U\subset B$ we have exponential maps $\exp_b:\C\to\Aa_b:=\pi^{-1}(b)$ varying holomorphically on $U$, and we may take an elliptic logarithm $\tilde\sigma$ of the section, so $\tilde\sigma:U\to\C$ is holomorphic and of the shape $\tilde\sigma(b)=\beta_1(b)\omega_1(b)+\beta_2(b)\omega_2(b)$, for real-valued real-analytic functions $\beta_i$ on $U$. These are called ``Betti coordinates'' of $\sigma$ and the map $b\mapsto (\beta_1(b),\beta_2(b))$ is called  ``Betti map'' of $\sigma$.  Of course this  holds only locally, the map is determined only up to the addition of integer constants, and  there are monodromy transformations if we perform analytic continuation.  Also, this may be done more generally for schemes of abelian varieties over a complex algebraic base variety. See \cite{Z3}, \cite{CMZ}, and \cite{ACZ} for much more on this. 

\medskip

{\tt Betti map on real points}.  Suppose that $\Aa$ is defined over $\R$, so for $b\in B(\R)$ the group $\Aa_b(\R)$ is not empty and has one or two connected components.  Let  $\sigma$ be a section defined over $\R$ and let $b_0\in B(\R)$,  so  $\sigma(b_0)\in\Aa_{b_0}(\R)$.      In some neighbourhood $U\subset B(\C)$ of $b_0$, the  values of an elliptic logarithm $2\tilde\sigma$  of $2\sigma$ at points $b$ of $U(\R)$, where $\sigma$ is real,  will be of the shape 
$(2tr+m)\omega_1(b)+(2ts+n)\omega_2(b) $ for some real $t=t(b)$ and integers $r,s,m,n$, necessarily constant, by continuity, in a neighbourhood  of $b_0$ which we may assume to be $U$.  Hence the Betti map of $\sigma$ restricted to real points in the neighbourhood will be of the shape $b\mapsto (rt(b)+{m\over 2},st(b)+{n\over 2})$, for some real-analytic function $t$, hence mapping to a segment of a rational line in $\R^2$.

Note that we may always choose the periods $\omega_1, \omega_2$ in such a way that $\overline{\omega_{2}(b)}=\omega_2(b)$ for $b \in U(\R)$. In this case, with the notation above, we may assume that $s=1, r=0$, i.e. the Betti map takes the shape $b \mapsto (\frac{m}{2},t(b)+\frac{n}{2})$ on $U(\R)$. We will then refer to the Betti coordinate associated to the period $\omega_2$ on $U$ as the {\em real} Betti coordinate. The function $U(\R) \ni b \mapsto t(b)+\frac{n}{2}$ will  be referred to as the {\em real} Betti map.

For the Legendre curve things may be described even more precisely. Let us assume that $\lambda$ is real. Then we have recalled that the corresponding lattice $\Lambda_\lambda$ is generated by a purely imaginary period $\omega_1$ and a real period $\omega_2$, that we have expressed explicitly in the region $0<\lambda<1$.

\medskip

Recall now that we have the elliptic scheme $\pi:\L\to\P_1$. 
Namely, a point $(p,l)$ of $\E$ yields a caustic $C_s$ to which $l$ is tangent,   a corresponding point $(p,v)$ on the phase space, and finally a point on $\L$ obtained on applying the isomorphism $\phi$ above.  Here a parameter on $\P_1$ is given by $s^2$, which determines the caustic, so $\pi\phi((p,v))=s^2$ (see also the third of equations \eqref{E.Ys}), and the fibers of $\pi$ are the $\L_s$ (which really depend on $s^2$). 
The billiard map determines 
a section of $\L$, however the base should be extended to the curve with function field $K(s,\sqrt{s^2-1},\sqrt{s^2-c^2})$, $K=\Q(c,\sqrt{c^2-1})$,  after removing the points with $s=0,\pm c,\infty$.   

\smallskip

{\tt Betti map of the billiard section}.  Let $B(\lambda)$ denote the billiard section expressed on the Legendre curve, as in equations \eqref{E.bmapX} and \eqref{E.bmapY}, where $0<s<1$ and we may choose the positive sign and the positive square root in the second formula. In order to express the elliptic logarithm, and hence the Betti map, we shall use \eqref{E.masser}, where the sign is found to be positive (since we have chosen the  positive sign and square root in \eqref{E.bmapY}).

Suppose first that $0<\lambda<1$, so $0<s<c^2$ and  the caustic is a hyperbola.  The Betti map of $(\lambda,0)$ is $(1/2,1/2)$ (up to integer points), since $(\lambda,0)$ corresponds to $(\omega_1+\omega_2)/2$ by the discussion in \S\ref{SSS.torsion}.   The section with constant abscissa $1/c^2$ takes values in the connected component of the identity in $L_\lambda(\R)$, hence its Betti map takes values in  $\Z\times \R$. Also, 
   by the discussion in \S\ref{SSS.weier} an elliptic logarithm of the first section is given by 
$-(1/2)\int_{1/c^2}^\infty \d x/\sqrt{x(x-1)(x-\lambda)}$, where we choose the positive sign of the square root, and where the minus sign is due to the fact that $\wp_\lambda'(\mu)$ is negative in the interval $(0,\omega_2/2)$ whereas $k(\lambda)>0$. Hence, by the second equation in \eqref{E.periods}, and denoting
\begin{equation}\label{E.int}
I_u(\lambda)=  \int_{u}^\infty{\d x\over \sqrt{x(x-1)(x-\lambda)}}, \qquad u\ge 1,
\end{equation}
 the Betti map of the billiard section for $0<\lambda<1$ is given by

\begin{equation}\label{E.betti1}
\beta(\lambda)=(\beta_1(\lambda),\beta_2(\lambda))=\left({1\over 2} , {1\over 2}   -{  I_{1/c^2}(\lambda)\over 2I_1(\lambda)   }
       \right).
\end{equation}

We prove that the function on the right is monotonic increasing  in $\lambda$.  In fact, it suffices to show that $I_{1/c^2}(\lambda)'I_{1}(\lambda)-I_{1/c^2}(\lambda)I_{1}(\lambda)'<0$ where the dash denotes derivative with respect to $\lambda$. The derivative  $I_u'(\lambda)$ is obtained by a similar integral, where however the integrand is multiplied  by $\alpha(x):=(2(x-\lambda))^{-1}$. Then, denoting  for this argument by $f(x)$ the integrand expressing $I_u(\lambda)$, we have 
\begin{equation*}
I_{1/c^2}(\lambda)'I_{1}(\lambda)-I_{1/c^2}(\lambda)I_{1}(\lambda)'=\int\int_A f(x_1)f(x_2)(\alpha(x_1)-\alpha(x_2))\d x_1\d x_2,
\end{equation*}
where $A=(1/c^2,\infty)\times (1,\infty)=((1/c^2,\infty)\times (1/c^2,\infty))\cup((1/c^2,\infty)\times (1,1/c^2))=A_1\cup A_2$, say.  The integral over $A_1$ vanishes since the integrand is anti-symmetric. The integral over $A_2$ is negative since $\alpha$ is a decreasing function and $f$ is positive.

For $\lambda$ tending to $1$, $I_1$ diverges whereas $I_{1/c^2}$ remains bounded so  $\beta$ tends to $(1/2,1/2)$. For $\lambda$ tending to $0$, both integrals converge and we may pass to the limit under the integral sign, so for $u\ge 1$,  $I_u(\lambda)\to \int_u^\infty\d x/(x\sqrt{x-1})=2\int_{\sqrt{u-1}}^\infty\d z/(z^2+1)=\pi -2\arctan\sqrt{u-1}$.  Therefore $\beta$ tends to $(1/2, \pi^{-1}\arctan\sqrt{(1/c^2)-1})$. 

\medskip

We have already commented the behaviour of the billiard map in the case $\lambda=1$, i.e. $s=c^2$; this is a degenerate case, when the caustic degenerates into the segment connecting the foci. Any billiard shot passing through one focus will give rise to a sequence of segments passing alternatively through the foci, and tending to the horizontal segment, without reaching it unless the whole trajectory is horizontal, and periodic of period $2$. This also explains the `$1/2$' in the formula. (Note however that the geometrical intuition is not equally effective when $\lambda=0$.)

\medskip

Let us now consider the case $1<\lambda<1/c^2$, when the caustic is an ellipse.  Now the Betti map of $(\lambda,0)$ is $(0,1/2)$ (still by \S \ref{SSS.weier}).  Again, the section with constant abscissa $1/c^2$ takes values in the connected component of the identity in $L_\lambda(\R)$, hence its elliptic logarithm may be taken in $\R\omega_2(\lambda)$. This logarithm may be again expressed by  $(-1/2)I_{1/c^2}(\lambda)$, by similar considerations as before.

The period $\omega_2$ now equals $\int_\lambda^\infty \d x/y$; however for computing the derivative we prefer to use the alternative formula
\begin{equation*}
\omega_2(\lambda)=\int_\lambda^\infty{\d x\over y}=\int_0^1{\d x\over y},\qquad 1<\lambda,
\end{equation*}
which follows either by substitution $x\mapsto \lambda/x$ or by looking at the corresponding integrals on a representative torus, or by observing that the connected components of real points on the torus  are homologous, thus lead to equal integrals.

The same considerations as above then show that the Betti map of $B(\lambda)$ is given by 
\begin{equation}\label{E.betti2}
\beta(\lambda)=(\beta_1(\lambda),\beta_2(\lambda))=\left(0, {1\over 2}   -{  I_{1/c^2}(\lambda)\over 2\int_0^1{\d x\over y} }
       \right)=\left(0,{ \int_\lambda^{1/c^2}{\d x\over y} \over 2\omega_2(\lambda)}\right).
\end{equation}
With the same notation as above now the derivative of the ratio $I_{1/c^2}/\omega_2$    is found to be positive because equal to $\int\int_Bf(x_1)f(x_2)(\alpha(x_1)-\alpha(x_2))\d x_1\d x_2$, where now $B=[1/c^2,\infty]\times [0,1]$ and where now $\alpha(x_1)\ge 0$ whereas $\alpha(x_2)\le 0$ for all relevant values. 

The limit of $\beta(\lambda)$ for $\lambda\to 1^+$ is (for the same reason as before) $(0,1/2)$, whereas for $\lambda\to {1/c^2}^-$ we have  easily $I_{1/c^2}\omega_2^{-1}\to 1$, hence the limit  of $\beta$ is $(0,0)$. 

Of course one may find various other expressions for these functions, e.g. power series expansions, to approximate  their values rapidly.

\medskip

{\tt Betti map and rotation number}.  We briefly point out an interpretation of the Betti (billiard) map as a rotation number; for this notion we refer to C. Yoccoz's paper in the volume \cite{Wald}. 

For simplicity let us consider only the case when the caustic is an ellipse. So, fix such a caustic  $C_s$, $c^2<s<1$,  and consider the map $f=f_s:C\to C$ defined as follows. For $x\in C$ there are two tangents from $x$ to $C_s$. Choose then the one meeting $C$ in a point $y\neq x$ which comes first on travelling $C$ from $x$ in the clockwise direction.  We put $f(x):=y$. (This may be clearly expressed also on using the billiard map on $Y_s$, but the present definition is more direct. Note also that referring to $Y_s$   would not lead to an algebraic notion because of the orientation.) 

Now, it is clear that $f$ is a bijective map $C\to C$. After identifying $C$ with the circle $S_1$, $f$ may be thought of as a topological automorphism of the circle, and we may iterate it and consider its {\it rotation number}. We only recall from the quoted article that this may be defined as the supremum of the set of fractions $p/q$ such that the   iterates $f^{\circ m}(x)$, $0\le m\le q$,  locate a sequence on $C$ making $p$ tours  through $C$.  If we now think of $Y_s(\R)$ as the  identity component of the real points on an elliptic curve, and of the billiard map as a translation on $Y_s(\R)$,  it is immediate to realize that this rotation number equals $\beta_2(\lambda)$. We leave the easy verifications to the interested readers. 

With this interpretation for instance  it becomes {\it a priori} clear that $\beta_2$ is a decreasing function for $1<\lambda<1/c^2$, and that it tends to $0$ at the upper extreme. Indeed, as $\lambda$ grows the corresponding caustics strictly  increase, hence the values $f_s(x)$ decrease in $s$ for any given point $x$.  When $s\to 1^-$ the caustic $C_s$ approaches the ellipse $C$, so $f_s$ tends to the identity and the rotation number tends to $0$. 
  
\bigskip

\begin{proof}[Proof of Theorem \ref{T.esiste}] We start by recalling an elementary euclidean argument for the first existence assertion. This is clear for $n=1$ so suppose $n\ge 2$. Consider all sequences $x_0:=p_1, x_1,..., x_{n-1}, x_n:=p_2$, where $x_1,\ldots ,x_{n-1}$ lie on $C$.  By compactness there is a choice so that the total length $|p_1-x_1|+|x_1-x_2|+\ldots +|x_{n-2}-x_{n-1}|+|x_{n-1}-p_2|$ (of the  piecewise linear trajectory with $n$ segments and ordered vertices in the sequence) is maximal.  We contend that this is a billiard trajectory. Indeed, let us first assume $1<j<n-1$ and consider  the segments $x_{j-1}-x_j$ and $x_j-x_{j+1}$, that is those not containing $p_1$ or $p_2$.  Consider the line through $x_j$ which   forms equal angles with these segments and has both  $x_{j\pm 1}$ in the same half-plane; if this is not tangent to $C$ at $x_j$ then it meets the ellipse at a point $x\neq x_j$   strictly  between $x_{j-1}$ and $x_{j+1}$.  But then by the Fermat-H\'eron principle, we would  have $|x-x_{j-1}|+|x-x_{j+1}|>|x_j-x_{j-1}|+|x_j-x_{j+1}|$, contradicting maximality.  Similarly if $j=1$ or $n-1$. This proves the claim  except possibly if $x_j=p_1=p_2$ (because maximality is intended with $p_1,p_2$ given). But, taking into account the theorem on caustics,  the argument proves that all segments are tangent to a same caustic, hence the reflexion law holds also at $p_1$. 
	
	Note that (on taking $p_1=p_2$ and $n$ a prime)  this yields another proof of Proposition \ref{P.nontors}.
	
	\medskip
	
	Let us now take a point $p=(a,b)\in \T$ and prove the stated estimates, indicating at the same time how to obtain   formulae for the constants $c_{o}, c_e$. For brevity we treat in full only the case when $p\in\T^o$ and $b>0$, $0<a<c$.  
	
	Let $\xi\in [-\infty,+\infty]$ be the slope of a billiard shot from $p$. By symmetry we can consider only the case when the shot hits $C$ on the right of $p$.  The line $\ell:y=\xi(x-a)+b$ from $p$ will be tangent to the caustic $C_s$ where
	\begin{equation*}
	s={(\xi a-b)^2+c^2\over \xi^2+1}.
	\end{equation*}
	This function of $\xi$ tends to $a^2$ at both $\pm\infty$, is increasing from the left until it reaches its maximum $M=M(a,b,c)$, then decreases until its minimum $m=m(a,b,c)$, and finally increases again indefinitely to the right. Since we are in the case $0<a<c$, for $\xi$ near to $-\infty$ the caustic will be a hyperbola. This will continue until $\ell$ will hit the focus $(0,c)$, when $\xi a-b=\xi c$ and $s=c^2$. In the interval $({-b\over c-a},{b\over c+a})$ the caustic will be an ellipse, and will return to be a hyperbola on the whole right of it. 
	
	We note that  easy geometry shows that the maximum will be attained when $C_s$ is an ellipse passing through $p$, and $\xi<0$, whereas $m$ will be attained when $C_s$ is a hyperbola through $p$, and $\xi>0$, in both cases $\ell$ being tangent to $C_s$ at $p$.  Of course $M,m$ can be very easily expressed explicitly but we omit the not very simple formulae. 
	
	Now let $n>0$ be an integer and suppose that the billiard shot corresponding to $\ell$ has period (dividing) $n$. This will happen if and only if the billiard map corresponds to a torsion point of order $n$ on $\L_s$, or if and only if the value  $\beta(\lambda)$ of the Betti map, where $c^2\lambda=s$, is rational with denominator (dividing) $n$. 
	
	Now suppose first that $n$ is odd. Then by \eqref{E.betti1}, the caustic cannot be a hyperbola, hence we confine our attention to the elliptic caustics, where we require that $n\beta_2(\lambda)$ is integer in \eqref{E.betti2}. By the above considerations this will corespond to the $\xi$ in the interval when the caustic is an ellipse. Since all involved functions are continuous and never locally constant it follows that the number of relevant values is $2|\beta_2(M/c^2)-\beta_2(1)|\cdot n+O(1)$. Thus, taking into account the above mentioned symmetry, and recalling $\beta_2(1)=1/2>\beta_2(M/c^2)$, we have
	\begin{equation*}
	c_o=2-4\beta_2\left({M\over c^2}\right).
	\end{equation*}
	We also note that this value is constant for points $p'$ on the elliptic caustic through $p$.
	
	If $n$ is even things are similar but we have to consider also hyperbolic caustics. By an easy argument as above the relevant value is found to be
	\begin{equation*}
	c_e=2\left(1-\beta_2\left({M\over c^2}\right)-\beta_2\left({m\over c^2}\right)\right).
	\end{equation*}
\end{proof} 

Proposition \ref{P.reali1} below will provide another approach to the proof of Theorem \ref{T.esiste}, which also applies to the existence part in Theorem \ref{T.ritorno}.

\begin{rem}\label{R.irrational}  (i)  {\tt About elementary formulae and rational  values}.  It is to be remarked that the values so obtained for $c_o,c_e$ are not expressible by elementary functions (in the classical sense) of the parameters, at any rate for {\it generic} values of them. This may be proved from the formulae \eqref{E.betti1} and \eqref{E.betti2},  or from the formulae in Remark \ref{R.maninb},  using for instance the theory for the differential equations satisfied by the periods and similar  functions, which are known not to be satisfied by elementary functions. We cannot pause more on this issue here, and refer to the article by F. Beukers  in \cite{Wald}. In any case we point out that  $M,m$ are not constant (their level curves are resp. elliptic and hyperbolic caustics) and are algebraically independent, so we may consider them as independent variables.
	
	Also, the function $\beta_2$ is not rational but  when $c\in\Q$ it is  a ratio of functions from a finite dimensional space (over $\C$) spanned by $G$-functions (as is not difficult to prove). Then it may be proved that  the {\it rational} values of $c_o,c_e$ at rational points $p$ are subject to (severe) restrictions. This follows e.g. from  results in the paper \cite{DZ} of P. D\`ebes with the second author.\footnote{Known theorems of Bombieri-Pila - see Appendix A in \cite{Z} - can also yield some results, however subject to restrictions on heights.}  In particular, we may state the following conclusion:
	
	{\it  For a given ellipse $C$ with $c\in\Q$, there are infinitely many rational points $p\in\T^o$ such that $c_o,c_e$ are both irrational}.
	
	Note that if e.g. $p\in C$ then  $c_o=2$, hence $\T^o(\Q)$ cannot be replaced with $C(\Q)$. We leave the formal proof of this assertion to the interested readers. 
	
	(ii)  {\tt Bicyclotomic polynomials}. In view of formula \eqref{E.masser} the values of $\lambda$ which make torsion the billiard section are the same which make torsion the section with constant abscissa $1/c^2$. Hence these values are the real roots of the polynomials studied in the paper \cite{MZb} of Masser  and the second author, and called {\it Bicyclotomic} therein.  The above formulae give therefore estimates for the number of these real roots. 
\end{rem}

\subsection{Asymptotic estimates on torsion values for general sections}

We put ourselves in the context of Section \ref{BettiMap}, and we assume that
we have two algebraic sections $\sigma,\tau$ of $\Aa \to B$, both defined over $\R$. We prove

%

\begin{prop}\label{P.reali1}
	If the algebraic section $\sigma$  is not torsion, then for  large enough integer $N$, the set of points $x$ of $B(\R)$ such that $\tau(x)=N\sigma(x)$ is  non empty and for varying $N$ is dense in $B(\R)$. Actually, in any neighbourhood $I$ of a $b_0\in B(\R)$ the number of such points is $c\cdot N + O(1)$, for a $c=c(I)>0$ independent of $N$.
\end{prop}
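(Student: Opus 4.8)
\emph{Plan of proof.} The idea is to read the relation $\tau(x)=N\sigma(x)$ through the Betti map and turn it into a problem of counting integer points near the graph of a real-analytic function. All the assertions are local on $B(\mathbb R)$, so fix $b_0\in B(\mathbb R)$ and a neighbourhood $I\subset B(\mathbb R)$ of it, and choose on a complex disc $U\supset b_0$ periods $\omega_1,\omega_2$ with $\omega_2$ real on $U(\mathbb R)$, as in \S\ref{BettiMap}. By the discussion there, on $U(\mathbb R)$ the Betti maps of $\sigma$ and of $\tau$ take the form $b\mapsto(\varepsilon_\sigma,\beta_\sigma(b))$ and $b\mapsto(\varepsilon_\tau,\beta_\tau(b))$, where $\varepsilon_\sigma,\varepsilon_\tau$ are constants in $\{0,\tfrac12\}$ (being real-analytic functions into a discrete set) and $\beta_\sigma,\beta_\tau\colon I\to\mathbb R/\mathbb Z$ are real-analytic. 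Thus
\[
\tau(x)=N\sigma(x)\ \text{in}\ \mathcal A_x\quad\Longleftrightarrow\quad \bigl(\varepsilon_\tau-N\varepsilon_\sigma,\ \beta_\tau(x)-N\beta_\sigma(x)\bigr)\in\mathbb Z^2 .
\]
The first coordinate is a congruence on $N$ alone: it holds for $N$ in a subset $S\subseteq\mathbb Z$ equal to all of $\mathbb Z$, or to a residue class modulo $2$, or (only when $\varepsilon_\sigma=0$ and $\varepsilon_\tau=\tfrac12$, a degenerate case which we discard and which does not occur in the applications, where $\tau$ is the zero section or twice a section) to the empty set. In the first two cases $S$ has positive density, and we restrict $N$ to $S$ henceforth; the remaining condition then becomes the vanishing of $\beta_\tau(x)-N\beta_\sigma(x)$ in $\mathbb R/\mathbb Z$.

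\emph{Key point: $\beta_\sigma$ is non-constant on $I$.} Since $\sigma$ is non-torsion and we may assume the elliptic scheme non-isotrivial (as it is in all our applications), the Betti map of $\sigma$ cannot be locally constant on $B(\mathbb C)$: a local identity $\tilde\sigma=\alpha_1\omega_1+\alpha_2\omega_2$ with constant real $\alpha_1,\alpha_2$ would, upon applying the monodromy of the pair $(\omega_1,\omega_2)$ around suitable loops and using that $\tilde\sigma$ is defined only modulo the lattice, force $\alpha_1,\alpha_2\in\mathbb Q$, i.e. $\sigma$ torsion (this is the standard input from the theory of the Betti map, see \cite{CMZ} and \cite{ACZ}). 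As $\varepsilon_\sigma$ is locally constant on $B(\mathbb R)$, it follows that $\beta_\sigma$ is non-constant on every open arc of $B(\mathbb R)$, hence on $I$; being real-analytic it has there only finitely many critical points.

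\emph{The count.} Deleting from $I$ its two endpoints and the finitely many critical points of $\beta_\sigma$, we split $I$ into finitely many open arcs $I_1,\dots,I_r$ on each of which $\beta_\sigma$ (lifted to a real-valued function) is a strictly monotone real-analytic diffeomorphism onto an interval $J_j\subset\mathbb R$ of positive length $\ell_j$. Parametrising $I_j$ by $y=\beta_\sigma(b)$ and writing $\phi_j(y)=\beta_\tau(\beta_\sigma^{-1}(y))$, which is real-analytic on $J_j$, the solutions in $I_j$ correspond to $y\in J_j$ with $Ny-\phi_j(y)\in\mathbb Z$. For $N>\sup_{j}\sup_{J_j}|\phi_j'|$ the function $y\mapsto Ny-\phi_j(y)$ is strictly increasing on $J_j$, so the number of such $y$ equals the number of integers in an interval of length $N\ell_j-\bigl(\phi_j(\sup J_j)-\phi_j(\inf J_j)\bigr)=N\ell_j+O(1)$, the error bounded in terms of $\sigma,\tau,I$ only. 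Summing over $j$, the number of $x\in I$ with $\tau(x)=N\sigma(x)$ equals $c(I)\,N+O(1)$, where $c(I)=\sum_j\ell_j=\int_I|d\beta_\sigma|>0$ is the total variation of $\beta_\sigma$ on $I$. In particular this number is positive for all large $N\in S$; letting $b_0$ and $I$ vary shows that the union over $N$ of the solution sets meets every open subset of $B(\mathbb R)$, i.e. is dense.

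\emph{Main obstacle.} The one substantial ingredient is the non-local-constancy of the real Betti map of $\sigma$: it is here that the hypothesis "$\sigma$ non-torsion" together with non-isotriviality of the scheme is used, via the quotable monodromy argument of \cite{CMZ}, \cite{ACZ}. The remaining steps — the passage to Betti coordinates, the reduction modulo $2$ for the parity of $N$, and the counting of lattice points along a slightly perturbed line — are elementary. A minor point to check is that $c(I)=\int_I|d\beta_\sigma|$ is intrinsic (independent of the local choices of periods and of elliptic logarithm), which it is, being a total variation; in the billiard case this constant is exactly the one made explicit through the Betti map in \S\ref{SS.esiste}, which recovers the shape of the asymptotics $c_o\cdot n+O(1)$ and $c_e\cdot n+O(1)$ in Theorem \ref{T.esiste}.
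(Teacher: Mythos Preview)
Your approach follows the same strategy as the paper's: translate the equation $\tau(x)=N\sigma(x)$ into the condition that the real Betti coordinate $N\beta_\sigma-\beta_\tau$ takes an integer value, and then count. You also land on the same constant $c(I)=\int_I|\d\beta_\sigma|$. The handling of the first Betti coordinate as a parity constraint on $N$ is a nice explicit touch that the paper leaves implicit.

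There is, however, a genuine gap in the counting step. After deleting the critical points of $\beta_\sigma$ you parametrise each $I_j$ by $y=\beta_\sigma$ and write $\phi_j=\beta_\tau\circ\beta_\sigma^{-1}$; you then assert that for $N>\sup_j\sup_{J_j}|\phi_j'|$ the function $Ny-\phi_j(y)$ is strictly monotone. But $\phi_j'(y)=\beta_\tau'(x)/\beta_\sigma'(x)$ (with $x=\beta_\sigma^{-1}(y)$), and at an interior critical point $p$ of $\beta_\sigma$ there is no reason for $\beta_\tau'(p)$ to vanish; hence $|\phi_j'|$ typically blows up near the endpoints of $J_j$, and no $N$ satisfies your hypothesis. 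What you actually need is a uniform (in $N$) bound on the monotone number of $N\beta_\sigma-\beta_\tau$ on $I$; with that, the count is $\int_I|N\beta_\sigma'-\beta_\tau'|+O(1)=Nc(I)+O(1)$. This uniform bound is precisely what the paper obtains by invoking the definability (in $\R_{an,\exp}$) of $\beta_\sigma,\beta_\tau$, via Jones--Schmidt, together with Hardt's theorem applied to the family $(x,\epsilon)\mapsto\beta_\sigma(x)-\epsilon\beta_\tau(x)$ with $\epsilon=1/N$. If you wish to keep your more elementary route, you can fill the gap by a complex-analytic argument: on a small complex neighbourhood of the compact $I$ the holomorphic functions $\beta_\sigma'-\tfrac1N\beta_\tau'$ converge uniformly to $\beta_\sigma'\not\equiv0$, so by Hurwitz their number of zeros on $I$ is bounded for all large $N$. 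Either way, this step cannot be skipped.

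A second point: the paper stresses that control is also needed near the places of bad reduction of the scheme, where $\beta_\sigma,\beta_\tau$ cease to be real-analytic; your argument is implicitly for a compact $I$ inside the good-reduction locus, which is fine for the local statement but should be said.
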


\begin{proof}[Proof of Proposition \ref{P.reali1}] The proof is conceptually very easy, however near the points of bad reduction of the elliptic scheme one needs some control of the section and the Betti map. This could be dealt with directly, but here we proceed on invoking some results on definability of the relevant Betti maps.

	Consider the (real) Betti maps $\beta_\sigma,
	\beta_\tau$ associated to the two sections, well-defined as above on an open disk $U\subset B$, supposed to contain $b_0\in B(\R)$. We may assume that $U \cap B(\R)=I$. Let $N$ be a large integer  and consider $\beta_\sigma-N^{-1}\beta_\tau$ on $U$. 
	
	The function $\beta_\sigma-N^{-1}\beta_\tau$, evaluated at  a point $x \in I$ takes a rational value with denominator dividing $N$ precisely when $\tau (x)= N\sigma(x)$. We call this number $A_N$.
	
	We need now the following definition and three observations.

	For a continuous function $f:I \rightarrow \R$, we say that the \emph{monotone number} of $f$ is the minimum integer $r$ (possibly $\infty$) such that there exists a partition of $I=[a,b]$ in $r$ intervals $I=[a,a_1]\cup \cdots \cup [a_{r-1},b], \  a \leq a_1 \leq \cdots \leq a_{r-1} \leq b$, and such that $f$ is (weakly) monotone on each of these $r$ intervals.
	
	\underline{First observation}. Let $N$ be a natural number, and let $f:I \rightarrow \R$ be a continuous, piece-wise differential function of monotone number $r < \infty$. Then the number of $x \in I$ such that $f(x)=\frac{m}{N}$, for some $m \in \Z$, is $N \int_I |\d f|+O(r)$, where the big $O$ is absolute.
	
	\underline{Second observation}. Let $f(x,y):I \times [0,1] \rightarrow \R$ be a definable function (by definable we will always mean definable in $\R_{an,exp}$, we refer to \cite[p. 16]{tametopology} for the definition of definability and $o$-minimal models). Then the monotone number of $f(x,\epsilon):I \rightarrow \R$ is uniformly bounded for $\epsilon \in [0,1]$. In fact, letting $\tilde{f}=(f,id_\epsilon): I \times [0,1]\rightarrow \R \times [0,1]$, using Hardt's Theorem (\cite[Theorem 9.1.2]{tametopology}), one shows that there exists a finite cell decomposition $I \times [0,1]=C_1 \cup \cdots \cup C_R$ such that $\tilde{f}|_{C_i^o}$ (where $C_i^o$ denotes the open part of $C_i$) is continuous and it is either injective or of the form $(c,id_\epsilon)$, where $c$ is a constant function. In particular, for a fixed $\epsilon \in [0,1]$, $f(x,\epsilon)$ is weakly monotone on the $R$ (possibly degenerate) intervals $C_1|_{I \times \{\epsilon\}}, \ldots, C_R|_{I \times \{\epsilon\}}$.
	
	\underline{Third observation}. For a piece-wise differentiable function $f:I \rightarrow \R$ of monotone number $r$, we have that $\int_I |\d f| < r \lambda (f(I))$, where $\lambda$ denotes the Lebesgue measure. In particular, a bounded function with bounded monotone number satisfies $\int_I |\d f| < \infty$.
	
	Now, by a result of Jones and Schmidt \cite{JS}, $\beta_\sigma$ and $\beta_\tau$ are bounded definable functions on the domain $I$. In particular, by the second observation above, the monotone number of these functions is bounded, and we deduce by the third observation that $\int_I |\d (\beta_\sigma)| < \infty , \ \int_I |\d (\beta_\tau)| < \infty.$
	
	Now, as $N \to \infty$, we have:
	$$
	\frac{A_N}{N}=\int_I |\d (\beta_\sigma-N^{-1}\beta_\tau)| +O(1/N),
	$$
	where the equality follows from the first observation.
%
%

	Moreover, since  $\sigma$ is not a torsion section, its Betti map $\beta_\sigma$ is not constant, by a (special case of a) theorem of Manin (see  \cite{Man}, \cite{CZPonc} or \cite{ACZ} for results in higher dimensions). 
	In particular, the finite integral $c=\int_I |\d (\beta_\sigma)| \neq 0$ is non-zero, as desired.
\end{proof}

 \begin{small}
We recall a self-contained argument for the  assertion alluded in the proof, actually for then general case of $\R^d$, which would be useful for proving a complex analogue. {\it Let $f:U\to\R^d$ be a $C^1$-map  from a ball  $U\subset \R^d$ centred at $0$. Assume that for $x,y\in U/2$,   it satisfies  $|\d f(x)^{-1}|\le c$, where $c\ge 1$,   and that  $f(x+y)=f(x)+\d f(x)y+k(x,y)$, where $|k(x,y)|\le  (2c^2)^{-1} |y|$. Then  we assert that  there exists an open ball  $V$ depending only on $U,c$ such that $f(U)$ contains $f(0)+V$.}  
 
 For a proof one can follow  Newton's method. Let $V$  be a disk of small enough radius $r$ centred at $0$;   given $v\in f(0)+V$,  define a sequence $x_n$ as follows. Put $x_0=0$ and, having defined $x_n$, let $x_{n+1}:=x_n+\d f(x_n)^{-1}(v-f(x_n))$. Setting $x_{n+1}=x_n+y$, note that $f(x_{n+1})=f(x_n)+\d f(x_n)y+k(x_n,y)=v+k(x_n,y)$. Hence $|f(x_{n+1})-v|\le |(2c^2)^{-1}|\d f(x_n)^{-1}(v-f(x_n))|\le (2c)^{-1} |f(x_n)-v|$, so $|f(x_n)-v|\le r(2c)^{-n}$ by induction. Note also that $|x_{n+1}|\le |x_n|+c|v-f(x_n)|\le |x_n|+cr(2c)^{-n}$.   Hence the sequence $x_n$ lies in $U/2$ if $r$ is small enough and converges  to a solution of $f(x)=v$.
 \end{small}

\noindent {\it Proof of the existence part in Theorem \ref{T.ritorno}}. Recall that it remains to prove that, given an interior  point $p\in \mathcal{T}$, there exist infinitely many boomerang shots from $p$ of type $(2)$ and $(3)$: namely, trajectories passing once again through $p$ with  same direction but opposite orientation (type $(2)$) or with  the other possible direction (type $(3)$). 

Using the notation of the proof of the finiteness part of Theorem \ref{T.ritorno}, we associate to the point $p$ four sections $\pm \sigma_p, \pm \sigma_p'$ of the elliptic scheme $\mathcal{X}_p \to B_p$, obtained after a base change (e.g. from the Legendre scheme) depending on $p$. 
The sections $\sigma_p,\sigma_p'$ correspond to different choice of tangents for each caustic; change of sign corresponds to inversion of the orientation of the path.

Denote again by $\kappa:B_p\to \mathcal{X}_p$ the billiard section. As explained in the proof of the first part of the theorem,  boomerang shot of type $(2)$ correspond to  points $s\in B_p(\R)$v such that for some integer $n>0$, 
\begin{equation*}
\sigma_p(s)+n\kappa(s)=-\sigma_p(s)
\end{equation*}
i.e. $2\sigma_p(s)=-n\kappa(s)$;
  those of type $(3)$ are given by the relation 
\begin{equation*}
\sigma_p(s)+n\kappa(s)=\sigma_p'(s)
\end{equation*}
i.e. $(\sigma_p-\sigma_p')(s)=-n\kappa(s)$. 

Since, as already proved, $\sigma_p,\sigma_p'$ are linearly independent, so  in particular $2\sigma_p$ and $\sigma_p-\sigma_p'$ are both non-torsion, we can apply Proposition \ref{P.reali1}, concluding that both equations above admit infinitely many solutions $s\in B_p(\R)$ 
\qed

\begin{rem} \label{R.densereal} {\tt Complex and $p$-adic points}.  (i) An analogue of Proposition \ref{P.reali1} would follow for the complex points, following the same method of proof. More on this in Theorem \ref{ThmComplex}.
	
	Note also that there are examples proving that the restriction that $N$ has to be large cannot be omitted. This restriction will not be necessary for the sections coming from the billiard map, as follows from Theorem \ref{T.esiste} that we shall soon prove. 
	
	See the paper of B. Lawrence \cite{Lau} and \S 9  of  \cite{ACZ}   for a study of the Betti map on  the real points of a certain higher-dimensional base, giving density results similar in spirit to the present ones. On the other hand, density fails in the $p$-adic context, see  \cite{LZ} for an instance. 
	
	
	(ii) In the complex case, Theorem \ref{ThmComplex} (proven in \cite{CDMZ}, we give a quick sketch of the proof below) gives such an estimate. In this case, moreover, it has to be noted that the limit $\lim_{n \to \infty} \frac{A_n}{n^2}$ appearing in the theorem has also another meaning. Namely, one can show that it is equal to the \emph{canonical height} $\hat{h}(\sigma)$ of the section $\sigma$. To prove this, one has to use the fact that the points in  the base $B$ such that the differential  of the Betti map vanishes are a finite amount. This is proven in \cite{CDMZ}.
	
	(iii) 	In the case that $\sigma_M$ is the special Masser section, i.e. the one defined on the Legendre scheme $\mathcal{A} \to B'$ by $\sigma_M(\lambda):= (2,\sqrt{2(2-\lambda)})$, then the canonical height $\widehat{h}(\sigma_M)$ is equal to $\frac{1}{2}$ (see e.g. \cite[Example 3.4]{CDMZ}). $B'$ here denotes the cover of $\P_{1,\lambda}$ defined by the quadratic field extension $\C(\lambda) \subset \C(\sqrt{2-\lambda})$.
	
	One may use this to calculate the height of the billiard section $B(\lambda)$, as described in Remark \ref{R.bmap}. Let us denote by $B_c(\lambda)$ the billiard section associate to the ellipse with parameter $c$ (following the present notation of \eqref{E.C}). We note that, for $c_0=1/\sqrt{2}$, we have that $B_{c_0}(\lambda)=\sigma_M(\lambda)+ T_2$, where $T_2 = (0,\lambda)$ is a torsion section of order $2$. In particular, by general facts on heights, we have that $\widehat{h}(B_{c_0})= \widehat{h}(\sigma_M)$. Moreover, since $\widehat{h}(\sigma)$ is rational for every section $\sigma$ (see e.g. \cite[Section 11.8]{ellipticsurfaces}), and $\widehat{h}(B_{c})$ varies with continuity for $c \in \C \setminus \{0,\pm 1,\infty\}$, we find that $\widehat{h}(B_{c})$ is always equal to $1/2$ for $c \neq 0,\pm 1$.
\end{rem}

We note that the methods of \cite{DMM}, which deal with the distribution of points at which a section of an elliptic scheme attains torsion value, do not give results such as Proposition \ref{P.reali1}, although they give similar results for the complex points (which are independent from those on the real points, see Theorem \ref{ThmComplex} below).

\begin{rem}\label{R.segno}{\tt Non monotonicity of the (real) Betti map}. 
	We see from the proof of Proposition \ref{P.reali1} that we have that $c = \int_I |\d \beta_{\sigma}|$. It would be interesting if one {\em could} remove the absolute value from the formula. In fact, one can show, with arguments that would go beyond the scope of the paper, that $\int_{B(\R)}  \d \beta_{\sigma}$ is related to some intersection numbers on (a complete model of) the smooth real surface $\mathcal{A}(\R)$, when this complete model happens to be orientable. 
	However, it is not true for a general $\sigma$ that $\beta_{\sigma}$ is monotone (although it is in some specific cases, for instance the case where $\sigma$ is the section associated to the billiard shot and $I$ is a connected component of $B(\R)$, as shown in Theorem \ref{T.esiste}). In Example \ref{E.segnovaria1} below we provide three (classes of) examples of sections in which the sign of $\d \beta_{\sigma}$ is not constant on a connected segment $I$ of $B(\R)$, so that, in those cases one has that:
	\[
	c = \int_{I} |\d \beta_{\sigma}| \neq \left|\int_{I} \d \beta_{\sigma}\right|.
	\]
\end{rem}

\begin{example}\label{E.segnovaria1}{\tt Non-monotonicity of the (real) Betti map: 

Counterexample via linear combination.}
	Let $\sigma_1$ and $\sigma_2$ be two linearly independent   algebraic sections of $\mathcal{A}$ over $B$, both defined over $\R$. For simplicity, we assume that $\mathcal{A} \to B$ is (a base change of) the Legendre scheme, so that, on some small neighbourhood $U$ of a point $p_0 \in B(\R)$, we have a choice of real and imaginary period as in Section \ref{Periods}. For $p \in U$, let us denote by 
	$b_1(p)=(\tilde{\beta_1}(p),\beta_1(p))$ and $b_2=(\tilde{\beta_2}(p),\beta_2(p))$ the Betti maps of $\sigma_1(p)$ and $\sigma_2(p)$, where $\beta_1(p)$ and $\beta_2(p)$ are the real Betti coordinates of $\sigma_1(p)$ and $\sigma_2(p)$.

	We have that the ratio\footnote{In this example the symbol $\d$ will always denote the differential on the real domain $U(\R)$, and not the differential on the complex domain $U(\C)$.} $\d \beta_1(p)/\d \beta_2(p), \ p \in U(\R)$ is nowhere locally constant. Indeed, if it were, we would have that $\beta_1 = c_0 \beta_2+ k_0, \ c_0,k_0 \in \R$ on $U(\R)$.

	Consider now the {\em analytic} section $\sigma := \sigma_1 - c_0\sigma_2-k_0$, defined on $U$. The Betti map of $\sigma(p)$ is $b(p):= b_1(p) - c_0 b_2(p) - k_0=(\tilde{\beta_1},\beta_1)- c_0 (\tilde{\beta_2},\beta_2)- (0,k_0) \in (\R/\Z)^2$. Note that, since $\tilde{\beta_1}(p), \tilde{\beta_2}(p)  \in \frac{1}{2}\Z$ for $p \in U(\R)$, the function $b(p)$ is constant on $U(\R)$. Since $U(\R)$ is a (real) variety of dimension $1$, by \cite[Proposition 1.1]{CMZ}, this would imply that, for $p \in U$, $b(p)=b(p_0) \in (\frac{1}{2}\Z)^2$. Hence $\sigma$ would be torsion of order $2$.
	
	In particular we would have that $2\sigma_1=2c_0\sigma_2+2k_0$ on $U$.
	Now some non-trivial monodromy arguments (see e.g. \cite{CZPonc}, Theorem 6.3.10) show that, in this case, $k_0, c_0 \in \Q$. Hence $\sigma_1$ and $\sigma_2$ would be linearly equivalent as algebraic sections.

	As a consequence, there exist points $p, q \in U(\R)$ such that $\d \beta_1/\d \beta_2(p) < \d \beta_1/\d \beta_2(q)$. We choose integers $N,M \neq 0$ such that $\d \beta_1/\d \beta_2(p) < N/M <\d \beta_1/\d \beta_2(q)$.
	
	If we define now $\tau := [M]\sigma_1-[N]\sigma_2$, 
	we see  that $\d \beta_\tau(p) = M \d \beta_{\sigma_1}(p)-N\d \beta_{\sigma_2}(p)<0$ and $\d \beta_\tau(q) = M \d \beta_{\sigma_1}(q)-N\d \beta_{\sigma_2}(q)>0$. Hence the sign of $\d \beta_{\tau}(p), p \in U(\R)$ is not constant on $U(\R)$ (as it attains different values on $p$ and $q$). In the example below we provide an explicit class of examples, of dynamic nature, that are instances of the phenomenon just described.


{\tt Counterexample on the billiard.}
	Let $C$ be an elliptical billiard.  We  choose a point $p_0 \in \mathcal{T}^o$ (we remind the reader that $\mathcal{T}^o$ denotes the interior of the billiard), not lying on the line connecting the two foci (i.e. the axis $y=0$). 
	
	We denote by $\L \to \P_1$ the Legendre elliptic scheme, we choose a point $\lambda_0 \in \P_1(\R)$ corresponding to an elliptical caustic (through the identification \ref{E.cr}), and we choose a neighborhood $U$ of $\lambda_0$, where we can make a choice of a real and an imaginary period as in Section \ref{Periods}.
	We denote by $\beta_2(\lambda):U \to \R/\Z$ a local branch of the real Betti map of the billiard section $B:U \to \L$ (note that the billiard section is not algebraic over $U$, but this does not represent an issue for the counterexample).
	
	We consider the base changed elliptic scheme $\L':= \L \times_{\P_1} C \to C$, where the map $\phi:C \to \P_1$ is the one that sends a point $c \in C$ to the caustic associated to the shot from $p_0$ directed towards $c$ (and the successive bounces). We denote by $I \subset \P_1(\R)$ the interval parametrizing elliptical caustics, and by $I'$ the inverse image $\phi^{-1}(I)\cap C(\R)$. I.e. $I'$ is the set of $c \in C(\R)$ such that the line $p_0c$ defines an elliptical caustic. Note that this set is the disjoint union of two intervals.
	
	The restriction $\phi|_{I'}:I'\to I$ is not monotone on each of the two connected components of $I'$. In fact, it has local extrema at the two points $c \in I' \subset C(\R)$ such that the line $p_0c$ is tangent to the ellipse confocal to $C$ passing through $p_0$.
	
	Hence, keeping in mind that the real Betti map $\beta_2(\lambda):I \to \R/\Z$ of the billiard section $B:\P_1 \to \L$ is monotone (as shown in the proof of Theorem \ref{T.esiste}), we see that the composition $\beta_2 \circ \phi:I' \to \R/\Z$, which is the Betti map associated to the billiard section on the base changed elliptic scheme $\L':= \L \times_{\P_1} C\to C$, is not monotone on each of the two connected components of $I'$.
	
	This provides the sought example of non-monotonicity of the Betti map of a billiard section. We leave to the interested reader the exercise of extending this counterexample by combining it with the previous one.

{\tt Counterexample via  analytic methods.}
	Choose a segment $I \subset B(\R)$. For the sake of exposition, let us assume for simplicity that $\mathcal{A} \to B$ is the Legendre scheme, and that $\lambda \neq 0,1,\infty$ on $I$. Let $\beta_2(\lambda): I \to \R$ be any real-analytic function whose derivative is of non-constant sign on $I$, and choose a complex (connected) neighbourhood $U$, containing $I$, such that $\beta_2(\lambda)$ extends to an analytic function $U \to \C$ (note that such an extension is always unique). We define an analytic section $\sigma$ of $\mathcal{A}$ on $U$ as the abelian exponential of $\beta_2(\lambda) \omega_2(\lambda)$ (where $\omega_2(\lambda)$ designates the real period). Note that, by construction, the Betti coordinates of $\sigma(\lambda), \lambda \in I$, are $(0,\beta_2(\lambda))$. 
	We can approximate the analytic section $\sigma$ with algebraic sections $\sigma_n$ of the Legendre scheme (we remind that an algebraic section is a rational section defined over a finite cover $B' \to B$), as one may easily prove using the Stone-Weierstrass theorem. Moreover, one may choose these sections to be real.
	
	It follows now that any section $\sigma_n$ sufficiently near to $\sigma$ will be such that $\d (\beta_2)_{\sigma_n}$ has non-constant sign on $I$, providing again a class of examples where the Betti map of the section is not monotone.
\end{example}

\medskip


\begin{thm}\label{ThmComplex}\cite[Theorem 3.2]{CDMZ}
	Let $\sigma$ be a non-torsion algebraic section of the {complex} space $\mathcal{A}(\C) \to B(\C)$, defined on a finite covering $B' \to B$. Then we have the following asymptotic:
	\begin{equation}
		\int_{B'(\C)} \d \beta_1 \wedge \d \beta_2 = \lim_{n \to \infty} \frac{A_n}{n^2},
	\end{equation}
	where $A_n := \{p \in B'(\C) \mid \sigma(p) \text{ is torsion of order dividing } n\}$.
\end{thm}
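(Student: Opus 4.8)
The plan is to reduce the statement to a counting problem for rational points on the graph of the Betti map and then invoke a counting result in the spirit of Pila--Wilkie together with the equidistribution of torsion points.

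\textbf{Step 1: Local reduction via the Betti map.} Since $\sigma$ is defined over $B'$, work directly on $B'(\C)$, which is a (complex) smooth curve, hence a real $2$-manifold. Fix a relatively compact open cover $\{U_j\}$ of $B'(\C)$ minus a small neighbourhood of the bad fibres, on each of which we have holomorphic periods $\omega_1(b),\omega_2(b)$ and a holomorphic elliptic logarithm $\tilde\sigma$ of $\sigma$, giving real-analytic Betti coordinates $(\beta_1,\beta_2):U_j\to\R^2$ (well-defined up to $\Z^2$ and monodromy). By definition $\sigma(b)$ is torsion of order dividing $n$ exactly when $(\beta_1(b),\beta_2(b))\in\frac1n\Z^2\pmod{\Z^2}$. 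So $A_n$ counts, on each chart, the points where $(\beta_1,\beta_2)$ hits a point of $\frac1n\Z^2$ in the fundamental square, and $A_n/n^2$ is a Riemann-sum-type approximation to $\int_{B'(\C)}|\d\beta_1\wedge\d\beta_2|$. The substance is to show the error is $o(n^2)$, uniformly, and that the ``signed'' area $\int\d\beta_1\wedge\d\beta_2$ equals the count (so that no cancellation occurs, i.e. the Betti map is generically a local diffeomorphism with well-defined orientation — in fact by Manin's theorem $\beta_\sigma$ is non-constant, and one knows the Jacobian of the Betti map vanishes only on a finite set, as recalled in Remark~\ref{R.densereal}(ii)).

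\textbf{Step 2: Contribution away from the critical locus.} On the open set where $\d\beta_1\wedge\d\beta_2\neq 0$, the Betti map is a local diffeomorphism. Cover this set by finitely many small coordinate boxes on which $(\beta_1,\beta_2)$ is a diffeomorphism onto its image; on each such box the number of preimages of $\frac1n\Z^2$-points is $n^2\cdot\mathrm{Area}(\beta(\text{box}))+O(n)$ by a standard lattice-point count in a region with rectifiable boundary. Summing over the finitely many boxes gives $n^2\int|\d\beta_1\wedge\d\beta_2|+O(n)$ from this region. The orientation issue is handled by noting the Betti map is holomorphic-data-driven and the critical set is finite, so on each connected component of the regular locus the sign of the Jacobian is constant; but since the question is about the (unsigned) integral $\int\d\beta_1\wedge\d\beta_2$ as written in the statement (which in the complex case is naturally oriented by the complex structure on $B'$), one should check this orientation matches, using that $\d\beta_1\wedge\d\beta_2$ is, up to a positive factor, the pullback of the Haar measure which is the unique translation-invariant one — hence positive.

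\textbf{Step 3: The critical and boundary contributions — the main obstacle.} The hard part is controlling $A_n$ near (i) the finitely many points where the Betti map is critical, and (ii) the bad fibres $s\in\{0,1,\infty,\dots\}$ of the elliptic scheme, where the periods degenerate (e.g. $\omega_1\sim\log$ near $\lambda=0$ as computed in \S\ref{Periods}) and the section may blow up. The clean way is the one used for Proposition~\ref{P.reali1}: invoke definability. By the Jones--Schmidt result (\cite{JS}), the Betti map $(\beta_1,\beta_2)$ extends to a bounded definable (in $\R_{\mathrm{an},\exp}$) function on $B'(\C)$, including across the punctures; $o$-minimality then gives a uniform bound on the number of cells and hence a uniform bound on the ``monotone/variation complexity'' of the map, so the boundary-layer count near the critical points and punctures is $O(n)$ (it is a codimension-$\ge 1$ phenomenon, contributing $O(n)$ not $O(n^2)$). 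Combining Steps 2 and 3 yields $A_n=n^2\int_{B'(\C)}\d\beta_1\wedge\d\beta_2+O(n)$, which is stronger than the claimed limit. (Alternatively, one cites directly \cite[Theorem 3.2]{CDMZ} as the excerpt does, since the statement is attributed there; the above is the self-contained route.) I expect the genuinely delicate point to be verifying that the definable extension across the degenerate fibres is compatible with the lattice-point counting — i.e. that no ``thick'' clustering of torsion values occurs near a cusp, which is exactly where the $p$-adic analogue fails — and this is precisely where $o$-minimality (rather than naive estimates on $\log$-growth of periods) does the work.
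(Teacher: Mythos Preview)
Your proposal is correct and follows essentially the same route as the paper: definability of the Betti map (Jones--Schmidt) plus $o$-minimal geometry plus lattice-point counting. The paper packages Steps~2--3 more cleanly: rather than separating the regular locus from the critical/boundary locus, it applies Hardt's trivialisation theorem once to obtain a single finite definable decomposition $B'(\C)=\bigsqcup_i D_i$ with $\beta|_{D_i}$ injective, and then invokes the Barroero--Widmer counting theorem \cite{BW} on each image $\beta(D_i)$ to get $A_n(\beta(D_i))/n^2\to\lambda(\beta(D_i))$ directly. This absorbs your Step~3 entirely --- the bad fibres and critical points are just part of the cell decomposition, no separate estimate needed --- and your ``standard lattice-point count in a region with rectifiable boundary'' is exactly what Barroero--Widmer supplies uniformly for definable sets. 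Your orientation discussion in Step~2 is slightly tangled; the paper sidesteps it since Barroero--Widmer gives Lebesgue measure of the images, and the identification with $\int\d\beta_1\wedge\d\beta_2$ (rather than $\int|\d\beta_1\wedge\d\beta_2|$) indeed uses the finiteness of the critical locus, as you note and as the paper records in Remark~\ref{R.densereal}(ii).
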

\begin{proof}
	We refer to \cite{CDMZ} for a complete proof, and just hint at the main idea here. It starts with the following two facts about definability. 
	
	\underline{First fact}. Let $C \subset \R^2$ be any bounded definable set (again by this we mean definable in $\R_{an,exp}$, we refer to \cite[p. 16]{tametopology} for the notion of definability, but the reader may just think of $C$ as a finite union of closed compact sets of the form $f_1(x) \leq y \leq f_2(x), x \in [a,b]$, where $f_1$ and $f_2$ are piece-wise analytic functions). Then we have by a theorem of Barroero and Widmer \cite[Theorem 1.3]{BW} that the number $$A_n(C):= \{p \in C \mid p \text{ has rational coordinates with denominator dividing } n\}$$ satisfies $\lim_{n \to \infty} A_n(C)/n^2 = \lambda(C)$, where $\lambda$ denotes the Lebesgue measure. 
	
	\underline{Second fact}.  We remind the reader that the Betti map $\beta_{\sigma}$ is definable.
	Using Hardt's theorem \cite[Theorem 9.1.2]{tametopology}, one can show that there exists a finite decomposition of $B'(\C)= \sqcup_i D_i$ in definable sets $D_i \subset B'(\C)$, such that, on each set $D_i$, the function $\beta := (\beta_1,\beta_2)$ is injective. One then shows that $A_n=\sum_i A_n(\beta(D_i))$.
	
	 The result can be deduced from the two facts above.
\end{proof}

\begin{rem}\label{R.maninb} {\tt Algorithms for checking if a section is torsion}. 
	In Proposition \ref{P.reali1} an essential assumption was that $\sigma $ was not torsion. There are effective algorithms which allow to check such facts, for general sections (provided everything is defined over a `computable' field). For instance  one may appeal  (i) to  results about Galois theory of torsion sections (as in work going back to Fricke and Weber): the Galois group becomes large for large torsion order so one can bound the possible order. (ii)  on good reduction: torsion sections are defined over fields unramified outside the bad reduction. For instance by \eqref{E.masser}  the minimal  field of definition  for the billiard section is ramified above $\lambda=1/c^2$ which is of good reduction for $c^2\neq 0,1$. (iii) Height theory: the height of torsion sections is bounded.  
	(iv) A further algorithm to check whether a section is torsion is due to Manin. It is very practical, though it works only over function fields, and moreover if the answer is `yes' it does not give the torsion order. This algorithm requires merely computing the Gauss-Legendre operator on the elliptic logarithm of the section. As proved by Manin in general, this yields  always an algebraic function, given  explicitly in \cite{Man}, however in a form which needs a small correction, carried out in (6.65) of \cite{CDMZ}.  This algebraic function is a differential expression  in terms of the coordinates of the section and is additive.  It vanishes if and only if the section is torsion, which provides the algorithm.

	In the case of the Billiard section   this function is   $2c(1-c^2)^{1/2}(1-c^2\lambda)^{-3/2}$. This gives another proof that this section is non torsion, but can be useful for other purposes. For instance it shows that the Betti coordinate $\beta_2(\lambda)$ is a product of functions satisfying  a differential equation of Fuchsian type. 
\end{rem}

\bigskip

\bigskip\bigskip\bigskip\bigskip\bigskip\bigskip

\bigskip

\bigskip

\noindent Pietro Corvaja \\ Dipartimento di Scienze Matematiche, Informatiche e Fisiche \\ Universit\`a di Udine \\ Via delle Scienze, 206 \\ 33100 Udine - Italy
\\{\tt pietro.corvaja@uniud.it}

\bigskip

\noindent Umberto Zannier \\ Scuola Normale Superiore \\ Piazza dei Cavaleri, 7 \\ 56100 Pisa - Italy \\ {\tt u.zannier@sns.it}

\bigskip

\noindent Julian Lawrence Demeio \\ Scuola Normale Superiore \\ Piazza dei Cavaleri, 7 \\ 56100 Pisa - Italy \\ {\tt julian.demeio@sns.it}


\begin{thebibliography}{[Bo--Gi--SO]}

 

\smallskip

\bibitem{ACZ}   Y. Andr\'e, P. Corvaja, U. Zannier, The Betti map associated to a section of an abelian scheme, with an Appendix by Z. Gao, Inventiones Mathematicae, 222 no. 2 (2020), 161-202. 


\bibitem{B-C}   F. Barroero, L. Capuano, Unlikely intersections in families of abelian varieties and the polynomial Pell
equation Proc. London Math. Soc. (3) 120 (2020) 192-219.

\smallskip

\bibitem{BW} F. Barroero, M. Widmer, Counting lattice points and o-minimal structures.  Int. Math. Res. Not. IMRN (2014), no. 18, 4932-4957.

\smallskip

\bibitem{BM}   W. Barth, J. Michel, Modular curves and Poncelet polygons, Math. Ann. {\bf 295} (1993), 25--49.

\smallskip

\bibitem{BGT}  J. Bell, D. Ghioca, T. Tucker, The dynamical Mordell-Lang conjecture. Mathematical Surveys and Monographs, 210. American Mathematical Society, Providence, RI, 2016. xiii+280 pp.

\smallskip

 
 
\bibitem{Bin}  G. Binyamini, Point Counting for Foliations over Number Fields, preprint 2020.

\smallskip

\bibitem{Bi}   G. Birkhoff, On the periodic motions of dynamical systems. Acta Math. 50 (1927), 359--379

\smallskip

\bibitem{B-G}   E. Bombieri, W. Gubler, Heights in Diophantine Geometry, Cambridge University Press, 2006.

\smallskip

\bibitem{BMZ} E. Bombieri, D. Masser, U. Zannier, Intersecting a curve with algebraic subgroups of multiplicative groups, Internat. Math. Res. Notices 20 (1999), 1119-1140.

\smallskip

\bibitem{BHMZ}  E. Bombieri, Ph. Habegger, D. Masser, U. Zannier, A note on Maurin's theorem, Rend. Mat. Acc. Lincei 21 (2010), 251-260.

\smallskip

\bibitem{B-P}  E. Bombieri, J. Pila, On the number of rational points on arcs and ovals,  Duke Math. J. 59 (1989), no. 2, 337-357.\smallskip


\bibitem{B}   R.J. Boscovich, Sectionun Conicarum Elementa, (1757), Venice.

\smallskip


\bibitem{CalSm} K. Calta, J. Smillie, Algebraically periodic translation surfaces, Journal Modern Dynamics, Vol. 2 (2008), 209-248.

\smallskip

\bibitem{CantDu} S. Cantat,  R. Dujardin,  Finite orbits for large groups of automorphisms of projective surfaces, preprint 2020, available on-line at
{\tt https://arxiv.org/abs/2012.01762} \smallskip

\bibitem{ConnesZagier}   A. Connes, D. Zagier, A Property of Parallelograms Inscribed in Ellipses, American Math. Monthly 114 No. 10 (Dec. 2007), 909-914.
\smallskip

 
\bibitem{CMZ}   P. Corvaja, D. Masser, U. Zannier, Torsion Hypersurfaces on Abelian Schemes and Betti Coordinates, Math. Annalen, 371 (2018), 1013-1045

\smallskip

\bibitem{CDMZ}   P. Corvaja, J. Demeio, D. Masser, U. Zannier, On the torsion values for sections of an elliptic scheme, preprint 2019.

\smallskip


 


\smallskip

\bibitem{CZPonc}  P. Corvaja, U. Zannier, Poncelet games and elliptic schemes, preprint 2021
\smallskip

\bibitem{Cr}  T. Crespo, Z. Hajto, Algebraic Groups and Differential Galois Theory,  GSM 112, Americam Math. Soc. 2011.

\smallskip

\bibitem{Sar}   G. Davidoff, P. Sarnak. A. Valette, Elementary Number Theory, Group Theory and Ramanujan Graphs, Cambridge Univ. Pres., 2003.\smallskip

\bibitem{DZ}   P. D\`ebes, U. Zannier, Hilbert's irreducibility theorem and $G$-functions, Math. Ann.,  {\bf 309} (1997), 481--503.\smallskip

\smallskip





\smallskip

\bibitem{DMM}   L. De Marco, N. Mavraki, Variation of canonical height and equidistribution, preprint 2017, {\tt https://arxiv.org/abs/1701.07947}

\smallskip

\bibitem{DR}   V. Dragovic, M. Radnovic, Poncelet Porisms and Beyond (Integrable Billiards, Hyperelliptic Jacobians and Pencils of Quadrics), Birkhauser, 2010 (367 pp.).

\smallskip

\bibitem{Du}   J. J. Duistermaat, Discrete Integrable Systems (QRT Maps and Elliptic Surfaces), Springer Verlag , 2010.

\smallskip

\bibitem{G}   D. Ghioca, The dynamical Mordell-Lang conjecture. CMS Notes 46 (2014), no. 3, 14--15.

\smallskip

\bibitem{GHT}  D. Ghioca, L.C. Hsia, T. Tucker, A variant of a theorem by Ailon-Rudnick for elliptic curves, Pacific J. Math.  295 (2018), 1-15.

\smallskip

\bibitem{GN}   D. Ghioca, K. Nguyen, A dynamical variant of the Pink-Zilber conjecture. Algebra Number Theory 12 (2018), no. 7, 1749--1771.

\smallskip

\bibitem{GH}   Ph. Griffiths, J. Harris, A Poncelet theorem in space, Commentarii Mathematici Helvetici,   52 no. 2 (1977), 145-160

\smallskip
	
\bibitem{JS} G. Jones and H. Schmidt, Pfaffian definitions of Weierstrass elliptic functions, preprint, {\tt https://arxiv.org/abs/1709.05224v3}
	
\smallskip



\smallskip

\bibitem{H}   N. Hitchin, Poncelet Polygons and the Painlev\'e Equations,  Geometry and analysis (Bombay, 1992), 151-185, Tata Inst. Fund. Res., Bombay, 1995. (Reviewer: Henrik Pedersen)

\smallskip

 
\bibitem{Hus}   D. Husem\"oller, Elliptic Curves, Second Ed., Springer-Verlag GTM 111, 2004.


\smallskip

\bibitem{Jak}    B. Jakob, Moduli of Poncelet polygons, J. reine angew. Math. {\bf 436} (1993), 33--44.

\smallskip

\bibitem{sor} V. Kaloshin , A. Sorrentino, Inverse problems and rigidity questions in billiard dynamics, preprint 2021.\smallskip

 
\bibitem{Kod} Kodaira, On compact analytic surfaces II,  Ann of Math. {\bf 77} n.3   (1963), 563-626.

\smallskip

\bibitem{KZ}   M. Kontsevich, D. Zagier, Periods, In Mathematics Unlimited |2001
and Beyond (B. Engquist and W. Schmid, eds.), Springer, Berlin-Heidelberg-
New York 2001, 771-808.

\smallskip

\bibitem{Lang} S. Lang, Elliptic functions, Addison Wesley, Reading Massachussets, 1973.

\smallskip


\bibitem{Lau}  B. Lawrence, A Density Result for Real Hyperelliptic Curves, C.R.A.S.P., 2016.

\smallskip


\bibitem{LZ}   B. Lawrence, U. Zannier,  On the $p$-adic distribution of torsion values for a section of an abelian scheme, Rend. Mat. Lincei 2 (2020).

\smallskip

 

\bibitem{Man}   Ju. I. Manin, Rational Points of Algebraic Curves over Function Fields, Izv. Akad. Nauk SSSR Ser. Mat., 27:6 (1963), 1395-1440.

\smallskip

 \smallskip

\bibitem{MZ}  D. Masser, U. Zannier, 
Torsion points on families of squares of elliptic curves.  
Math. Ann. 352 (2012), no. 2, 453-484. 

\smallskip

\bibitem{MZ2}   D. Masser, U. Zannier, Torsion points on families of products of elliptic curves.  
Adv. Math. 259 (2014), 116-133.

\smallskip


\smallskip


\smallskip

\bibitem{MZb}   D. Masser, U. Zannier, Bicyclotomic polynomials and impossible intersections, J. de Th. des Nombres de Bordeaux, {\bf 25} (2013), 635--659.

\smallskip

\bibitem{Maurin} G. Maurin, Courbes alg\'ebriques et \'equations multiplicatives, Math. Annalen, 341 (2008), no. 4 789-824.

\smallskip

\bibitem{MC}   C. McMullen, Billiards and Teichm\"uller Curves on Hilbert Modular Surfaces, J. Amer. Math. Soc., {\bf 16} (2003), 857-885.

\smallskip

\bibitem{MC2}   C. McMullen, Teichm\"uller curves in genus two: torsion divisors and ratios of sines. Invent. Math. 165 (2006), no. 3, 651--672. 

\smallskip

 \bibitem{RGK}    D. Reznick, R. Garcia, G. Koiller, Can the Elliptic Billiard Still Surprise Us ? Math. Intelligencer, {\bf 42} 6-17 (2020). 

 
\smallskip


\smallskip

\bibitem{SA}   P. Sarnak, S. Adams, Betti number of congruence subgroups (with an appendix by Zeev Rudnik), Israel J. Math. {\bf 88}  (1994) nos 1-3, 31-72. \smallskip
 
\bibitem{S}  A. Schinzel, Irreducibility of lacunary polynomials X, Acta Arith. {\bf 53} (1989), 47-97. \smallskip
 
\bibitem{Sb}   J-P. Serre, Distribution asymptotique des valeurs propres des endomorphismes de Frobenius [d'apr\`es Abel, Chebyshev, Robinson,...], S\'eminaire BOURBAKI  1146 (2017--2018).



\smallskip


\smallskip

\bibitem{ellipticsurfaces} T.~{Shioda} and M.~{Schuett}, {Elliptic Surfaces}, {\em ArXiv e-prints}, July 2009.

\smallskip

\smallskip



   
\smallskip

\bibitem{SilEC} - J. Silverman, The Arithmetic of Elliptic Curves, Graduate Texts in Mathematics 106, Springer-Verlag, 1986.

\smallskip

\bibitem{SilEC2} - J. Silverman, Advanced Topics in the  Arithmetic of Elliptic Curves, Graduate Texts in Mathematics 151, Springer-Verlag, 1994.


\smallskip

\smallskip

\bibitem{Sin} - Ya. Sinai,  Hyperbolic billiards, Proc. ICM, Kyoto 1990, Math. Soc.
Japan, Tokyo, 1991, pp. 24--260.

\smallskip

 



\smallskip

 

\bibitem{T} - S. Tabachnikov, Geometry and Billiards, Penn State University, 1991.


\smallskip

\bibitem{tametopology} L.~Van~den Dries, {Tame Topology and O-minimal Structures}, 150 184. Cambridge University Press, 1998.


\smallskip


 
\bibitem{V} - W. A. Veech, Teichm\"uller curves in moduli space, Eisenstein series and an application to triangular billiards, Inv. Math. {\bf 97} (1989), 553--583.

\smallskip

\bibitem{Wald} - M. Waldschmidt et al. Eds., From Number Theory to Physics, Springer-Verlag, 1992.


\smallskip

 

\bibitem{W} - J. Wilson, Explicit moduli for curves of genus $2$ with real multiplication by $\Q(\sqrt 5)$, Acta Arith., {\bf 93} (2000), 121--138.

\smallskip


\bibitem{Z} - U. Zannier, Some problems of Unlikely Intersections in Arithmetic and Geometry, Annals of Mathematics Studies 181, Princeton U. Press 2012.

\smallskip


 

\bibitem{Z2} -  U. Zannier, Lecture Notes on Diophantine Analysis, Scuola Normale Superiore, 2008.

\smallskip
 
\bibitem{Z3} - U. Zannier, Unlikely Intersections and Pell's Equations in polynomials, Springer INdAM Series 8, V. Ancona and E. Strickland Eds., 2014. 



\smallskip

\end{thebibliography}
\end{document}